\documentclass[11pt]{amsart}

\usepackage{amsfonts, amssymb, amscd}

\usepackage{verbatim}
\usepackage{amssymb}
\usepackage{mathrsfs}
\usepackage{graphicx}
\usepackage{bm}
\usepackage[all]{xy}
\usepackage{tikz}
\usepackage{subcaption}
\usepackage{subfiles}
\usepackage[toc,page]{appendix}
\usepackage{mathtools}
\usepackage{comment}
\usepackage{enumerate}
\usepackage{enumitem}

\usepackage{graphicx}
\graphicspath{ {images/} }

\usepackage{hyperref}

\numberwithin{equation}{section}

\newcommand{\Zz}{\mathbb{Z}}
\newcommand{\Cc}{\mathbb{C}}
\newcommand{\Pp}{\mathbb{P}}
\newcommand{\Rr}{\mathbb{R}}
\newcommand{\Qq}{\mathbb{Q}}
\newcommand{\Nn}{\mathbb{Z}_{>0}}

\newcommand{\Weil}{\operatorname{Weil}}

\newcommand{\Supp}{\operatorname{Supp}}

\newcommand{\lct}{\operatorname{lct}}
\newcommand{\Exc}{\operatorname{Exc}}

\newcommand{\Int}{\operatorname{relint}}

\newcommand{\Dd}{\mathcal{D}}
\newcommand{\Ff}{\mathcal{F}}

\newcommand{\Ii}{\mathcal{I}}

\newcommand{\Tt}{\mathcal{T}}

\newcommand{\Ss}{\mathcal{S}}

\newcommand{\BB}{\mathfrak{B}}

\newtheorem{theorem}{Theorem}[section]
\newtheorem{lemma}[theorem]{Lemma}
\newtheorem{proposition}[theorem]{Proposition}
\newtheorem{definition}[theorem]{Definition}

\newtheorem{corollary}[theorem]{Corollary}
\newtheorem{remark}[theorem]{Remark}

 \usepackage{todonotes}




\begin{document}

\title[ACC for LCT-polytopes]{ACC for log canonical threshold polytopes}



\begin{abstract}
We show that the log canonical threshold polytopes of varieties with log canonical singularities satisfy the ascending chain condition.
\end{abstract}

\author{Jingjun Han,Zhan Li, Lu Qi}

\address{Department of Mathematics, Johns Hopkins University, Baltimore, MD 21218, USA}
\email{jhan@math.jhu.edu}

\address{Beijing International Center for Mathematical Research, Peking University,
Beijing 100871, China} 
\email{hanjingjun@pku.edu.cn}

\address{Department of Mathematics, Southern University of Science and Technology, 1088 Xueyuan Rd, Shenzhen 518055, China} \email{lizhan@sustech.edu.cn}

	\address{
		Department of Mathematics, Princeton University,
		Princeton, NJ 08544, USA
	}
	\email{luq@princeton.edu}

\address{Beijing International Center for Mathematical Research, Peking University,
Beijing 100871, China} \email{lqipku@pku.edu.cn}

\maketitle

\pagestyle{myheadings}\markboth{\hfill  J.Han, Z.Li and L.Qi \hfill}{\hfill ACC for LCT-polytopes\hfill}

\tableofcontents

\section{Introduction}\label{sec: introduction}

Let $(X, \Delta)$ be a log canonical pair over $\Cc$, and let $D \geq 0$ be an $\Rr$-Cartier divisor on $X$. The log canonical threshold of $D$ with respect to $(X, \Delta)$ is defined by
\[
\lct(X, \Delta; D)\coloneqq\sup\{t \in \Rr \mid (X, \Delta + tD) \text{~is log canonical}\}.
\]
It can be viewed as a measurement for the complexity of the singularities. A conjecture of Shokurov \cite{Sho88, Sho92} predicts that in any fixed dimension, if the coefficients of $\Delta$ and $D$ belong to a set which satisfies the descending chain condition (DCC), then the set of all log canonical thresholds satisfies the ascending chain condition (ACC). This conjecture is settled in \cite{dFEM10, dFEM11} for varieties with bounded singularities (in particular for smooth varieties) and in \cite{HMX14} without any extra restriction (see \cite{Ale94, Pro01,Pro02, MP04, dFM09}, etc. for partial results towards this conjecture). \cite{dFEM10, dFEM11} and \cite{HMX14} use different methods where the former is more algebraic and the latter is more geometric. The (non-standard) algebraic method in \cite{dFM09} encodes a sequence of ideals in a single object. Their method was reinterpreted in a more algebraic geometric way and was called generic limits in \cite{Kol08}. We recommend \cite{Kol08} for a nice survey of log canonical thresholds and generic limits.

\medskip

The ACC for log canonical thresholds plays an important role in the recent development of birational geometry. For example, it is closely related to the termination of flips \cite{Sho04, Bir07}, birational boundedness of varieties of general type \cite{HMX14}, the existence of complements \cite{Bir16a} and the Birkar-Borisov-Alexeev-Borisov Theorem \cite{Bir16a,Bir16b}.

\medskip

In \cite{LM11}, as an analogue of the log canonical threshold of a single divisor, Libgober-Musta\c{t}\u{a} introduced the log canonical threshold polytope (LCT-polytope) for multiple divisors. To be precise, let $(X,\Delta)$ be a log canonical pair, and let $D_1$, $\ldots$, $D_s\geq0$ be $\mathbb{R}$-Cartier divisors (we call them testing divisors). The LCT-polytope $P(X, \Delta;D_1,\ldots,D_s)$ of $D_1,\ldots,D_s$ with respect to $(X,\Delta)$ is defined by
\begin{align*}
P(X, \Delta; &D_1,\ldots,D_s)\coloneqq\\
&\{(t_1,\ldots,t_s) \in \mathbb{R}_{\geq 0}^s\mid (X,\Delta+t_1 D_1+\ldots+t_s D_s)\text{~is log canonical}\}.
\end{align*}

\medskip

The main goal of this paper is to prove a generalization of the ACC for log canonical thresholds to the setting of log canonical threshold polytopes. We show that in any fixed dimension, for a fixed number of testing divisors, if the coefficients of all divisors involved belong to a DCC set, then the set of LCT-polytopes satisfies the ACC under the inclusion of polytopes.

\medskip

\begin{theorem}[ACC for LCT-polytopes]\label{thm: ACC for LCT-polytopes}
Let $n,s \in \Nn$ be positive integers, and let $\Ii \subseteq \Rr_{\ge0}$ be a DCC set. Let $\Ss$ be the set of all $(X, \Delta; D_1,\ldots, D_s)$, where
\begin{enumerate}
\item $X$ is a normal variety of dimension $n$,
\item $(X, \Delta)$ is log canonical, 
\item the coefficients of $\Delta$ belong to $\Ii$, and
\item $D_1, \ldots, D_s$ are $\Rr$-Cartier divisors, and the coefficients of $D_1,\ldots,D_s$ belong to $\Ii$.
\end{enumerate}
Then $\{P(X, \Delta; D_1, \ldots, D_s) \mid (X, \Delta; D_1, \ldots, D_s)\in \Ss\}$ is an ACC set.
\end{theorem}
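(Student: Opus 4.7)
\emph{Proof plan.} My plan is to reduce the statement to the ordinary ACC for log canonical thresholds \cite{HMX14} by extracting, from an infinite strictly ascending chain of LCT-polytopes, a rational ``direction witness'' that would furnish an infinite strictly ascending chain of ordinary LCTs.

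First I would observe that since $\Ii \subseteq (0,+\infty)$ satisfies DCC, $\delta := \inf \Ii > 0$; otherwise an infinite strictly decreasing sequence in $\Ii$ approaching $0$ could be extracted, contradicting DCC. For any $D \geq 0$ with coefficients in $\Ii$ and any component $E$ of $D$, one has $\mathrm{mult}_E(D) \geq \delta$, so log canonicity of $(X, \Delta + tD)$ forces $t \leq 1/\delta$. Hence every polytope in $\Ss$ lies in the cube $[0, 1/\delta]^s$, and the family is uniformly bounded.

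Suppose for contradiction that $P_1 \subsetneq P_2 \subsetneq \cdots$ is an infinite strictly ascending chain with $P_i = P(X_i, \Delta_i; D_{i,1}, \ldots, D_{i,s})$. By Blaschke's selection theorem, after passing to a subsequence $P_i \to P_\infty$ in Hausdorff metric, where $P_\infty := \overline{\bigcup_i P_i}$ is a convex body properly containing each $P_i$. For a direction $\mathbf{v} \in \Qq_{>0}^s$ (which we may scale to lie in $\Nn^s$), set
\[
L_i(\mathbf{v}) := \sup\{t \geq 0 : t\mathbf{v} \in P_i\} = \lct\Bigl(X_i, \Delta_i; \sum\nolimits_{j} v_j D_{i,j}\Bigr).
\]
The divisor $\sum_j v_j D_{i,j}$ has coefficients in the DCC set $\sum_j v_j \Ii \subseteq (0,\infty)$, so by \cite{HMX14} the set $\{L_i(\mathbf{v}) : i \geq 1\}$ is ACC. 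Combined with monotonicity $L_i(\mathbf{v}) \leq L_{i+1}(\mathbf{v})$, this means $L_i(\mathbf{v})$ is eventually constant in $i$ for each fixed rational $\mathbf{v}$.

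The remaining, and technically most delicate, step is to upgrade this \emph{pointwise} eventual constancy to a \emph{uniform} one: to produce a single $N$ with $L_N(\mathbf{v}) = L_\infty(\mathbf{v})$ on a dense set of rational directions in the positive orthant, which would force $P_N = P_\infty$ by continuity of the gauge function on convex bodies and give the desired contradiction. Dini's theorem applied to the continuous monotone convergence $L_i \nearrow L_\infty$ on the compact sphere of directions gives uniform convergence, but by itself not a fixed $N$ of exact equality. The natural route is to establish a uniform bound, in terms of $n$, $s$, and $\Ii$, on the combinatorial complexity of the $P_i$ (for instance, on the number of distinct rational facet-normal directions that occur) via a log resolution together with a boundedness argument in the spirit of HMX, after which the uniform $N = \max_k N(\mathbf{v}^{(k)})$ over the finitely many facet-normal directions $\mathbf{v}^{(1)}, \ldots, \mathbf{v}^{(M)}$ suffices. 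Alternatively, one may apply the generic limit construction of \cite{dFM09, Kol08} to extract a limit pair realizing $P_\infty$ as its LCT-polytope and derive the contradiction directly from its structure; I expect the main effort to lie precisely in carrying out one of these uniformization strategies.
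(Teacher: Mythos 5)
Your proposal correctly establishes boundedness of the polytopes and correctly deduces, from \cite[Theorem 1.1]{HMX14}, that the chain stabilizes along every ray through the origin (this is exactly Lemma \ref{le: compactness of union} of the paper). But the step you yourself flag as ``the most delicate'' is precisely the content of the theorem, and neither of your two proposed routes closes it. The paper's introduction (Figure \ref{fig: 0}) exhibits the obstruction explicitly: one can have a strictly increasing sequence of polytopes that stabilizes along every line through the origin (and along every vertical and horizontal line), yet keeps growing near a common ``unstable'' vertex $\tau$. So pointwise stabilization on a dense set of rational directions, even combined with continuity of the gauge function and Dini-type uniform convergence, cannot force $P_N=P_\infty$ for any finite $N$; the failure is not a matter of bookkeeping but of geometry.

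Concerning your two uniformization strategies: (a) there is no uniform bound on the number of facet-normal directions of $P(X,\Delta;D_1,\dots,D_s)$ across all $n$-dimensional log canonical pairs with coefficients in $\Ii$ --- the facets come from the discrepancy functions $\sum_i a_{E,i}x_i + b_E$ of exceptional divisors over an \emph{unbounded} family of singularities, and no boundedness statement of this kind is available (if it were, the ACC for lct itself would be far easier); (b) generic limits are exactly the method of \cite{LM11} and \cite{dFM09}, which is known to work only for smooth varieties or varieties with bounded singularities, which is why \cite{HMX14} had to introduce a different, geometric method for the general ACC conjecture. The paper instead localizes at an unstable point $\beta$ (Lemma \ref{le: DCC implies unstable point}), runs a two-step argument projecting away the last coordinate and studying lines approaching $\beta$, and reduces to a new statement, the ACC for local LCT-polytopes (Theorem \ref{thm: ACC for Local LCT-polytopes}), which in turn rests on a Global ACC for divisors with \emph{linear-function} coefficients (Theorem \ref{thm: linear global ACC}). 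Proving that requires genuinely new machinery --- adjunction and dlt modifications for linear functional divisors, the MMP, BAB boundedness, and a two-ray game --- none of which appears in your outline. The gap is therefore not a technical refinement of your plan but the main body of the proof.
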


\cite{LM11} proved Theorem \ref{thm: ACC for LCT-polytopes} when $X$ is smooth. Their method relies on generic limits. When $s=1$, Theorem \ref{thm: ACC for LCT-polytopes} is the ACC for log canonical thresholds, which is the main result of \cite{HMX14}.

\medskip

Theorem \ref{thm: ACC for LCT-polytopes} gives more information for the complexity of the singularities. Indeed, as is pointed out by \cite{LM11}, even if one is only interested in singularities of one divisor $D_1$, studying the LCT-polytopes $P(X, \Delta; D_1, \ldots, D_s)$ for various testing divisors reveals more information. 

\begin{figure}[ht]
	\begin{tikzpicture}
	
	\draw [help lines,<->] (0,2.5)--(0,0)--(3.2,0);
	\draw [dashed](0,2.1)--(1,2);
	\draw[dashed] (2.6,1)--(2.9,0);
	\draw[dashed] (2.3,1.4)--(2.6,1);
	\draw [dashed](2.3,1.4)--(2,1.7);
	\draw [dashed] (1,0)--(1,2.8);

	\draw[help lines] (2.4,0)--(2.4,2.5);
	\draw[help lines] (1.9,0)--(1.9,2.5);
	\draw[help lines] (1.5,0)--(1.5,2.5);
	\draw[help lines] (1.2,0)--(1.2,2.5);
	\draw[help lines] (1.1,0)--(1.1,2.5);
	\draw[help lines] (1.05,0)--(1.05,2.5);
	\draw[->](3,1.25)--(2.2,1.25);
	\draw[->](3,1.55)--(2,1.55);
	\draw[->](3,1.85)--(1.6,1.85);

	\draw [thick](1,2)--(2.6,1);
	\draw [thick] (1,2)--(2.3,1.4);
	\draw [thick] (1,2)--(2,1.7);
	
	\node [left] at (0.1,-0.1) {\footnotesize $\bf 0$};
	
	\node [right] at (3,1.25) {\tiny$P_1$};
	\node [right] at (3,1.55){\tiny$P_2$};
	\node [right] at (3,1.85){\tiny$P_3$};
	
	\node [below] at (0.9,2.1) {\footnotesize$\tau$};
	\node [below] at (.9,0.05) {\footnotesize$\tau_0$};
	\end{tikzpicture}
	\caption{}
	\label{fig: 0}
\end{figure}

It is natural to try to apply the results of \cite{HMX14} to prove Theorem \ref{thm: ACC for LCT-polytopes}. Suppose that we consider two testing divisors, and the dimensions of LCT-polytopes are equal to 2. \cite[Theorem 1.1]{HMX14} implies that any sequence $\{P_i\}_{i\in\Nn}$ of such polytopes stabilizes along vertical lines, horizontal lines, and lines passing through the origin. However, it is possible to construct a strictly increasing sequence of polytopes $\{P_i\}_{i\in\Nn}$ with the above properties, which does not stabilize near a common vertex $\tau$ (see Figure \ref{fig: 0}). Thus, it seems that Theorem \ref{thm: ACC for LCT-polytopes} does not follow from \cite[Theorem 1.1]{HMX14} directly even if $s=2$. In order to rule out a configuration as Figure \ref{fig: 0}, we use a two-step procedure by looking at a sequence of vertical lines approaching the unstable vertex $\tau$, which is illustrated in Section \ref{sec: sketch}.

\medskip

The general case is more complicated. We reduce Theorem \ref{thm: ACC for LCT-polytopes} to a local statement, and prove the ``ACC for local LCT-polytopes''. Recall that for a set $\Tt$ of linear functions defined on an interval $I$, there is a natural partial order $\succeq$, that is, for $f,g\in\Tt$, $f\succeq g$ if and only if $f(t)\ge g(t)$ for any $t\in I$.

\begin{theorem}[ACC for local LCT-polytopes] \label{thm: ACC for Local LCT-polytopes}
	Let $n,s \in \Nn$ be positive integers, and let $\Ii \subseteq \Rr_{\ge0}$ be a DCC set. Let $b_1,\ldots,b_s\ge0$ be non-negative real numbers, and let $a_1,\ldots,a_s$ be real numbers. Suppose that $\lambda_0>0$ is a positive real number, such that $a_{j}\lambda_0+b_{j} \geq 0$ for any $1\leq j\leq s-1$. 
	Then there is an ACC set $\Tt$ of linear functions satisfying the following.
	
	If $(X,\Delta;D_1,\ldots, D_s)$ satisfies that
		\begin{enumerate}
		\item $X$ is a variety of dimension $n$,
		\item $(X,\Delta)$ is log canonical, 
		\item the coefficients of $\Delta$ belong to $\Ii$, 
		\item $D_1,\ldots, D_s$ are $\Rr$-Cartier $\Rr$-divisors, and the coefficients of $D_1,\ldots, D_s$ belong to $\Ii$, and
		\item there exist a positive real number $c\le\lambda_0$ and a linear function $t(\lambda)$ of $\lambda$ defined on $[0,+\infty)$, such that $t(0)=b_s$, and for any $\lambda\in [0,c]$,
		$$0 \leq t(\lambda)=\sup\{ t \mid ({X}, \Delta+\sum_{j=1}^{s-1}(a_{j}\lambda+b_j)D_{j}+tD_{s}) \text{~is log canonical}\},$$
	\end{enumerate}
	then $t(\lambda)\in \mathcal{T}$.
\end{theorem}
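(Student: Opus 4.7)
My plan is to prove the theorem by induction on the dimension $n$, combining the ACC for log canonical thresholds \cite{HMX14} for the intercept with a dlt-modification and adjunction argument for the slope.

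\textbf{Reformulation.} Each element $t\in\Tt$ is a linear function on $[0,+\infty)$, determined by the intercept $b_s := t(0)$ and the slope $m := t'(0)$. Unpacking the partial order $\succeq$: for two linear functions $f,g$ on $[0,+\infty)$, one has $f\succeq g$ iff $f(0)\geq g(0)$ \emph{and} the slope of $f$ is $\geq$ the slope of $g$ (if either inequality failed, monotonicity in $\lambda\to 0^+$ or $\lambda\to+\infty$ would produce a violation). Hence $\Tt$ being ACC is equivalent to both the set of intercepts and the set of slopes being ACC.

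\textbf{Intercepts.} The intercept $t(0)=b_s$ is precisely the lct of $D_s$ with respect to $(X,\Delta+\sum_{j<s}b_j D_j)$. Since $b_1,\ldots,b_{s-1}$ are fixed and $\Ii$ is DCC, the coefficients of $\Delta+\sum_{j<s}b_j D_j$ lie in the DCC set $\{\delta+\sum_{j<s}b_j\iota_j : \delta,\iota_j\in\Ii\cup\{0\}\}$, while those of $D_s$ lie in $\Ii$. Theorem~1.1 of \cite{HMX14} then gives ACC for these intercepts, so we may pass to a subsequence along which $b_{s,i}=b_s^*$ is a fixed constant.

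\textbf{Slopes via adjunction.} For each $i$, the linearity of $t_i$ on $[0,c_i]$ yields a divisorial valuation $E_i$ over $X_i$ whose log discrepancy with respect to $(X_i,\Delta_i+\sum_{j<s}(a_j\lambda+b_j)D_{j,i}+t_i(\lambda)D_{s,i})$ vanishes for every $\lambda\in[0,c_i]$. Differentiating this identity at $\lambda=0$ produces the key relation
\[
m_i\operatorname{ord}_{E_i}(D_{s,i})+\sum_{j<s}a_j\operatorname{ord}_{E_i}(D_{j,i})=0.
\]
Take a dlt modification $f_i\colon Y_i\to X_i$ of $(X_i,\Delta_i+\sum_{j<s}b_j D_{j,i}+b_s^* D_{s,i})$ extracting $E_i$, and apply adjunction to get an lc pair $(E_i,\Theta_i)$ of dimension $n-1$; the DCC-adjunction results of \cite{HMX14} confine the coefficients of $\Theta_i$ to a DCC set. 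By the boxed identity, the perturbation $\sum_{j<s}a_j\lambda\, D_{j,i}+m_i\lambda\, D_{s,i}$ pulls back with zero coefficient along $E_i$, hence restricts to an $\Rr$-Cartier family on $E_i$. This recasts the slope data as a local LCT-polytope problem on $(E_i,\Theta_i)$ in dimension $n-1$; the inductive hypothesis places the resulting linear functions in an ACC set, forcing $\{m_i\}$ to be ACC.

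\textbf{Main obstacle.} The technical heart is verifying that the restricted testing divisors $(f_i^* D_{j,i})|_{E_i}$ have coefficients in a uniform DCC set depending only on $n,s,\Ii$ and the fixed parameters $a_j,b_j,b_s^*$, so that the inductive hypothesis genuinely applies to a problem of the same shape as Theorem~\ref{thm: ACC for Local LCT-polytopes}. This requires an adjunction-with-parameters analysis tracking DCC structure through both the dlt modification and the restriction to $E_i$. A separate case distinction is also required when $\operatorname{ord}_{E_i}(D_{s,i})=0$: then the boxed identity becomes a relation among the $a_j$'s alone and $m_i$ is unconstrained by $E_i$, so one argues via an auxiliary reduction of $s$ that only finitely many such ``degenerate'' configurations arise.
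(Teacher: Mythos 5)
Your reduction of the problem to the slopes is fine (indeed, since the theorem fixes $t(0)=b_s$, the intercept step is redundant), and the ``boxed identity'' obtained by differentiating the vanishing of the log discrepancy of a common lc place $E_i$ is correct, with $\operatorname{ord}_{E_i}(D_{s,i})>0$ automatic for any place computing the threshold (so your ``degenerate case'' does not actually occur for the right choice of $E_i$). The gap is in the inductive step for the slopes. You propose to recast the restricted data on $E_i$ as a local LCT-polytope problem in dimension $n-1$ and invoke the inductive hypothesis, but the restricted problem is not of that shape: after adjunction, $m_i$ enters $\Theta_i(\lambda)$ only as a coefficient of an lc pair, and nothing identifies $m_i\lambda$ (or $t_i(\lambda)|_{E_i}$) as a log canonical threshold \emph{on} $E_i$, which is what the inductive statement constrains. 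Moreover the hypotheses of the inductive statement fail: the adjunction coefficients lie in $D(\Ff)$ for a set $\Ff$ of linear functions, and (as the paper notes after Lemma \ref{le: properties of derived set}) $\Dd(\Ff)$ evaluated at an endpoint need not be DCC; the coefficients of your restricted family $\bigl(\sum_{j<s}a_jf_i^*D_{j,i}+m_if_i^*D_{s,i}\bigr)|_{E_i}$ are built from arbitrary multiplicities $\operatorname{ord}(\cdot)$ and carry no DCC structure at all. Your ``main obstacle'' paragraph correctly names this difficulty but offers no mechanism to overcome it, and it is not a technical patch.

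What actually closes the argument in the paper is a different, global ingredient that your proposal never produces: after the common dlt modification one restricts $K_{Y_i}+F_i+\tilde\Delta_i(\lambda)=\phi_i^*(K_{X_i}+\Delta_i(\lambda))$ to a \emph{general fiber} $T_i$ of $E_i\to\phi_i(E_i)$, obtaining a \emph{numerically trivial} pair $K_{T_i}+\Theta_i(\lambda)\equiv 0$ in dimension $\le n-1$ whose coefficients are linear functions of $\lambda$ involving $t_i(\lambda)$. The finiteness of $t_i(\lambda)$ then comes from Theorem \hyperlink{thm: N}{N} (the Global ACC for numerically trivial pairs with linear functional coefficients, i.e.\ Theorem \ref{thm: linear global ACC}), which is itself proved by a separate induction using MMP, BAB, and the two ray game, precisely in order to handle the non-DCC coefficient sets $\Dd(\Ff)$. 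It is the numerical triviality on the general fiber, not an induction on the local statement, that rigidifies the slope; without this global theorem the local-to-local induction you sketch does not close.
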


Theorem \ref{thm: ACC for Local LCT-polytopes} follows from Theorem \ref{thm: linear global ACC}, the Global ACC for linear functional divisors. 
Divisors with linear function coefficients appears naturally in birational geometry. For example, \cite{Nak16,HLS19} considered that setting in the study of minimal log discrepancies.  

\medskip

Let $\Ff$ be a set of linear functions with real coefficients, and let $\Ff|_c\coloneqq \{f(c) \in \Rr \mid f(t) \in \Ff\}$ be the set of its values at $c\in \Rr$. Consider an $\Rr$-linear functional divisor $\Delta(t)= \sum f_i(t) D_i$, where $D_i$ are distinct prime divisors and $f_i(t) \in \Ff$. Denote this by $\Delta(t) \in \Ff$ for convenience. With the above conventions, we have the following result.

\begin{theorem}[Global ACC for linear functions]\label{thm: linear global ACC}
	Let $n \in \Nn$, and let $a<b$ be two real numbers. Let $\Ff$ be a set of real linear functions, such that for any $f(t)\in \Ff$, $f(t)\geq 0$ for $t\in[a,b]$, and $\Ff|_a\cup\Ff|_b$ is a DCC set. Then there exists a finite subset $\Ff'\subseteq \Ff$ satisfying the following. 
	
	If $(X,\Delta(t))$ is a log pair such that
	\begin{enumerate}
		\item $X$ is a normal projective variety of dimension $n$,
		\item $\Delta(t) \in \Ff$ is a linear functional divisor of $t$ on $[a,b]$,
		\item there exists $a<b_X \leq b$, such that $(X,\Delta(t))$ is log canonical for any $t\in[a, b_X]$, and
		\item $K_{X}+\Delta(t)$ is numerically trivial for any $t\in[a, b]$,
	\end{enumerate}
	then $\Delta(t) \in \Ff'$.
\end{theorem}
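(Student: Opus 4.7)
The plan is to argue by contradiction and reduce to HMX's Global ACC (\cite{HMX14}, Theorem 1.5), applied at two carefully chosen values of $t$. Suppose to the contrary that we have a sequence of pairs $(X_k,\Delta_k(t))$ satisfying hypotheses (1)--(4) with pairwise distinct tuples of coefficient functions $(f_{1,k},\ldots,f_{s,k})\in\Ff^s$. First, applying HMX's Global ACC to the pair $(X_k,\Delta_k(a))$, which is lc with $K_{X_k}+\Delta_k(a)\equiv 0$ and coefficients in the DCC set $\Ff|_a$, we pass to a subsequence and assume $f_{i,k}(a)=\alpha_i$ is constant in $k$. By the DCC of $\Ff|_b$, after a further subsequence $f_{i,k}(b)$ is non-decreasing in $k$, so the slopes $\beta_{i,k}:=(f_{i,k}(b)-\alpha_i)/(b-a)$ are non-decreasing and bounded below by $-\alpha_i/(b-a)$ (from $f_{i,k}\ge 0$). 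The crucial numerical observation is that since $K_{X_k}+\Delta_k(t)\equiv 0$ for every $t\in[a,b]$, the ``slope divisor'' $\Delta'_k:=\sum_i\beta_{i,k}D_{i,k}$ is numerically trivial on $X_k$.

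The argument splits according to the asymptotic behavior of the slopes. \emph{Case A}: all $(\beta_{i,k})_k$ are bounded above. Then, because $\Delta'_k$ has uniformly bounded coefficients, the log canonical threshold $c_k:=b_{X_k}-a$ of the one-parameter family $(X_k,\Delta_k(a)+c\Delta'_k)$ is bounded below by a uniform $c_0>0$. I then apply HMX's Global ACC at $t=a+c_0$: the pair $(X_k,\Delta_k(a+c_0))$ is lc with $K+\Delta\equiv 0$, and its coefficients $\alpha_i+c_0\beta_{i,k}$ are non-decreasing in $k$, hence form a DCC set. This forces $\alpha_i+c_0\beta_{i,k}$ into a finite set, so each $\beta_{i,k}$ stabilizes, contradicting distinctness of the $f_{i,k}$.

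\emph{Case B}: some $\beta_{i_0,k}\to+\infty$. Then the lc condition $\alpha_{i_0}+(t-a)\beta_{i_0,k}\le 1$ on $[a,b_{X_k}]$ forces $c_k:=b_{X_k}-a\to 0$ (and $\alpha_{i_0}<1$). I rescale by introducing $\gamma_{i,k}:=c_k\beta_{i,k}$; passing to a further subsequence, $\gamma_{i,k}\to\gamma_i\in[0,1-\alpha_i]$, where the lower bound follows from $\beta_{i,k}\ge-\alpha_i/(b-a)$ combined with $c_k\to 0$. The divisor $\Gamma_k:=c_k\Delta'_k$ is then numerically trivial with coefficients $\gamma_{i,k}$ converging to non-negative numbers $\gamma_i$, and $(X_k,\Delta_k(a)+\Gamma_k)=(X_k,\Delta_k(b_{X_k}))$ is lc with $K+\Delta\equiv 0$ and coefficients $\alpha_i+\gamma_{i,k}\in[0,1]$. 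The contradiction I want to extract is: in the limit, $\sum_i\gamma_i D_{i,\infty}$ is an effective numerically trivial divisor on a projective variety, hence zero, forcing every $\gamma_i=0$; but the definition of $c_k$ as an lc threshold means some constraint is tight, and in the coefficient-type case this yields $\gamma_{i^*}=1-\alpha_{i^*}>0$.

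The hard part, and the main obstacle, is making Case B rigorous. Passing to the limit requires a generic-limit construction that extracts $(X_\infty,\Delta_\infty)$ from the sequence $(X_k,\Delta_k(b_{X_k}))$ while preserving numerical triviality and effectivity of the limit of $\Gamma_k$; alternatively, one applies HMX's Global ACC directly to $(X_k,\Delta_k(b_{X_k}))$, but this demands a further subsequence argument to ensure that the coefficients $\alpha_i+\gamma_{i,k}$ lie in a DCC set --- which is subtle because $\gamma_{i,k}$ may converge to $\gamma_i$ from above (the product of a strictly decreasing $c_k$ and a bounded non-decreasing $\beta_{i,k}$ can be strictly decreasing). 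A further wrinkle is the possibility that the tight constraint at $c_k$ is realized by an exceptional divisor over $X_k$ rather than a coefficient of one of the $D_{i,k}$ reaching $1$, in which case identifying an index $i^*$ with $\gamma_{i^*}>0$ is less direct and may require the ACC for log canonical thresholds (\cite{HMX14}, Theorem 1.1) or an ACC-for-mld-type input.
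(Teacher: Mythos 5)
Your two-point-evaluation strategy is attractive but does not close, and the places where it fails are precisely where the paper has to do real work. The decisive gap is the claim in Case A that bounded slopes force a uniform lower bound $c_0>0$ on $c_k=b_{X_k}-a$. First, $b_{X_k}$ in condition (3) is arbitrary data, not a threshold, so ``$c_k$'' is not intrinsically defined; second, even after replacing it by $\sup\{c\mid (X_k,\Delta_k(a)+c\Delta'_k)\text{ is lc}\}$, that supremum is governed by the multiplicities of the $D_{i,k}$ along valuations over $X_k$ and by the singularities of $X_k$, none of which are controlled by the coefficients. A concrete counterexample: let $X_k$ have a point $q_k$ with minimal log discrepancy tending to $0$, and set $\Delta_k(t)=(1-(t-a))A_k+(t-a)A'_k$ with $A_k\equiv A'_k\equiv -K_{X_k}$, $A_k$ avoiding $q_k$ and $A'_k$ passing through it; all hypotheses hold, the slopes are $\pm 1$, but the pair stays lc only for $t-a=O(\operatorname{mld}(X_k;q_k))\to 0$. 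Without a uniform $c_0$ there is no common parameter value at which to invoke \cite[Theorem 1.5]{HMX14}, and the values $f_{i,k}(t_0)$ at a $k$-dependent point $t_0$ need not form a DCC set (the remark following Lemma \ref{le: properties of derived set} gives an explicit example where $\Dd(\Ff)|_b$ fails DCC). Case B you yourself flag as incomplete: the generic-limit/subsequence step is not carried out, and when the critical constraint at the threshold is realized by an exceptional lc place rather than by a coefficient reaching $1$, no index $i^*$ with $\gamma_{i^*}>0$ is produced. A further unaddressed point is that your ``for each $i$, pass to a subsequence so that $f_{i,k}(b)$ is non-decreasing'' presupposes a matching of components across $k$ and hence a bound on the number of components; components with $f_{i,k}(a)=0$ are invisible at $t=a$, and bounding their number is itself part of what must be proved.

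For comparison, the paper does not attempt to deduce the statement from Global ACC at finitely many parameter values. It proves a stronger inductive statement (Theorem N, for coefficients in the derived set $\Dd(\Ff)$, which is stable under adjunction by Lemma \ref{le: properties of derived set}) and runs an induction on dimension through common dlt modifications (Proposition \ref{prop: common dlt}), an MMP to a Mori fibre space, adjunction to a divisor meeting the relevant component, Birkar's boundedness theorem in the $\epsilon$-lc case (Proposition \ref{prop: bounded implies thm P}, where the unbounded-index phenomenon analogous to your Case B is killed by an intersection-number computation on a bounded family), and a two-ray game to manufacture lc centers when the discrepancies of the $X_k$ degenerate to $-1$ (Proposition \ref{prop:not bounded}). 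That last mechanism is exactly what is needed to handle the degenerating-singularity examples above, and your proposal has no substitute for it.
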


\begin{remark}	
If condition (3) in Theorem \ref{thm: linear global ACC} is replaced by ``there exists $a\leq a_X<b$, such that $(X,\Delta(t))$ is log canonical for any $t\in[a_X, b]$'', then Theorem \ref{thm: linear global ACC} still holds.
\end{remark}

If $\Ff\subset [0,1]$ is a set of constant functions, then the above theorem is the Global ACC Theorem, \cite[Theorem 1.5]{HMX14}. Our argument relies on \cite{HMX14}, so it does not give an independent proof of \cite[Theorem 1.5]{HMX14}.

\medskip

However, Theorem \ref{thm: ACC for Local LCT-polytopes} is not an ad hoc generalization of \cite[Theorem 1.5]{HMX14}. Indeed, there are cases where conditions of \cite[Theorem 1.5]{HMX14} are not satisfied but the coefficient set is still finite by the above result. For example, consider all the fixed dimensional log canonical pairs $(X, D_1)$ and $(X,(1-\frac{1}{m^2})D_1+\frac{1}{m}D_2)$, where $D_i$ are prime divisors, $m\in \Nn$, and $ K_X+D_1\equiv K_X+(1-\frac{1}{m^2})D_1+\frac{1}{m}D_2\equiv0$. We claim that there are only finitely many $m$ satisfying the above conditions. This does not follow from Global ACC since $\{\frac 1 m \mid m \in \Nn\}$ is not a DCC set. However, Theorem \ref{thm: linear global ACC} can be applied by considering the following data: $[a, b]=[0, 1]$, $b_X=\frac{1}{m}$, $\Ff=\{t\}\bigcup\cup_{m=1}^{+\infty}\{1-\frac{t}{m}\}$, and $\Delta(t)=(1-\frac{t}{m})D_1+t D_2$. Since $\Ff'$ is a finite set, $m$ belongs to a finite set.

\medskip

In the same fashion, we obtain an analogue of the ACC for the Fano spectrum (see \cite[Corollary 1.10]{HMX14}). 

\begin{corollary}\label{cor: Fano spectrum}
Let $n \in \Nn$, and let $a<b$ be two real numbers. Suppose that $\mathcal B\subseteq [a,b)$ is a DCC set. Let $\Ff$ be a set of real linear functions, such that for any $f(t)\in \Ff$, $f(t)\geq 0$ for $t\in [a,b]$, and $\Ff|_a\cup\Ff|_b$ is a DCC set. Let $\mathfrak{D}$ be the set of all $(X, \Delta(t),a_X)$, where
\begin{enumerate}

\item $X$ is a normal projective variety of dimension $n$,

\item $\Delta(t) \in \Ff$, $a_{X}\in \mathcal B$,

\item $(X,\Delta(t))$ is log canonical for $t\in[a_X, b]$,

\item $K_{X}+\Delta(b) \sim_\Rr 0$, and

\item $-(K_{X}+\Delta(a_{X}))$ is ample.

\end{enumerate}
Then the set 
\begin{align*}
\mathcal{R}\coloneqq
\{r \in \Rr \mid -(K_X+\Delta(a_X))\sim_{\Rr}rH &\text{ for some Cartier divisor } H,\\
&(X, \Delta(t),a_X)\in \mathfrak{D}\}
\end{align*}
is an ACC set.
\end{corollary}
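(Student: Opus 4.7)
My plan is to argue by contradiction and reduce to Theorem~\ref{thm: linear global ACC} by adjoining to $\Delta_i(t)$ a controlled term involving the ample Cartier divisor that realizes the Fano index. Suppose toward contradiction that there is an infinite strictly increasing sequence of Fano indices $r_1 < r_2 < \cdots$ realized by triples $(X_i, \Delta_i(t), a_{X_i}) \in \mathfrak{D}$. Since $\mathcal{B}$ is DCC, after passing to a subsequence I may assume $a_{X_i}$ is non-decreasing with limit $a_\infty \in [a, b]$; the main case to treat is $a_\infty < b$, and the degenerate case $a_\infty = b$ will follow after rescaling the parameter $t$ so as to normalize the interval length $b - a_{X_i}$.

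For each $i$, choose an ample Cartier divisor $H_i$ with $-(K_{X_i} + \Delta_i(a_{X_i})) \sim_{\Rr} r_i H_i$. Since $K_{X_i} + \Delta_i(t)$ is affine in $t$ with value $\sim_{\Rr} 0$ at $t = b$ and $\sim_{\Rr} -r_i H_i$ at $t = a_{X_i}$, one has $K_{X_i} + \Delta_i(t) \equiv -\tfrac{b-t}{b-a_{X_i}}\, r_i H_i$ on $[a, b]$. Pick $m_i \in \Nn$ so that $m_i H_i$ is very ample, and let $\Gamma_i$ be a general member of $|m_i H_i|$. By Bertini, $\Gamma_i$ is a prime divisor avoiding every non-klt center of $(X_i, \Delta_i(b))$. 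Define
\[
\bar{\Delta}_i(t) \;=\; \Delta_i(t) \;+\; \alpha_i(t)\,\Gamma_i, \qquad \alpha_i(t) \;=\; \frac{(b-t)\,r_i}{m_i\,(b - a_{X_i})}.
\]
Then $K_{X_i} + \bar{\Delta}_i(t) \equiv 0$ on $[a, b]$. Moreover, $\bar{\Delta}_i(b) = \Delta_i(b)$ is log canonical and $\alpha_i(t) \to 0$ as $t \to b$, so openness of the lc locus produces some $a_i' \in (a, b)$ with $(X_i, \bar{\Delta}_i(t))$ log canonical on $[a_i', b]$.

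I then apply Theorem~\ref{thm: linear global ACC} via the accompanying Remark, taking $a_X := a_i'$, to the enlarged set $\tilde{\Ff} := \Ff \cup \{\alpha_i(t) : i \geq 1\}$. Each $f \in \tilde{\Ff}$ is non-negative on $[a, b]$, and $\tilde{\Ff}|_b = \Ff|_b \cup \{0\}$ is DCC. The decisive point is to ensure that $\tilde{\Ff}|_a = \Ff|_a \cup \bigl\{\tfrac{(b-a)\, r_i}{m_i\,(b-a_{X_i})}\bigr\}$ is DCC as well: since $r_i$ is strictly increasing and $b - a_{X_i}$ is bounded below by $b - a_\infty > 0$, I select $m_i$ inductively to be large enough to make $m_i H_i$ very ample yet small enough that $\alpha_i(a)$ is strictly increasing in $i$. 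Once Theorem~\ref{thm: linear global ACC} applies, the family $\{\bar{\Delta}_i(t)\}$ takes only finitely many values, forcing $\{\alpha_i(a)\}$ to be finite, contradicting strict monotonicity.

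The main obstacle is this inductive choice of $m_i$: uniformly controlling the very-ampleness threshold of $H_i$ is not a formal consequence of Theorem~\ref{thm: linear global ACC}. I would invoke an effective Matsusaka-type bound together with boundedness of the intersection numbers of $H_i$ in our class of pairs (a consequence of boundedness results for lc Fano pairs in fixed dimension), or alternatively bypass the issue by producing a bounded effective $\Rr$-complement $\Gamma_i \sim_{\Rr} H_i$ so that one may take $m_i = 1$. Rigorously executing this step is where I expect most of the technical effort to go.
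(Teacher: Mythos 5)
Your overall strategy coincides with the paper's: perturb $\Delta_i(t)$ by $\alpha_i(t)$ times a general member of $|m_iH_i|$ to make $K_{X_i}+\bar\Delta_i(t)$ numerically (in fact $\Rr$-linearly) trivial, and then feed the resulting linear functional divisors into Theorem \ref{thm: linear global ACC}. The genuine gap is exactly the one you flag at the end: the existence of $m_i$ that is simultaneously large enough for (very) ampleness and controlled enough that $\{\alpha_i(a)\}$ is DCC. Neither of your proposed fixes works. Log canonical Fano pairs of fixed dimension do \emph{not} form a bounded family (Theorem \ref{thm: BAB} requires $\epsilon$-lc for a fixed $\epsilon>0$, and the whole difficulty in Proposition \ref{prop:not bounded} is precisely the unbounded case), so you cannot bound the intersection numbers of $H_i$ this way; and an effective Matsusaka bound needs exactly those numerical inputs. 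Producing bounded complements for lc pairs with $\Rr$-coefficients is likewise not an off-the-shelf statement here. Your fallback of choosing $m_i$ ``small enough that $\alpha_i(a)$ is strictly increasing'' has no justification, since very-ampleness only constrains $m_i$ from below and nothing prevents the minimal admissible $m_i$ from growing erratically relative to $r_i$.

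The paper closes this gap with two inputs you are missing. First, the cone theorem for lc pairs gives a curve $C_i$ with $-(K_{X_i}+\Delta_i(a_{X_i}))\cdot C_i\le 2n$, hence $r_i\le 2n$. Second, and decisively, Fujino's effective base point free theorem for log canonical pairs \cite[Theorem 1.1]{Fujino09} applies because $aH_i-(K_{X_i}+\Delta_i(a_{X_i}))\sim_\Rr (a+r_i)H_i$ is ample, and yields a \emph{universal} $m$ (depending only on $n$) such that $|mH_i|$ is base point free for every $i$; one then takes $m>2n$ so the added coefficient $\frac{r_i}{m}\cdot\frac{b-t}{b-a_{X_i}}\le\frac{r_i}{m}<1$ and a general $D_i\in|mH_i|$ keeps the pair lc on all of $[a_{X_i},b]$ (so the Remark after Theorem \ref{thm: linear global ACC} handles the interval, with no case split on $\lim a_{X_i}$). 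With $m$ universal, the DCC of the values at $t=a$ is automatic: $\frac{r_i}{m}\cdot\frac{b-a}{b-a_{X_i}}$ is a product of the increasing (hence DCC) sequence $\{r_i\}$ with the DCC set $\{\frac{b-a}{b-a_{X_i}}\}$ coming from $\mathcal B$ being DCC, so no inductive tuning is needed. Theorem \ref{thm: linear global ACC} then forces $\{\frac{r_i}{b-a_{X_i}}\}$ into a finite set, which together with the DCC of $\{\frac{1}{b-a_{X_i}}\}$ rules out $\{r_i\}$ being strictly increasing.
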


The complex singularity exponent of a holomorphic function $f$, which is the analytic counterpart for the log canonical threshold, is the largest real number $c$ such that $|f|^{-c}$ is $L^2$-integrable. It appears in the study of K\"{a}hler-Einstein metrics \cite{Tia90}, and has been further studied by many mathematicians (see \cite{DK01, GZ15}, etc.). Related works had already appeared in the early 50's (see \cite{Kol08} for references). The LCT-polytope of multiple holomorphic functions $f_1, \ldots, f_s$ can be defined as
\[
\{(c_1, \ldots, c_s) \in \Rr^s_{\geq 0} \mid \prod_{i=1}^s |f_i|^{-c_i} \text{ is } L^2 \text{-integrable}\}.
\] 
It is expected that such polytopes also contain more information of analytic singularities.

\medskip

This paper is organized as follows. Section \ref{sec: sketch} illustrates the main ideas in the proof of Theorem \ref{thm: ACC for LCT-polytopes} for two testing divisors in the surface case. Section \ref{sec: preliminaries} gathers relevant definitions and preliminary results. Theorem \ref{thm: linear global ACC} is proved in Section \ref{sec: proof}. The remaining main results are proved in Section \ref{sec: proof of thm and cor}. 

\medskip

\noindent\textbf{Acknowledgements}.
This work originates from the Birational Geometry Seminar at BICMR, Peking University. We thank the participants for the seminar, especially Yifei Chen and Evgeny Mayanskiy. We thank Chenyang Xu, the advisor of the first and the third author, for his constant support, valuable discussions and suggestions. We also thank Chen Jiang for answering many of our questions, and Mircea Musta{\c t}{\u a} and Vyacheslav Shokurov for their interests in the work. J. H. thanks
his advisor Gang Tian for constant support and encouragement. The authors sincerely thank referees for their long list of suggestions, comments, and corrections which improve
the paper considerably. A simplification of the argument in Proposition \ref{prop:not bounded} is due to their suggestions. This work is partially supported by NSFC Grant No.11601015 and the Postdoctoral Grant No.2016M591000.

\section{The main ideas in the proof}\label{sec: sketch}
 
In order to orient the reader, we give a sketch of the proof of the ACC for LCT-polytopes for surfaces with two testing divisors.

\medskip

Consider the set of triples $(X_i; D_{i,1}, D_{i,2})$ satisfying the assumptions of Theorem \ref{thm: ACC for LCT-polytopes} with $\dim X_i =2$. Denote $P_i \coloneqq P(X_i; D_{i,1}, D_{i,2})$, where
\begin{equation*}
P(X_i; D_{i,1}, D_{i,2})=\{(x_1,x_2)\in\Rr^2_{\geq 0}\mid(X_i,x_1D_{i,1}+x_2D_{i,2}){\rm~is~log~canonical}\}.
\end{equation*}
Suppose that $\dim P_i=2$ and $P_1\subsetneq P_2\subsetneq P_3\subsetneq\cdots$ is a strictly increasing sequence. 

\medskip

By Lemma \ref{le: DCC implies unstable point}, there exists a point $\beta\in P_i$ for all $i\gg 1$, such that for any open set $U\ni \beta$, the set $\{P_i\cap U\mid i\in \Zz_{>0}\}$ contains infinitely many elements. Possibly passing to a subsequence, we can show that for any $i$, there exists a facet (a line segment) $F_i\ni\beta$ of $P_i$ satisfying the following:
\begin{enumerate}
    \item if $H_i\subseteq\Rr^2$ is the hyperplane (hence a line) containing $F_i$, then $H_i\notin\{H_1,\ldots,H_{i-1}\}$,
    \item $H_i$ is not parallel to the $x_2$-axis, and 
   \item if $\pi:\Rr^2\to\Rr$ is the projection $(x_1,x_2)\mapsto x_1$, then there exists a ray $R$ with vertex $\pi(\beta)$, such that $R \subseteq \pi(H_i)$, and $\pi(F_i) \cap R \neq \emptyset$ for $i\gg 1$.
\end{enumerate}
Figure \ref{fig: 1} gives an illustration. For more details, see Section \ref{sec: proof of thm and cor}.

\begin{figure}[ht]
\begin{tikzpicture}

\draw [help lines,<->] (0,2.5)--(0,0)--(3.2,0);
\draw [dashed](0,2.1)--(1,2);
\draw[dashed] (2.6,1)--(2.9,0);
\draw[dashed] (2.3,1.4)--(2.6,1);
\draw [dashed](2.3,1.4)--(2,1.7);
\draw [dashed] (1,0)--(1,2.8);
\draw[->](1,0)--(3.2,0);
\draw[help lines] (2.4,0)--(2.4,2.5);
\draw[help lines] (1.9,0)--(1.9,2.5);
\draw[help lines] (1.5,0)--(1.5,2.5);
\draw[help lines] (1.2,0)--(1.2,2.5);
\draw[help lines] (1.1,0)--(1.1,2.5);
\draw[help lines] (1.05,0)--(1.05,2.5);
\draw[->](3,1.25)--(2.2,1.25);
\draw[->](3,1.55)--(2,1.55);
\draw[->](3,1.85)--(1.6,1.85);

\draw [thick](1,2)--(2.6,1);
\draw [thick] (1,2)--(2.3,1.4);
\draw [thick] (1,2)--(2,1.7);
\draw [->, thick] (1,0)--(3,0);

\node [left] at (0.1,-0.1) {\footnotesize $\bf 0$};

\node [right] at (3,1.25) {\tiny$F_1$};
\node [right] at (3,1.55){\tiny$F_2$};
\node [right] at (3,1.85){\tiny$F_3$};

\node[below] at (1.9,0) {\footnotesize$\lambda$};
\node [below] at (0.9,2.1) {\footnotesize$\beta$};
\node [below] at (.9,0.05){\footnotesize$\pi(\beta)$};
\node[below] at (3,0){\footnotesize$R$};
\node[right] at (3.2,0){\footnotesize$x_1$};
\node[above] at (0,2.4){\footnotesize$x_2$};

\end{tikzpicture}
\caption{}
\label{fig: 1}
\end{figure}

Let $a=\pi(\beta)$. If $x_2=t_i(x_1)$ is the equation of $H_i$, then $\{t_i(\lambda)\}_{i\in\Zz_{>0}}$ is strictly increasing for any $\lambda>a$. For each $i$, by definition, there exists $b_{X_i}>a$ such that 
$t_i(\lambda)=\sup\{x_2 \mid (\lambda,x_2)\in P_i\}=\sup\{x_2\ge0 \mid (X_i, \lambda D_{i,1}+ x_2D_{i,2}) {\rm~is~log~canonical}\}$ for any $\lambda\in[a,b_{X_i}]$. Let $b=b_{X_1}$, and $\Delta_i(\lambda) \coloneqq \lambda D_{i,1} + t_i(\lambda)D_{i,2}$. If some irreducible component $E_i$ of $D_{i,2}$ is a log canonical center of $(X_i,\Delta_i(\lambda))$ for infinitely many $i$, then for a fixed $\lambda\in(a,b)$, we have 
\[
\lambda{\rm mult}_{E_i}(D_{i,1})+t_i(\lambda){\rm mult}_{E_i}(D_{i,2})=1,
\]
with ${\rm mult}_{E_i}(D_{i,2})\neq 0$. Thus $t_i(\lambda)$ belongs to an ACC set, a contradiction.  

\medskip

Hence we may assume that for any $i$, the coefficient of any irreducible component of $D_{i,2}$ in $\Delta_i(\lambda)$ is less than 1. For fixed $i$, we can show that there exists a common dlt modification (Proposition \ref{prop: common dlt}) $\phi_i: Y_i \to X_i$ for any $\lambda\in (a,b_{X_i})$, such that
\begin{equation}\label{eq: surface case}
K_{Y_i}+E_i+\tilde\Delta_i(\lambda)=\phi_i^{*}(K_{X_i}+\Delta_i(\lambda)),
\end{equation} 
where $\tilde\Delta_i(\lambda)$ is the strict transform of $\Delta_i(\lambda)$. Here $E_i$ is a reduced exceptional divisor, and it intersects the strict transform of $D_{i,2}$. For simplicity, we assume that $E_i$ is a prime divisor.

\medskip

Restricting \eqref{eq: surface case} to $E_i$, we get
\begin{equation}\label{eq: num trivial}
K_{E_i} + \Theta_i(\lambda):=(K_{Y_i}+E_i+\tilde\Delta_i(\lambda))|_{E_i}\equiv0.
\end{equation} 

By the adjunction formula (see Proposition \ref{prop: adjunction}), the coefficients of $\Theta_i(\lambda)$ are of the form 
\begin{equation}\label{eq: m}
\frac{m-1+a_i\lambda +b_it_i(\lambda)}{m}
\end{equation}
where $m\in\Nn$ and $a_i,b_i\in\sum\Ii$ (see Lemma \ref{le: dcc} for the notation). For simplicity, we only consider the case $m=1$. Since $E_i$ is a smooth rational curve and $\deg K_{E_i}=-2$, \eqref{eq: num trivial} implies
\begin{equation*}\label{eq: compare coefficients}
\sum_{j=1}^{l_i} (a_i^j\lambda + b_i^jt_i(\lambda)) = 2
\end{equation*}
for some $l_i\in\Zz_{>0}$. Note that at least one $b_i^j\neq 0$ as $E_i$ intersects $D_{i,2}$. Since $a_i^j,b_i^j$ belong to the DCC set $\sum\Ii$, and $\{t_i(\lambda)\}_{i\in\Zz_{>0}}$ is strictly increasing for any fixed $\lambda>a$, $\{l_i\}$ is bounded. Passing to a subsequence, we may assume that $l_i=l$, and that both $\{a_i^j\}_{i\in\Nn}$ and $\{b_i^j\}_{i\in\Nn}$ are increasing for each $j$. Then
\begin{equation*}\label{eq: compare terms}
\sum_{j=1}^{l} (a^j_{1}\lambda +  b^j_{1}t_{1}(\lambda)) = 2 \text{~and~} \sum_{j=1}^{l} (a^j_{2}\lambda  + b^j_{2}t_{2}(\lambda)) = 2
\end{equation*} 
imply that $t_{1}(\lambda) \geq t_{2}(\lambda)$ for any $\lambda>a$, which is a contradiction since $\{t_i(\lambda)\}_{i \in \Nn}$ is strictly increasing.

\medskip

There are three major simplifications in the above argument which correspond to three hurdles to overcome in the general case:

\medskip

First, we assumed that there are only two testing divisors. Hence the ``bad'' configuration of $\{P_i\}$ is simple (see Figure \ref{fig: 1}). 
For multiple testing divisors, the configurations can be much more complicated, see Section \ref{sec: proof of thm and cor}. 

\medskip

Second, we assumed $m=1$ in \eqref{eq: m}. In general, the case where $m$ is unbounded needs to be ruled out, see Proposition \ref{prop: bounded implies thm P}. 

\medskip

Third, in the above, we only dealt with surfaces. In higher dimensional cases, one can still apply the adjunction formula to some common log canonical places and obtain numerically trivial pairs. However, the exceptional divisors $E_i$ may not belong to a bounded family. We use the minimal model program (MMP) to reduce to the Fano case. 
If all the $E_i$ are $\epsilon$-lc for some $\epsilon>0$, then by Birkar-Borisov-Alexeev-Borisov Theorem (see Theorem \ref{thm: BAB}), they belong to a bounded family, 
and a similar argument as above works. Otherwise, we play the two ray game to create log canonical centers, see Proposition \ref{prop:not bounded}.

\section{Preliminaries}\label{sec: preliminaries}

\subsection{Arithmetic of sets}

\begin{definition}[DCC and ACC set]\label{def: DCC and ACC}
	A partially ordered set $(\Ii,\succeq)$ satisfies the \emph{descending chain condition} (resp. \emph{ascending chain condition}), if for any descending sequence $a_1\succeq a_2\succeq\cdots$(resp. ascending sequence $a_1\preceq a_2\preceq\cdots$) in $\Ii$, there exists $n \in  \Nn$, such that $a_i=a_j$ for any $i,j\geq n$. For simplicity, we write DCC for descending chain condition and ACC for ascending chain condition.
\end{definition}

\begin{remark}
This paper mainly deals with three kinds of partially ordered sets. The set of polytopes is ordered by inclusion $\subseteq$. The set of real numbers $\Rr$ is ordered by the usual $\leq$. Besides, for a set $\Ff$ of real-valued functions defined on a closed interval $[a,b]$ where $a<b$, the partial order $\succeq$ is defined pointwisely, that is, for $f,g\in\Ff$, $f\succeq g$ if and only if $f(t)\geq g(t)$ for any $t\in[a,b]$. 
\end{remark}

The proof of the following result is elementary and we omit it.

\begin{lemma}\label{le: dcc}
	DCC sets have following three properties.
	\begin{enumerate}
		\item If $\Ii$ is a totally ordered DCC set, then there exists a unique minimal element of $\Ii$. Thus, for any sequence $\{a_i\}_{i \in \Nn}$ with $a_i\in \Ii$, there exists an increasing subsequence $\{a_{i_j}\}_{j\in \Nn}$.
		
		\item If $\Ii_i, i=1,\ldots,n$ are DCC subsets of a partially ordered set $\Ii$, then $\bigcup_{i=1}^n \Ii_i$ is a DCC set.
		
	    \item If $\Ii \subseteq  \Rr_{\ge 0}$ is a DCC set, then
\[
\sum \Ii\coloneqq\{\sum_{i_l \in \Ii} i_l \in \Rr \text{ a~finite~sum}\},
\]
is a DCC set.
	\end{enumerate}
\end{lemma}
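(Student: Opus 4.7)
The plan is to dispatch the three parts separately, with essentially all of the work concentrated in part (3); parts (1) and (2) are purely order-theoretic, while (3) uses (1) plus a pigeonhole-type bound on the number of summands.

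For part (1), I would first establish the auxiliary fact that a totally ordered DCC set has the property that every non-empty subset admits a minimum element: otherwise one could inductively choose, at each step, a strictly smaller element, producing an infinite strictly descending chain. Taking the minimum of $\Ii$ itself yields the first claim. For the second claim, given a sequence $\{a_i\}_{i\in\Nn}$ in $\Ii$, I would define indices $i_1 < i_2 < \cdots$ recursively by letting $i_{k+1}$ be the smallest index $j>i_k$ at which $\min\{a_j : j>i_k\}$ is attained. Because the minima are taken over nested tails, the values $a_{i_k}$ form a non-decreasing subsequence.

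Part (2) is the contrapositive observation that any infinite strictly descending chain in $\bigcup_{i=1}^n \Ii_i$ must, by pigeonhole, have infinitely many terms in some single $\Ii_{i_0}$; these terms constitute an infinite strictly descending chain in $\Ii_{i_0}$, contradicting its DCC property.

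For part (3), which carries the real content, I use the paper's running convention that $\Ii\subseteq(0,+\infty)$, so by part (1) there is a positive minimum $i_{\min}\coloneqq\min \Ii>0$. Suppose for contradiction that $\sum\Ii$ admits an infinite strictly descending sequence $s_1>s_2>\cdots$, and write each $s_k=\sum_{l=1}^{n_k} a_{k,l}$ with $a_{k,l}\in\Ii$. Since $s_k\ge n_k\cdot i_{\min}$ and $s_k\le s_1$, the summand counts $n_k$ are uniformly bounded by $s_1/i_{\min}$. After passing to a subsequence I may assume $n_k=n$ is constant. Then I apply part (1) iteratively to the coordinate sequences $\{a_{k,1}\}_k,\{a_{k,2}\}_k,\ldots,\{a_{k,n}\}_k$, extracting at each stage a further subsequence on which the current coordinate is non-decreasing. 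After $n$ such refinements I obtain a subsequence along which every coordinate $a_{k,l}$ is non-decreasing in $k$, so $s_k$ itself is non-decreasing; this contradicts the strict descent, which of course survives under passage to subsequences. The main (and only) subtlety is this last bookkeeping — verifying that finitely many successive subsequence extractions preserve strict descent — but this is immediate.
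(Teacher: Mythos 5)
Your proof is correct; the paper omits the proof of this lemma as elementary, and your argument for each part is the standard one the authors evidently have in mind (minimum of tails for (1), pigeonhole for (2), and for (3) a bound on the number of summands followed by iterated extraction of monotone coordinate subsequences). One point worth making explicit: your appeal to the ``running convention'' $\Ii\subseteq(0,+\infty)$ in part (3) is not merely a convenience but essential, since the statement as literally written for $\Ii\subseteq\Rr$ is false --- e.g.\ $\Ii=\{-1,1\}$ is DCC while $\sum\Ii\supseteq\Zz$ is not --- so you are right to read the positivity hypothesis into the lemma, and your bound $n_k\le s_1/i_{\min}$ is exactly where it is used.
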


\subsection{Log canonical thresholds and LCT-polytopes}\label{subsection: Log canonical threshold and LCT-polytopes}

For definitions of log canonical (lc), kawamata log terminal (klt) and divisorial log terminal (dlt) singularities and their basic property, see \cite[\S 2]{KM98}.

\medskip

In the sequel, a log pair $(X,\Delta)$ consists of a normal variety $X$ and an $\Rr$-divisor $\Delta \geq 0$ such that $K_X+\Delta$ is $\Rr$-Cartier. If the coefficients of $\Delta$ are at most 1, then $\Delta$ is called a boundary. 

\medskip

For a birational morphism $f: Y \to X$ with $Y$ a normal variety, write
\begin{equation*}
K_Y \sim_\Rr f^*(K_X + \Delta) + \sum_E a(E, X, \Delta) E,
\end{equation*} 
where the irreducible divisors $E$ run over the components of exceptional divisors $\Exc(f)$ and the strict transform $f_*^{-1}(\Delta)$ of $\Delta$. The real number $a(E, X, \Delta)$ is called the \emph{discrepancy} of $E$ with respect to $(X, \Delta)$, and it is independent on $Y$. For a real number $\epsilon \geq 0$, $(X, \Delta)$ is called \emph{$\epsilon$-lc} if the discrepancy $a(E, X, \Delta)\geq -1 + \epsilon$ for any divisor $E$ over $X$.

\begin{definition}[Lc place, lc center]
Let $(X,\Delta)$ be an lc pair. An \emph{lc place} of $(X,\Delta)$ is a prime divisor $E$ over $X$ such that $a(E,X,\Delta)=-1$. An \emph{lc center} is the image of an lc place on $X$.
\end{definition}

\begin{definition}[Log canonical threshold]
Let $(X,\Delta)$ be an lc pair and let $D$ be an effective $\mathbb{R}$-Cartier $\Rr$-divisor. The \emph{log canonical threshold} of $D$ with respect to $(X,\Delta)$ is
\begin{equation*}
\lct(X,\Delta;D)\coloneqq {\rm sup}\{t \in\mathbb{R}\mid (X,\Delta+tD) \text{~is log canonical}\}.
\end{equation*}
\end{definition}

For $I\subseteq [0,1]$, we write $\Delta \in I$ if the coefficients of the $\Rr$-divisor $\Delta=\sum c_i \Delta_i$ belong to $I$. Let
\begin{equation*}
\mathcal{T}_n(I)\coloneqq \{(X,\Delta)\mid (X, \Delta) {\rm~is~log~canonical}, \dim X=n, \Delta\in I\}.
\end{equation*}
Suppose $J\subseteq \Rr_{\geq 0}$, let
\begin{equation*}
{\rm LCT}_n(I,J)\coloneqq\{\mathrm{lct}(X,\Delta;M)\mid (X,\Delta)\in\mathcal{T}_n(I), M \in J\}
\end{equation*}
be a subset of real numbers. Under the above notation, we have the following important result on log canonical thresholds.

\begin{theorem}[ACC for log canonical thresholds, {\cite[Theorem 1.1]{HMX14}}]\label{thm: ACC for lct}
Fix a positive integer $n$, $I\subseteq [0,1]$ and a subset $J$ of the positive real numbers.

If $I$ and $J$ satisfy the DCC, then ${\rm LCT}_n(I,J)$ satisfies the ACC.
\end{theorem}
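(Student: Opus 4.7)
The plan is to argue by contradiction and mimic the strategy of \cite{HMX14}, with induction on dimension $n$ intertwined with a companion Global ACC statement for numerically trivial log pairs. Suppose ${\rm LCT}_n(I,J)$ fails ACC, so there is a strictly increasing sequence of log canonical thresholds $c_1<c_2<\cdots$ arising from pairs $(X_i,\Delta_i)\in\mathcal{T}_n(I)$ and $\Rr$-Cartier divisors $M_i\in J$. Replacing $\Delta_i$ by $\Delta_i+c_iM_i$, each pair is lc on the nose, the coefficients lie in the DCC set $I\cup(c_i\cdot J)$, and there exists an lc place witnessing the strict lc-ness.

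First I would extract a geometric witness. For each $i$, take a dlt modification $\phi_i\colon Y_i\to X_i$ that extracts a prime lc place $E_i$ of $(X_i,\Delta_i+c_iM_i)$, so that
\[
K_{Y_i}+E_i+\Gamma_i=\phi_i^{*}(K_{X_i}+\Delta_i+c_iM_i),
\]
and restrict by adjunction:
\[
K_{E_i}+\Theta_i\equiv (K_{Y_i}+E_i+\Gamma_i)\big|_{E_i}\equiv 0.
\]
The coefficients of $\Theta_i$ are of the shape $\tfrac{m-1+a_i+b_ic_i}{m}$ with $a_i,b_i\in\sum I\cup\sum J$ and $m\in\Nn$ (Proposition \ref{prop: adjunction}), so they belong to a DCC set parametrized linearly by $c_i$.

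Next I would reduce $(E_i,\Theta_i)$ to a bounded family on which a Global ACC statement can be applied. If all $E_i$ are uniformly $\epsilon$-lc, the Birkar--Borisov--Alexeev--Borisov theorem (Theorem \ref{thm: BAB}) immediately places them in a bounded family, so $(E_i,\Theta_i)$ becomes a sequence of $(n-1)$-dimensional log Calabi--Yau pairs with DCC coefficients varying linearly in $c_i$; Theorem \ref{thm: linear global ACC} (applied inductively in dimension) then forces the coefficient sequence to terminate, contradicting $c_i\nearrow c$ strictly. If the $E_i$ are not uniformly $\epsilon$-lc, I would run a suitable MMP on $Y_i$, or a two-ray game as in Proposition \ref{prop:not bounded}, to create a new, deeper lc center; iterating this drops dimension until one lands in a bounded Fano-type situation where Global ACC does apply.

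The main obstacle will be the case in which the extracted lc centers $E_i$ have discrepancies approaching $-1$ from above, making the pairs $(E_i,\Theta_i)$ fail to be $\epsilon$-lc uniformly; here boundedness of $E_i$ is not automatic, and one must iterate the adjunction/MMP construction while tracking the linear dependence of the coefficients on $c_i$ carefully enough that the eventual application of Global ACC still yields incompatible decompositions for distinct $c_i$. This boundedness/finiteness bootstrap, together with the dimension induction entangling ACC for lct with Global ACC, is the technical heart of the argument and is exactly what \cite{HMX14} carry out.
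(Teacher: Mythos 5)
This statement is not proved in the paper at all: it is quoted as \cite[Theorem 1.1]{HMX14} and used as a black box (e.g.\ in Lemma \ref{le: compactness of union} and in the proof of Theorem \ref{thm: ACC for LCT-polytopes}). So there is no internal argument to compare yours against; the only honest ``proof'' here is the citation. Your write-up correctly identifies the known strategy of \cite{HMX14} --- extract an lc place by a dlt modification, restrict by adjunction, and reduce to a global ACC statement for numerically trivial pairs, with a boundedness/BAB dichotomy --- but it is a road map rather than a proof, and you concede as much by deferring ``the technical heart'' back to \cite{HMX14}.

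Two concrete issues if you did want to flesh this out. First, after adjunction you write $K_{E_i}+\Theta_i\equiv 0$, but $\phi_i^{*}(K_{X_i}+\Delta_i+c_iM_i)|_{E_i}$ is only numerically trivial \emph{over} $\phi_i(E_i)$ ($X_i$ is not projective and the pair is not globally trivial); one must restrict to a general fiber of $E_i\to\phi_i(E_i)$ to get a projective numerically trivial pair of lower dimension (as the paper does in the proof of Theorem \ref{thm: ACC for Local LCT-polytopes}), and the degenerate case where the lc place is a component of $\Delta_i+c_iM_i$ itself (so the fiber is a point) must be handled separately via the coefficient equation $a_i+c_ib_i=1$. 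Second, invoking Theorem \ref{thm: linear global ACC} is circular as an independent proof: in this paper that theorem is established using \cite[Theorem 1.5]{HMX14}, which in \cite{HMX14} is proved by an induction intertwined with the very ACC for log canonical thresholds you are trying to prove. Within the paper's logic (treating \cite{HMX14} as given) your derivation is not needed, and as a self-contained argument it does not close.
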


Log canonical threshold can be generalized to the case of multiple divisors (called \emph{testing divisors}) and we get LCT-polytopes (see \cite{LM11}).

\begin{definition}[LCT-polytope]\label{def: LCT-polytope}
Let $(X,\Delta)$ be an lc pair. Let $D_1, \ldots, D_s>0$ be $\mathbb{R}$-Cartier $\mathbb{R}$-divisors. The \emph{LCT-polytope} $P(X, \Delta;D_1,\ldots,D_s)$ of $D_1,\-\ldots,D_s$ with respect to $(X,\Delta)$ is the set
\begin{equation*}
\{(t_1,\ldots,t_s) \in \mathbb{R}_{\geq 0}^s \mid (X,\Delta+t_1 D_1+\ldots+t_s D_s)\text{~is log canonical}\}.
\end{equation*}
\end{definition}

Since the boundaries of an LCT-polytope are defined by linear equations whose coefficients are determined by a log resolution of $(X, \Delta+\sum_{i} D_i)$, an LCT-polytope is indeed a bounded polytope in $\Rr^s_{\geq 0}$ (see \cite[Lemma 2.3]{LM11} or Lemma \ref{lem: LCTfacet}). 

\medskip

Let $P\subseteq \Rr^s$ be an LCT-polytope of dimension $s$, a facet (that is, a face of dimension $s-1$) $F$ of $P$ is called an \emph{LCT facet} if $F\nsubseteq\cup_{i=1}^{s}\{(x_1,\ldots,x_s) \mid x_i=0\}$.

\begin{lemma}\label{lem: LCTfacet}
	Let $P=P(X,\Delta; D_{1},\ldots,D_{s}) \subseteq{\Rr^s}$ be an LCT-polytope of dimension $s$. Suppose that $F$ is an LCT facet of $P$, and $H$ is the hyperplane containing $F$. There exist $a_1,\ldots,a_s\ge0$ such that the connected component of $\Rr^s \backslash H$ which contains $P$ is 
	$$\{(x_1,\ldots,x_s)\mid\sum_{i=1}^s a_ix_i\le a_0\}.$$
	In particular, $P$ is a bounded convex polytope.
\end{lemma}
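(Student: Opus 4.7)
The plan is to describe $P$ explicitly as an intersection of half-spaces coming from a log resolution, and then read off the form of the defining inequalities.

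First, I would choose a log resolution $f: Y \to X$ of $(X,\Delta+\sum_{i=1}^s D_i)$, and for every prime divisor $E$ on $Y$ write $a_E^{(i)} \coloneqq \operatorname{mult}_E(f^*D_i)$ and $a_E^{(0)} \coloneqq 1 + a(E,X,\Delta)$. By the standard discrepancy computation, the pair $(X,\Delta+\sum t_i D_i)$ is lc if and only if for every prime divisor $E$ on $Y$,
$$\sum_{i=1}^s a_E^{(i)}\, t_i \;\le\; a_E^{(0)},$$
so $P$ is cut out from $\Rr^s_{\ge 0}$ by these finitely many inequalities (one for each $E$ appearing in $\Exc(f)$ or in $\operatorname{Supp} f_*^{-1}(\Delta+\sum D_i)$). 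Crucially, $a_E^{(i)}\ge 0$ because $D_i\ge 0$, and $a_E^{(0)}\ge 0$ because $(X,\Delta)$ is lc. Together with the $s$ coordinate inequalities $t_i\ge 0$, this gives $P$ as a bounded polytope, reproving the representation in \cite[Lemma 2.3]{LM11}.

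Next, since $F$ is a facet of the $s$-dimensional polytope $P$, the hyperplane $H$ must coincide with the boundary of one of the non-redundant half-spaces in this description, i.e., either $H=\{x_i=0\}$ for some $i$, or
$$H \;=\; \Bigl\{(x_1,\ldots,x_s)\;\Big|\;\sum_{i=1}^s a_E^{(i)} x_i = a_E^{(0)}\Bigr\}$$
for some prime divisor $E$ on $Y$. The hypothesis that $F$ is an LCT facet rules out the first possibility, so we are necessarily in the second case. Setting $a_i \coloneqq a_E^{(i)}$ for $i=1,\ldots,s$ and $a_0 \coloneqq a_E^{(0)}$, all these numbers are non-negative by the previous paragraph.

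Finally, I would note that the half-space $\{\sum a_i x_i \le a_0\}$ contains all of $P$ by the very defining inequality indexed by $E$; since $P$ is $s$-dimensional and $P\not\subseteq H$, the interior of $P$ meets the open half-space $\{\sum a_i x_i<a_0\}$, which is one connected component of $\Rr^s\setminus H$. Therefore the connected component of $\Rr^s\setminus H$ containing (the relative interior of) $P$ is exactly $\{\sum a_i x_i < a_0\}$, whose closure is the asserted half-space. There is no real obstacle here beyond bookkeeping; the only thing to keep track of is that the LCT-facet hypothesis is precisely what excludes the ``trivial'' coordinate half-spaces, so that the facet must come from a genuine discrepancy inequality with non-negative data.
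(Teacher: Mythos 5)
Your proof is correct and follows essentially the same route as the paper: both pass to a log resolution of $(X,\Supp(\Delta+\sum D_i))$, express $P$ as the intersection of $\Rr^s_{\ge0}$ with half-spaces $\{\sum_i a_{E,i}x_i\le a_{E,0}\}$ whose linear coefficients are non-negative because the $D_i$ are effective, and use the LCT-facet hypothesis to exclude the coordinate hyperplanes, with $a_{E,0}\ge 0$ coming from the log canonicity of $(X,\Delta)$ (equivalently, $\bm{0}_s\in P$).
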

\begin{proof}
    Let $\Delta=\sum b_{0,j}B_j$, and $D_i=\sum b_{i,j}B_j$, where $B_j$ are distinct prime divisors. Let $f:Y\to X$ be a log resolution of $(X,\Supp(\Delta+\sum_{i=1}^s D_i))$, then
	$$K_Y+\tilde{\Delta}+\sum_{i=1}^sx_i\tilde{D_i}+\sum_E (\sum_{i=1}^s a_{E,i}x_i+b_{E})E=f^{*}(K_X+\Delta+\sum_{i=1}^sx_iD_i),$$
	where $\tilde{\Delta}, \tilde{D_i}$ are strict transforms of $\Delta, D_i$ on $Y$, $E$ runs over all exceptional divisors, and $a_{i,E}\ge0$. Thus 
	\begin{align*}
	P=&\Rr_{\geq 0}^s \bigcap (\cap_E\{(x_1,\ldots,x_s)\mid\sum_{i=1}^s a_{E,i}x_i+b_{E} \leq 1\})\\
	&\cap_{j} \{(x_1,\ldots,x_s)\mid\sum_{i=1}^s b_{i,j}x_i+b_{0,j}\leq 1\}. 
	\end{align*}
	
    The connected component of $\Rr^s \backslash H$ which contains $P$ is  
	$$\{(x_1,\ldots,x_s)\mid\sum_{i=1}^s a_ix_i+b\le 1\},$$
	where either $a_i=a_{E,i}\ge 0$, $1 \leq i \leq s$ and $b=b_E$ for some $E$ or $a_i=b_{i,j}\ge0 $, $1 \leq i \leq s$ and $b=b_{0,j}$ for some $j$. Since $\bm{0}_s:=(0,0,\ldots,0)\in P$, $a_0\coloneqq 1-b\ge \sum_{i=1}^s a_i\cdot 0=0$.	
\end{proof}

Note that the interior of $P(X, \Delta;D_1,\ldots,D_s)$ does not correspond to the log canonical thresholds of $\{D_i\}$. However, we abuse the language and call the whole polytope the LCT-polytope.

\begin{lemma}\label{le: compactness of union}
Let $\{P_i\coloneqq P(X_i,\Delta_i;D_{i,1},\ldots,D_{i,s})\}$ be an increasing sequence of LCT-polytopes of dimension $s$. 
\begin{enumerate}
	\item If along any ray $R$ starting from the origin, this sequence of polytopes stabilizes (that is, $\{R \cap P_i\}$ stabilizes), then the union $\cup_{i\geq 1} P_i$ is a closed set.
	\item If $X_i$ $(i \in \Nn)$ are of the same dimension and the coefficients of $\Delta_i$ and $D_{i,j}$ $(1 \leq j \leq s)$ belong to a DCC set $\Ii$, then the union $\cup_{i\geq 1} P_i$ is a compact set.
\end{enumerate}
\end{lemma}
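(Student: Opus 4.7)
The plan is to prove the two parts in order: (1) handling the general closedness statement, then (2) deriving compactness from (1) plus ACC for log canonical thresholds.

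For part (1), I would first show that $P := \bigcup_i P_i$ is bounded. Each $P_i$ is downward-closed in $\Rr^s_{\geq 0}$: if $(X_i, \Delta_i + \sum x_j D_{i,j})$ is log canonical and $0 \leq x'_j \leq x_j$ for each $j$, then $(X_i, \Delta_i + \sum x'_j D_{i,j})$ remains log canonical, since subtracting an effective divisor cannot worsen the singularities. In particular, for any $x = (x_1,\ldots,x_s) \in P_i$, the coordinate-axis point $x_j e_j$ lies in $P_i$, so $x_j \leq \rho_i(e_j)$ where $\rho_i(e_j)$ is the endpoint of $P_i \cap \Rr_{\geq 0}e_j$. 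By ray-stability along each axis, $\rho_i(e_j)$ stabilizes to some $M_j < \infty$, so $P_i \subseteq \prod_j [0, M_j]$ and $P$ is bounded. For closedness, $P$ is convex as an increasing union of convex sets and contains the origin. Setting $\rho(v) := \sup_i \rho_i(v)$ for $v$ a unit vector in the positive orthant, ray-stability yields $\rho(v) = \rho_{N(v)}(v)$ for some finite $N(v)$, so $\rho(v)v \in P_{N(v)} \subseteq P$. For $q \in \overline P$ with $q \neq 0$ and $v = q/|q|$, the goal reduces to showing $|q| \leq \rho(v)$, after which $q$ lies on the segment $[0, \rho(v) v] \subseteq P_{N(v)}$ by star-shapedness. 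Given an approximating sequence $q_n \to q$ with $q_n \in P$, I would replace $q_n$ by the coordinate-wise minimum $q_n \wedge q$, which remains in $P$ by downward-closedness and still converges to $q$, and then use convexity together with the polytope structure of the $P_i$ to preclude $|q| > \rho(v)$.

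For part (2), compactness follows from part (1) once ray-stability is verified. For any fixed direction $v = (v_1, \ldots, v_s)$ with $v_j \geq 0$, the testing divisor $D_v := \sum_j v_j D_{i,j}$ has coefficients drawn from $\bigcup_j v_j \cdot \Ii$, which is a DCC set $J_v$ determined by $v$ and $\Ii$ by Lemma \ref{le: dcc}. By Theorem \ref{thm: ACC for lct}, the values $\rho_i(v) = \lct(X_i, \Delta_i; D_v)$ lie in the ACC set $\mathrm{LCT}_n(\Ii, J_v)$. Being monotone increasing in $i$ within an ACC set, the sequence stabilizes, furnishing the ray-stability hypothesis of (1). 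Then closedness plus boundedness from (1) give compactness.

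The principal technical obstacle is the closedness step in (1), which amounts to upper semi-continuity of the radial function $\rho$ at each direction: a priori, a pointwise sup of an increasing sequence of continuous radial functions $\rho_i$ is only lower semi-continuous, so the attainment of the sup at a finite $N(v)$ and the downward-closedness of $P$ must be combined to rule out the scenario in which $\overline P \cap R_v$ strictly exceeds $P \cap R_v$ for some ray $R_v$ from the origin.
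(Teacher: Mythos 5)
Your part (2) is correct and coincides with the paper's argument: for a direction $v=(v_1,\ldots,v_s)$ the radial value $\rho_i(v)$ is $\lct(X_i,\Delta_i;\sum_j v_jD_{i,j})$, and ACC for log canonical thresholds forces the non-decreasing sequence $\{\rho_i(v)\}_i$ to stabilize. One small correction: when the $D_{i,j}$ share prime components, the coefficients of $\sum_j v_jD_{i,j}$ lie in the set of finite sums $\sum_j v_j\Ii$ rather than in $\bigcup_j v_j\Ii$; this is still a DCC set by Lemma \ref{le: dcc}, so the conclusion is unaffected. Your boundedness argument via the coordinate axes is also fine (the paper instead bounds $P_k$ by $[0,1/c]^s$ with $c=\min\Ii$).

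The problem is in part (1): you assemble the right ingredients (downward-closedness of each $P_i$ in $\Rr_{\geq 0}^s$ and the replacement of $q_n$ by $q_n\wedge q$) but stop at the step that \emph{is} the proof, deferring it to ``convexity together with the polytope structure'' and then explicitly listing it as the unresolved obstacle. Convexity is not what closes the argument; a one-line scaling argument does, and it is exactly what the paper writes. Suppose $q=|q|v\in\overline{P}$ with $|q|>\rho(v)$. If $\rho(v)=0$ the contradiction is immediate, so assume $\rho(v)>0$ and set $r'=\frac{|q|+\rho(v)}{2\rho(v)}>1$. For $n\gg 0$ the point $p_n:=q_n\wedge q\in P_{k_n}$ satisfies $(p_n)_i>r'\rho(v)v_i$ for every $i$ with $v_i>0$ (since $(p_n)_i\to q_i=|q|v_i>r'\rho(v)v_i$), while $(p_n)_i\geq 0=r'\rho(v)v_i$ when $v_i=0$. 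Hence $r'\rho(v)v\leq p_n$ coordinate-wise, so $r'\rho(v)v\in P_{k_n}$ by downward-closedness, contradicting the stabilization of $\{P_i\cap R_v\}$ at $\rho(v)v$. With this inserted, your proof of (1) is complete and is in substance the paper's proof, which phrases the same computation by exhibiting the open box $\{y\mid y_i>\rho(v)v_i \text{ for all } i\}$ as a neighborhood of the given point disjoint from every $P_k$.
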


\begin{proof}
 For (1), let $(x_1,\ldots,x_s)\in \Rr_{\geq 0}^s $ be any point such that $(x_1,\ldots,x_s) \notin \cup_{i\geq 1} P_i$. Let $R$ be the ray from the origin $\bm{0}_s$ to $(x_1,\ldots,x_s)$. If $(rx_1,\ldots,rx_s)$ is the stable point along $R$, then $r<1$. Consider the open set
 \begin{equation*}
 T=\{(y_1,\ldots,y_s)\in \Rr^s \mid y_i>rx_i,i=1,2,\ldots,s\}.
 \end{equation*}
 Then $(x_1,\ldots,x_s) \in T$. We claim that $T \cap P_k = \emptyset$ for all $k$. Suppose to the contrary that there exists $(a_1, \ldots, a_s) \in T \cap P_k$ for some $k$. Let $r' = \min\{\frac{a_i}{rx_i}\}$. Then $a_i\ge r'rx_i$ for any $i$, and $(r'rx_1, \ldots, r'rx_s) \in P_k$. Since $a_i > rx_i$ for any $i$, $r'>1$. This contradicts the stability of $(rx_1,\ldots,rx_s)$. Thus, $(x_1,\ldots,x_s)$ does not belong to the closure of $\cup_{i\geq 1} P_i$.

\medskip

 For (2), if $c>0$ is the minimal nonzero element of $\Ii$, then $P_k\subseteq [0, 1/c]^s$ is bounded. Let $R \coloneqq \{(\alpha_1 t, \ldots, \alpha_s t) \mid t \in \Rr_{\geq 0}\}$ be a ray starting from the origin, where $\alpha_i\ge0$ and $(\alpha_1,\ldots,\alpha_s)\neq \bm{0}_s$. Let $(\alpha_1 t_k, \ldots, \alpha_s t_k)$ be the intersection point of $R$ with an LCT facet of $P_k$, then we have
\begin{equation*}
t_k = \lct(X_k, \Delta_k; \alpha_1 D_{k,1}+ \ldots+ \alpha_s D_{k,s}).
\end{equation*}
 The coefficients of $\alpha_1 D_{k,1}+ \ldots+ \alpha_s D_{k,s}$ belong to $\sum_{i=1}^s \alpha_i \Ii$, which is a DCC set by Lemma \ref{le: dcc}. By Theorem \ref{thm: ACC for lct}, $\{P_i\}$ stabilizes along $R$. Thus $\cup_{i\geq 1} P_i$ is closed by (1).
\end{proof}

\begin{definition}
	Let $X$ be a normal variety, and let $\Delta_i$ be distinct prime divisors. If $f_i(t):[a,b]\to\Rr$ is an $\Rr$-linear function (resp. piecewise $\Rr$-linear function), then we call the formal finite sum $\Delta(t)=\sum_{i} f_i(t)\Delta_i$ an \emph{$\Rr$-linear functional divisor} (resp. \emph{piecewise $\Rr$-linear functional divisor}) of $t$ on $[a,b]$. We also call it a linear functional divisor (resp. piecewise linear functional divisor) of $t$ on $[a,b]$ for simplicity.
	
	The support of $\Delta(t)$ is defined by $\Supp\Delta(t)\coloneqq\Supp \sum_{\{i\mid f_i(t)\neq 0\}}\Delta_i$. 
	\end{definition}

\begin{lemma}\label{le: linearity of lct}
	Let $a<b$ be two real numbers. Let $X$ be a normal variety, $\Delta(t)$ a piecewise linear functional divisor of $t$ on $[a,b]$, and $D > 0$ an $\Rr$-Cartier $\Rr$-divisor. Suppose that $(X,\Delta(t))$ is lc for $t\in[a,b]$. Then 
	$$\zeta(t)\coloneqq\sup\{\tau\in\Rr\mid(X, \Delta(t)+\tau D)\text{ is log canonical}\}$$ is a non-negative piecewise linear function of $t$ on $[a,b]$.
	
	Moreover, if $\Delta(t)$ is a linear functional divisor and $\zeta(t)$ is a linear function of $t$ on $[a,a+\epsilon_0]$ for some $0<\epsilon_0< b-a$, then an lc place of $(X,\Delta(t)+\zeta(t)D)$ for some $t\in(a,a+\epsilon_0)$ is an lc place for any $t\in[a,a+\epsilon_0]$.
\end{lemma}

\begin{proof}
	We first show that $\zeta(t)$ is a non-negative piecewise function. It suffices to show the case when $\Delta(t)$ is a linear functional divisor, and apply the conclusion to a subdivision of the interval $[a,b]$. 
		
	\medskip
	
    Take a log resolution $\pi:W\to X$ of $(X,\Supp\Delta(t)+\Supp D)$ and denote the set of exceptional divisors by $\{E_k\}$. We have
	\begin{equation*}\label{eqn:double linear resolution}
		K_W+\tilde\Delta(t)+\tau\tilde D=\pi^*(K_X+\Delta(t)+\tau D)+
	\sum_k a_k(t,\tau)E_k,
	\end{equation*}
	where $\tilde\Delta(t)$ and $\tilde D$ are the strict transforms of $\Delta(t)$ and $D$ respectively, and $a_k(t,\tau)\coloneqq a(E_k,X,\Delta(t)+\tau D)$ is a bilinear function of $t$ and $\tau$. By assumption, $a_k(t,0)\geq -1$ for any $t\in[a,b]$.
	
	\medskip
	
	For $t\in[a,b]$ and $k\ge1$, define $\tau_k(t)\coloneqq\sup\{\tau\mid a_k(t,\tau)\geq -1\}$. Then either $\tau_k(t) \equiv +\infty$ or $\tau_k(t)$ is a non-negative linear function of $t$ on $[a,b]$. By definition, $\zeta(t)=\min_k\{\tau_k(t)\}$ is a piecewise linear function of $t$ on $[a,b]$.
		 
	 \medskip

	 Suppose that $\Delta(t)$ is a linear functional divisor and $\zeta(t)$ is a linear function of $t$ on $[a,a+\epsilon_0]$, then $a_k(t,\zeta(t))$ is a linear function of $t$ on $[a,a+\epsilon_0]$ for each $k$. Thus if $a_k(t,\zeta(t))=-1$ for some $t\in(a,a+\epsilon_0)$, then $a_k(t,\zeta(t))=-1$ for any $t\in[a,a+\epsilon_0]$. \end{proof}

\subsection{Adjunction}
For a dlt pair $(X, \Delta=S+B)$ where $S$ is a prime divisor, by the adjunction formula, there exists a dlt pair $(S,{\rm Diff}_S(B))$, such that $(K_X+\Delta)|_{S} = K_S + {\rm Diff}_S(B)$. Moreover, if $B= \sum_{i=1}^s b_i B_i$, then the coefficients of ${\rm Diff}_S(B)$ are of the form $\frac{m-1+\sum_{i=1}^s r_ib_{i}}{m}\leq1$, where $m$ is a positive integer, and $r_i$ are non-negative integers. In this paper, we need an adjunction formula for linear functional divisors.

\begin{definition}\label{def: coefficient set}
Let $a<b$ be two real numbers, and let $\mathcal{F}$ be a set of non-negative linear functions defined on $[a, b]$. For any $a<c\le b$, define
\begin{equation*}
(\mathcal{F}_{+},[a,c])\coloneqq\{v(t) \mid v(t)=\sum_{f_i(t)\in \mathcal{F}}n_if_i(t) \leq 1 \text{~for~}t\in[a,c], n_i\in \Nn\}\bigcup\{0\},
\end{equation*}
where the sum $\sum$ stands for a finite sum, and
\begin{equation*}
\begin{split}
D(\mathcal{F},[a, c])\coloneqq\{w(t) \mid &w(t)=\frac{m-1+v(t)}{m} \leq 1 \text{~for~}t\in[a,c], \\
&m\in\Zz_{> 0},v(t)\in(\mathcal{F}_{+}, [a,c])\}.
\end{split}
\end{equation*}

In addition, define
\begin{equation*}
\Dd(\Ff, [a,b])\coloneqq\bigcup_{b\ge c>a}D(\Ff, [a, c]).
\end{equation*}
If $D_0\subseteq \Dd(\Ff)$, then define a subset of $\Ff$ by
\begin{equation*}
\begin{aligned}
\Dd^{-1}(\Ff,D_0)\coloneqq \{f(t) \in \Ff \mid &\frac{m-1+nf(t)+\sum n_jf_j(t)}{m} \in D_0,\\
& m,n,n_j\in\Zz_{>0}, f_j(t)\in\Ff\}.
\end{aligned}
\end{equation*}
\end{definition}
\begin{remark}
When $c=b$ and the interval $[a,b]$ is clear from the context, we write $\mathcal{F}_{+}$ and $D(\Ff)$ for $(\mathcal{F}_{+},[a, b])$ and $D(\mathcal{F},[a, b])$  respectively.

In the proof of Theorem \ref{thm: linear global ACC}, we apply Lemma \ref{le: linearity of lct} and shrink the interval $[a,b_X]$. This is the reason to introduce $\Dd(\Ff)$. \end{remark}

\begin{proposition}[Adjunction formula for linear functional divisors]\label{prop: adjunction}Let $a<b$ be two real numbers. Let $X$ be a normal variety, $S$ a prime divisor on $X$, and $\Gamma(t)=\sum_{i=1}^s b_i(t)B_i$ a linear functional divisor of $t$ on $[a,b]$. Suppose that $(X, \Gamma(t)+S)$ is dlt for $t\in[a,b]$. Then there is a linear functional divisor $\Theta(t)$ on $S$, such that
\begin{equation*}
(K_X+S+\Gamma(t))|_{S}=K_{S}+\Theta(t) \text{~for~} t\in[a,b],
\end{equation*}
$(S, \Theta(t))$ is dlt for $t\in[a,b]$, and the coefficients of $\Theta(t)$ belong to
\begin{equation*}
D(\{b_1(t),\ldots,b_m(t)\},[a,b]).
\end{equation*}
\end{proposition}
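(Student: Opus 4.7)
The plan is to apply the standard divisorial adjunction formula for dlt pairs (as recalled in the paragraph preceding this Proposition, following \cite{KM98}) at each fixed $t\in[a,b]$, and then to observe that the arithmetic data the formula produces is independent of $t$. This rigidity will automatically force the diff on $S$ to depend linearly on $t$.

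More concretely, first pick some $t_0\in[a,b]$. Since $(X,S+\Gamma(t_0))$ is dlt, divisorial adjunction produces a dlt pair $(S,\Theta(t_0))$ satisfying $(K_X+S+\Gamma(t_0))|_S=K_S+\Theta(t_0)$, with each coefficient of $\Theta(t_0)=\sum_j \theta_j(t_0)W_j$ of the form
\[
\theta_j(t_0)\;=\;\frac{m_j-1+\sum_{i=1}^{s} r_{j,i}\,b_i(t_0)}{m_j},
\]
where the $W_j$ are prime divisors on $S$, $m_j\in\Zz_{>0}$, and $r_{j,i}\in\Zz_{\geq 0}$. The key point is that the integers $m_j$ and $r_{j,i}$ are determined only by the geometry of $X$, $S$, and the reduced prime divisors $B_i$ near the generic point of $W_j$ — for instance, through an index-one cover at $W_j$: $m_j$ is the ramification index there and $r_{j,i}$ is the multiplicity of the pull-back of $B_i$ along the cover — and they do not involve the coefficients $b_i(t)$. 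Therefore the formula
\[
\Theta(t)\;\coloneqq\;\sum_j \frac{m_j-1+\sum_{i=1}^{s} r_{j,i}\,b_i(t)}{m_j}\,W_j
\]
defines a linear functional divisor on $[a,b]$, and by the pointwise adjunction at any other $t\in[a,b]$ this $\Theta(t)$ must coincide with the diff of $(X,S+\Gamma(t))$ along $S$. Hence $(K_X+S+\Gamma(t))|_S=K_S+\Theta(t)$ for every $t\in[a,b]$, and $(S,\Theta(t))$ is dlt fibrewise by the standard restriction property of dlt-ness to a reduced component of the boundary.

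It remains to verify that each $\theta_j(t)$ lies in $D(\{b_1(t),\ldots,b_s(t)\},[a,b])$. Since $(S,\Theta(t))$ is dlt, $\theta_j(t)\leq 1$ on $[a,b]$, so the non-negative integer combination $v_j(t)\coloneqq\sum_{i=1}^{s} r_{j,i}\,b_i(t)$ satisfies $v_j(t)\leq 1$ on $[a,b]$. Thus $v_j(t)\in(\{b_1(t),\ldots,b_s(t)\}_+,[a,b])$ and $\theta_j(t)\in D(\{b_1(t),\ldots,b_s(t)\},[a,b])$ by Definition~\ref{def: coefficient set}. The only genuinely non-bookkeeping step in the whole argument is the $t$-independence of the integers $m_j$ and $r_{j,i}$, which I expect to read off from the standard construction of the diff via index-one covers — or, equivalently, via a log resolution of $(X,S+\Supp\Gamma(t))$ that works simultaneously for all $t$, which exists because $\Supp\Gamma(t)$ is the same for every $t\in[a,b]$. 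All operations involved are determined by $(X,S,B_1,\ldots,B_s)$ as geometric objects, not by the variable coefficients $b_i(t)$, so the resulting multiplicities are indeed constant in $t$.
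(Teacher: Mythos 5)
Your proposal is correct and follows essentially the same route as the paper: apply the pointwise adjunction of \cite[\S 3]{Sho92}/\cite[\S 16]{Fli92} at a fixed parameter value, observe that the integers $m$ (the order of $\Weil(\mathcal{O}_{X,V})$ at the codimension-one point $V$ of $S$, i.e.\ your index-one-cover description) and $r_i$ are determined by $(X,S,B_1,\ldots,B_s)$ alone and hence independent of $t$, and conclude that the coefficients of $\Theta(t)$ are the linear functions $\frac{m-1+\sum r_i b_i(t)}{m}$, which lie in $D(\{b_1(t),\ldots,b_s(t)\},[a,b])$ since they are at most $1$ by dlt-ness. No gaps worth flagging.
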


\begin{proof}	
For any fixed $c \in (a, b)$, by \cite[\S 3]{Sho92} or \cite[\S 16]{Fli92}, there is an $\Rr$-divisor $\Theta(c)$ on $S$, such that
	\begin{equation*}
	(K_X+S+\Gamma(c))|_{S}=K_{S}+\Theta(c),
	\end{equation*}
	$(S, \Theta(c))$ is dlt, and the coefficient of $\Theta(c)$ at a codimension 1 point $V$ of $S$ is $\frac{m-1+\sum r_ib_i(c)}{m}$, where $m\in\Zz_{>0}$ is the order of the cyclic group $\Weil(\mathcal{O}_{X,V})$, and $r_i\in\Zz_{\ge0}$ is the coefficient of $mB_i|_{S}$ at $V$ for $1\le i\le s$. Hence $m$ and $r_i$ are independent of the choice of $c$, and the coefficient of $\Theta(t)$ at $V$ is 
	\[
	\frac{m-1+\sum r_ib_i(t)}{m}\in D(\{b_1(t),\ldots,b_m(t)\},[a,b]).
	\]
\end{proof}

The following lemma shows that the set of linear functions $\mathcal{D}(\Ff)$ behaves well after adjunction.

\begin{lemma}\label{le: properties of derived set}
Let $a<b$ be two real numbers. Let $\mathcal{F}\ni 1$ be a set of non-negative linear functions defined on $[a,b]$, and $D_0$ a finite subset of $\mathcal{D}(\Ff)$. Then
	\begin{enumerate}
		\item $D(\Ff)_+=D(\Ff)$,
		\item $\mathcal{D}(\mathcal{D}(\mathcal{F}))=\mathcal{D}(\mathcal{F})$,
		\item if $\Ff|_{a}\cup\Ff|_{b}$ is a DCC set, then  $\mathcal{D}^{-1}(\Dd(\Ff),D_0)$ is a finite set.
	\end{enumerate}
\end{lemma}

\begin{proof}
	The proof of (1) and (2) is similar to that of \cite[Lemma 4.4]{MP04}.
	
	\medskip
	
	\noindent(1) Suppose $g(t)\in D(\Ff)_+$. Then by definition there exist linear functions $f_i(t)\in \Ff_+$ and $m_i\in\Nn$ such that
	\begin{equation*}
	g(t)=\sum_i(\frac{m_i-1}{m_i}+\frac{f_i(t)}{m_i}).
	\end{equation*}
	Suppose that there are more than one index $i$ such that $m_i\geq 2$, say $m_1\ge2$, $m_2\ge2$. Since $g(t)\leq 1$, we know that $m_1=m_2=2$, $f_i(t)=0$, hence $g(t)=1$ and we are done. Otherwise, $g(t)$ is of the form $g(t)=\frac{m-1+f(t)}{m}$ where $f(t)\in\Ff_+$, thus $g(t)\in D(\Ff)$.
	
    \medskip 
  
	\noindent(2) By definition $\Dd(\Ff)\subseteq \Dd(\Dd(\Ff))$. To prove the inverse inclusion, suppose $h(t)\in \Dd(\Dd(\Ff))$. By definition, there exist a linear function $g(t)\in \Dd(\Ff,[a,c'])_+$ and an $n\in\Nn$ such that
	\begin{equation*}
	h(t)=\frac{n-1}{n}+\frac{g(t)}{n}.
	\end{equation*}
	Write $g(t)=\sum_i g_i(t)$ where $g_i(t)\in\Dd(\Ff,[a,c'])$, then there exists $c_i\in (a,c']$ such that 
	$g_i(t)\in D(\Ff,[a,c_i])$. 
	
	\medskip
	
	Set $c\coloneqq \min\{c_i\}$, then 
	$g(t)\in D(\Ff,[a,c])_+$. Apply (1) to $[a,c]$, and we know that $g(t)\in D(\Ff,[a,c])_+=D(\Ff,[a,c])$. So there exist a linear function $f_{+}(t)\in (\Ff_+,[a,c])$ and an $m\in\Nn$ such that
	\begin{equation*}
	g(t)=\frac{m-1}{m}+\frac{f_{+}(t)}{m}.
	\end{equation*}
	Thus
	\begin{equation*}
	\begin{aligned}
	h(t)=\frac{n-1}{n}+\frac{g(t)}{n}&=
	\frac{n-1}{n}+\frac{\frac{m-1}{m}+\frac{f_{+}(t)}{m}}{n}\\
	&=\frac{mn-m+m-1}{mn}+\frac{f_{+}(t)}{mn}\\
	&=\frac{r-1}{r}+\frac{f_{+}(t)}{r},
	\end{aligned}
	\end{equation*}
	where $r=mn$. Hence $h(t)\in D(\Ff,[a,c])\subseteq \Dd(\Ff)$.
	
	\medskip
	
	\noindent (3)  It suffices to show that for any $h(t)\in \Dd(\Ff)$,  $\Dd^{-1}(\Dd(\Ff),\{h(t)\})$ is a finite set.

	\medskip
	
	Take any $g_1(t)\in\Dd^{-1}(\Dd(\Ff),\{h(t)\})$. We may assume that $g_1(t)\neq 0,1$. There are $I\geq 1$, $c_i\in(a,b]$, $m_i\in\Nn$, $v_i(t)\in (\Ff_{+},[a,c_i])$, and $0\neq g_i(t)=\frac{m_i-1+v_i(t)}{m_i}\in D(\Ff,[a,c_i])$ for $1\leq i\leq I$ such that
	$h(t)=\frac{n-1}{n}+\frac{g(t)}{n}$, where
	$n\in\Nn$ and $g(t)=\sum_{i}g_i(t)$. Let $c\coloneqq\min\{c_i\}$.
	If there are more than one index $i$ such that $m_i\ge 2$, then by the argument of (1), $v_1(t)=0$ and $m_1\in\{1,2\}$, thus $g_1(t)$ belongs to a finite set and we are done. 
	Otherwise, there exists at most one index $i$, such that $m_i\ge 2$. Let $m=\max\{m_i\}$. If $m\ge 2$, then let $i_0$ be the index such that $m=m_{i_0}$, otherwise let $i_0=1$. Now there exists a linear function $f_{+}(t)\in (\Ff_+,[a,c])$ such that $g(t)=\frac{m-1}{m}+\frac{f_{+}(t)}{m}$. Hence $h(t)=\frac{mn-1}{mn}+\frac{f_{+}(t)}{mn}$. 
	
	\medskip
	
	If $h(t)=1$, then $g(t)=\sum_{i} \frac{m_i-1+v_i(t)}{m_i}=1$ and $I\geq 2$ as $g_1(t)\neq 1$. We know that $v_{i_0}(t)+\sum_{i\neq i_0}mv_i(t)= 1$.  Since $\Ff|_a\cup\Ff|_b$ is a DCC set, there exists $\epsilon>0$ such that for any $0\neq f(t)\in\Ff$, $\max\{f(a),f(b)\}>\epsilon$. Thus $m<\frac{1}{\epsilon}$. If $h(t)\neq 1$, then $h(c)<1$. Thus $\frac{mn-1}{mn}\le h(c)<1$ hence $m, n  \leq mn<\frac{1}{1-h(c)}$. Thus in both cases, $m_i$ and the function $g(t)=\sum_{i} \frac{m_i-1+v_i(t)}{m_i}$belong to a finite set. 
	
	\medskip
	
	Since $\mathcal{F}|_{a}\cup\mathcal{F}|_{b}$ is a DCC set, $v_i(a)$ and $v_i(b)$ belong to a finite set. We conclude that $v_1(t)$ belongs to a finite set since it is a linear function. Hence $g_1(t)$ belongs to a finite set, and we are done.
\end{proof}

\begin{remark}
Suppose that $\Ff\ni 1$ is a set of real linear functions, such that for any $f(t)\in \Ff$, $f(t)\geq 0$ for $t\in[a,b]$, and $\Ff|_a\cup\Ff|_b$ is a DCC set. Applying \cite[Proposition 3.4.1]{HMX14} to $\Ff|_a$ and $\Ff|_b$, we know that $D(\Ff)|_a$ and $D(\Ff)|_b$ are DCC sets. If $\Ff$ is a set of constant functions, then $\mathcal{D}(\Ff)|_{b}=\mathcal{D}(\Ff)|_{a}=D(\Ff)|_a$ is a DCC set, too. However, $\mathcal{D}(\Ff)|_{b}$ is no longer a DCC set in general. For example, let $[a,b]=[0,1]$, $\Ff=\{1\}\cup\{nt\}_{n\in\Zz_{> 0}}$, then $\{\frac{2m+1}{m}\}_{m\in\Zz_{> 0}}\subseteq\mathcal{D}(\Ff)|_{1}$ is not a DCC set.    
\end{remark}

\subsection{Divisorially log terminal modification}
For an lc pair $(X,\Delta)$, there exists a dlt modification of $(X,\Delta)$ (see \cite[Proposition 3.3.1]{HMX14}). We generalize that to the setting of linear functional divisors. 

\begin{proposition}[Common dlt modification]\label{prop: common dlt}
Let $a<b$ be two real numbers. Suppose that $(X,\Delta(t))$ is lc for $t\in[a,b]$, where $\Delta(t)$ is a linear functional divisor. Suppose that $S$ is a component of $\Supp\Delta(t)$. Then there is a projective birational morphism $\phi: Y\to X$ satisfying the following.
\begin{enumerate}
\item $Y$ is $\Qq$-factorial,
\item 
\[
K_{Y}+\tilde\Delta(t)+F=\phi^{*}(K_X+\Delta(t)),
\] 
where $F$ is the sum of $\phi$-exceptional divisors, 
\item $(Y, \tilde\Delta(t)+F)$ is dlt for any $t\in(a,b)$, and
\item if $\phi^{-1}(S)\neq\tilde{S}$, then there exists an exceptional divisor $E$, such that $\tilde{S}$ intersects the general fiber of $E\to \phi(E)^{\nu}$, where $\tilde S$ is the strict transform of $S$, and $\phi(E)^{\nu}$ is the normalization of $\phi(E)$.
\end{enumerate} 
\end{proposition}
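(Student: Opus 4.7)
The plan is to apply the classical dlt modification at a single interior parameter $t_0\in(a,b)$, and then use the linearity of discrepancies in $t$ to see that the same morphism serves the entire open interval; the key auxiliary fact is the connectedness of $\phi^{-1}(S)$, which will deliver (4).

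First, fix $t_0\in(a,b)$. Since $(X,\Delta(t_0))$ is lc, \cite[Proposition 3.3.1]{HMX14} produces a projective birational morphism $\phi:Y\to X$ such that $Y$ is $\Qq$-factorial, every $\phi$-exceptional divisor is an lc place of $(X,\Delta(t_0))$, $K_Y+\tilde\Delta(t_0)+F=\phi^{*}(K_X+\Delta(t_0))$ with $F$ the reduced sum of $\phi$-exceptional divisors, and $(Y,\tilde\Delta(t_0)+F)$ is dlt. This gives (1) at once. The promotion step rests on the following observation: for any prime divisor $E$ over $X$, the map $t\mapsto a(E,X,\Delta(t))$ is a linear function of $t$, which is $\geq -1$ on $[a,b]$ by the lc hypothesis. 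If it equals $-1$ at an interior point $t_0$, linearity together with the endpoint constraints forces it to be identically $-1$ on $[a,b]$. Hence the set of lc places of $(X,\Delta(t))$ is the same for every $t\in(a,b)$, and the pullback identity in (2) holds for all such $t$ with the same reduced exceptional divisor $F$.

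For (3), by the paper's convention the support $\Supp(\tilde\Delta(t)+F)$ is independent of $t$, so the snc open subset $U\subseteq Y$ witnessing dlt-ness of $(Y,\tilde\Delta(t_0)+F)$ can be reused for every $t\in(a,b)$. Lc-ness of $(Y,\tilde\Delta(t)+F)$ transports from $(X,\Delta(t))$ via (2); and the lc centers of $(Y,\tilde\Delta(t)+F)$, being the $\phi$-images of the (now $t$-independent) set of lc places, do not depend on $t$ either, so their generic points continue to lie in $U$. This gives dlt-ness on $(a,b)$.

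For (4), assume $\phi^{-1}(S)\supsetneq\tilde S$ as sets. Since $\phi$ is a proper birational morphism between normal varieties, $\phi_{*}\Oo_{Y}=\Oo_{X}$, and together with the irreducibility of $S$, Zariski's connectedness theorem implies $\phi^{-1}(S)$ is connected. As $\tilde S$ is a proper closed subset of this connected set, $\tilde S$ cannot be clopen in $\phi^{-1}(S)$, so there is a point $p\in\tilde S$ in the closure of $\phi^{-1}(S)\setminus\tilde S$ (taken inside $Y$). Every point of $\phi^{-1}(S)\setminus\tilde S$ lies in $\Exc(\phi)$, hence in some $\phi$-exceptional prime divisor; passing to closures, $p$ lies in some $\phi$-exceptional prime $E\subset F$, and therefore $E\cap\tilde S\ni p$, establishing (4).

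The main obstacle is to convince oneself that the dlt condition is genuinely preserved along the whole open interval rather than only at $t_0$: a priori, as $t$ varies the pair could acquire new lc centers whose generic points escape $U$. This is precisely what the linearity-of-discrepancy argument rules out, and it is why (3) is asserted only on the open interval $(a,b)$. With the constancy of the lc place set, plus the fixed support convention and the lc coefficient bound inherited from $[a,b]$, the whole proposition becomes a direct repackaging of the single-parameter dlt modification from \cite{HMX14}.
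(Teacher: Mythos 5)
Your treatment of (1)--(3) is essentially the paper's own argument: fix an interior parameter $t_0\in(a,b)$, apply \cite[Proposition 3.3.1]{HMX14} to $(X,\Delta(t_0))$, and use the linearity of $t\mapsto a(E,X,\Delta(t))$ together with the constraint $a(E,X,\Delta(t))\ge -1$ on $[a,b]$ to propagate both the coefficient-one pullback formula and dlt-ness to the whole open interval. That part is correct.

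The proof of (4) has a genuine gap. From ``$p\in\Exc(\phi)$'' you conclude that $p$ lies on a $\phi$-exceptional \emph{prime divisor}, i.e.\ you implicitly assume $\Exc(\phi)=\Supp F$. Pure divisoriality of the exceptional locus is guaranteed when the \emph{target} of the birational morphism is $\Qq$-factorial, not when the source is; here $X$ is only normal, so $\phi$ may have small exceptional components, and $p$ may lie only on one of those. Worse, statement (4) is simply false for an arbitrary dlt modification, so no purely topological argument can work: take $X=\{xy=zw\}\subseteq\Cc^4$, let $S=\{x=z=0\}$ and $S'=\{x=w=0\}$, and set $\Delta(t)=\epsilon(t)(S+S')$ with $\epsilon(t)>0$ small, so that $K_X+\Delta(t)$ is Cartier and the pair is klt. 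Any dlt modification of a klt pair is small, hence $F=0$; choosing the small resolution $\phi:Y\to X$ on which the exceptional curve $C$ satisfies $\tilde S\cdot C>0$ (so $C\nsubseteq\tilde S$), one gets $\phi^{-1}(S)=\tilde S\cup C\supsetneq\tilde S$ while there is no exceptional divisor at all. The point is that (4) depends on \emph{which} dlt modification one takes. The paper uses the additional output of \cite[Proposition 3.3.1]{HMX14}: there is an effective exceptional divisor $G$ whose components have centers contained in $S$ such that $-(\tilde S+G)$ is nef over $X$; the negativity lemma \cite[Lemma 3.39]{KM98} then yields $\phi^{-1}(S)\subseteq\Supp(\tilde S+G)$, and only at that stage does your (correct) connectedness argument force a component of $G\subseteq F$ to meet $\tilde S$. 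You need to add this nefness input and the negativity-lemma step; without them the argument does not close.
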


\begin{proof}
For a fixed interior point $c\in (a,b)$, by \cite[Proposition 3.3.1]{HMX14}, there is a dlt modification of $(X,\Delta(c))$, $\phi: Y \to X$, such that
\[K_{Y}+\tilde\Delta(t)+\sum a_i(t)F_i=\phi^{*}(K_X+\Delta(t)),\]
$(Y, \tilde\Delta(c)+\sum F_i)$ is $\Qq$-factorial dlt, $a_i(t)$ are linear functions, and $a_i(c)=1$, where $\tilde\Delta(t)$ is the strict transform of $\Delta(t)$ and $F_i$ are exceptional divisors. Moreover, there is a divisor of the form $-\tilde S-G$ which is nef over $X$, where $G\geq 0$ is a sum of exceptional divisors whose centers are contained in $S$.

\medskip

We claim that $(Y, \tilde\Delta(t)+\sum a_i(t)F_i)$ is a dlt modification of $(X,\Delta(t))$ for any $t\in(a,b)$. By the property of dlt singularities, there exists a log resolution $Y' \to Y$ of $(Y, \tilde\Delta(c)+\sum a_i(c)F_i)$ such that $a(F',Y,\tilde\Delta(c)+\sum a_i(c)F_i)>-1$ for every exceptional divisor $F'\subseteq Y'$. $Y' \to Y$ is also a log resolution for $(Y, \tilde\Delta(t)+\sum a_i(t)F_i)$ since $c$ is an interior point. Since $a(F',Y,\tilde\Delta(t)+\sum a_i(t)F_i)$ is a linear function, and greater than or equal to $-1$ for $t\in[a,b]$,  $a(F',Y,\tilde\Delta(t)+\sum a_i(t)F_i)>-1$ for $t\in(a,b)$. Thus $(Y, \tilde\Delta(t)+\sum a_i(t)F_i)$ is dlt for $t\in(a,b)$. Since $a_i(c)=1$ and $a_i(t)\le 1$ for $t\in[a,b]$, $a_i(t)\equiv 1$.

\medskip

For (4), let $\mathcal{E}$ be the subset of exceptional divisors 
$$\mathcal{E}\coloneqq\{E'\mid\Supp E'\subseteq \Supp G, E'\cap \tilde{S}\neq \emptyset\}.$$ For any closed point $p \in S$, $\tilde S \cap \phi^{-1}(p) \neq \emptyset$. Since $-\tilde S-G$ is nef over $X$, $\phi^{-1}(p) \subseteq \Supp (\tilde S+G)$ by the negativity lemma (see \cite[Lemma 3.39(2)]{KM98}). This shows $\phi^{-1}(S) = \Supp (\tilde S+G)$. Let $E''\in\mathcal{E}$ such that $\dim \phi(E'')=\dim\cup_{E'\in \mathcal{E}}\phi(E')$. Since $\phi^{-1}(S)=\Supp(\tilde{S}+G)$, 
\[
\phi^{-1}(x)\cap \tilde{S}\cap\Supp(\cup_{E' \in \mathcal{E}}E')\neq \emptyset
\] for any $x\in \phi(E'')$. Thus there exists $E\in\mathcal{E}$, such that $\phi^{-1}(x)\cap \tilde{S}\cap E\neq\emptyset$ for any $x\in \phi(E'')$. 
We conclude that $\phi(E)=\phi(E')$ and $\tilde{S}$ intersects the general fiber of $E\to\phi(E)^{\nu}$ as $\dim \phi({E}'')\ge\dim\phi(E)$. 
\end{proof}

\begin{remark}Proposition \ref{prop: common dlt}(4) will be used in the following cases. 
\begin{enumerate}
    \item If $V\subseteq S$ is a common lc center of $(X,\Delta(t))$ for $t\in[a,b]$, and the coefficient of $S$ in $\Delta(t)$ is not 1, then $\phi^{-1}(S)\neq \tilde{S}$. Hence there is an exceptional divisor $E$, such that $\tilde{S}$ intersects the general fiber of $E\to \phi(E)^{\nu}$.
    \item Suppose that $B$ is another component of $\Delta(t)$, such that $B$ intersects $S$. If $\tilde{B}$ (the strict transform of $B$) does not intersect $\tilde{S}$, then $\phi^{-1}(S)\neq \tilde{S}$. Hence there is an exceptional divisor $E$, such that $\tilde{S}$ intersects the general fiber of $E\to \phi(E)^{\nu}$.
\end{enumerate}
\end{remark}

\subsection{Bounded family}

\begin{definition}[Bounded family]\label{def: bounded family}
A set of varieties $\mathcal{X}$ forms a \emph{bounded family} if there is a projective morphism of varieties $f: Z\to T$, with $T$ of finite type, such that for every $X\in\mathcal{X}$, there is a closed point $t\in T$ and an isomorphism $Z_t \simeq X$, where $Z_t$ is the fiber of $f$ at $t$.
\end{definition}

\begin{definition}[Fano, $\epsilon$-lc Fano]
A log pair $(X,\Delta)$ is called \emph{Fano} if it is lc, $X$ is projective and $-(K_X+\Delta)$ is ample. For a real number $\epsilon\ge0$, $(X,\Delta)$ is called \emph{$\epsilon$-lc Fano} if it is Fano and $\epsilon$-lc.
\end{definition}

We need the following result on boundedness which is known as  Birkar-Borisov-Alexeev-Borisov Theorem.

\begin{theorem}[{\cite[Theorem 1.1]{Bir16b}}]\label{thm: BAB}
Let $d$ be a positive integer and $\epsilon$ a positive real number. Then the set of $\epsilon$-lc Fano varieties $X$ with $\dim X=d$ forms a bounded family.
\end{theorem}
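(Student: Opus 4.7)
Since this is the statement of the Borisov-Alexeev-Borisov conjecture for $\epsilon$-lc Fano varieties, proved in full generality by Birkar, the realistic plan is to follow his two-part strategy, which itself consolidates decades of work. The first step is to reduce boundedness of the class of $\epsilon$-lc Fano varieties of dimension $d$ to a uniform upper bound on the anti-canonical volume $(-K_X)^{d}$. Indeed, granted such a volume bound together with the $\epsilon$-lc hypothesis (which keeps $X$ and its singularities in a controlled range), one can fix $m=m(d,\epsilon)$ for which $-mK_X$ becomes very ample with bounded Hilbert polynomial for every $X$ in the class, verifying Definition \ref{def: bounded family} via the corresponding Hilbert scheme.

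The core technical ingredient is \emph{effective birationality}: one must show there exists $m=m(d,\epsilon)$ such that $|{-}mK_X|$ defines a birational map for every $\epsilon$-lc Fano $X$ of dimension $d$. From effective birationality a standard comparison gives a lower bound on the self-intersection of the mobile part of $-mK_X$, and combined with an upper-bound argument (using that on a klt Fano the volume is controlled by the degree of the map defined by the anti-canonical system) one obtains the needed volume control. To establish effective birationality I would argue by induction on $d$ through Shokurov's theory of \emph{complements}: one proves a fixed $n=n(d)$ such that every lc Fano of dimension $d$ admits a non-klt $n$-complement $K_X+\Delta^+$, so that adjunction along a component of $\lfloor \Delta^+ \rfloor$ produces a lower-dimensional log Calabi-Yau pair whose coefficients lie in a DCC set of the form $D(\Ff)$. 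The inductive hypothesis and ACC-type results such as Theorem \ref{thm: ACC for lct} then constrain the structure in dimension $d-1$ enough to be promoted back to dimension $d$.

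The main obstacle will be orchestrating the induction across the intertwined statements (boundedness of complements, boundedness of Fano type fibrations, effective birationality): creating a non-klt center inside $|{-}mK_X|$ requires control over minimal log discrepancies along covering families of subvarieties, which in turn leans on boundedness results in lower dimension. A further subtlety is that the non-klt locus need not have well-behaved geometry, forcing one to work with generalized pairs in Birkar's sense and to run MMPs whose output fibrations must remain inside a bounded class. Since the full proof occupies Birkar's two papers and uses a substantial machinery that the present work does not otherwise develop, in practice the theorem is simply invoked as a black box and used in the subsequent reduction to bounded families of $\epsilon$-lc Fano varieties.
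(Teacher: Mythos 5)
The paper offers no proof of this statement: it is quoted verbatim as \cite[Theorem 1.1]{Bir16b} and used as a black box, exactly as you conclude at the end of your proposal. Your sketch of Birkar's strategy (reduction to anti-canonical volume bounds, effective birationality via bounded complements, induction on dimension through adjunction and generalized pairs) is a broadly accurate summary of the external argument, so your treatment matches the paper's.
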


\subsection{Two ray game}\label{subsec: two ray game}
The following variant of the two ray game (see \cite[\S 9]{MP04}) for linear functional divisors will be used in the proof.

\medskip

Let $Y$ be a $\Qq$-factorial normal projective variety with Picard number two. Then the closed cone of effective curves of $Y$ is spanned by two rays $R$ and $S$. Now suppose that there is a sequence of flips $g: Y\dashrightarrow Y'$ with respect to a fixed pseudo-effective divisor. Then the cone of effective curves of $Y'$ is also spanned by two rays $R'$ and $S'$. Possibly switching the roles of $R$ and $S$, we may assume that $R$ is flipped in $Y$ and $S'$ is spanned by the last flipping curve in $Y'$.

\medskip

Let $a<b$ be two real numbers. Suppose that $(Y,\Delta(t))$ is a log pair, such that $\Delta(t)$ is a linear functional divisor of $t$ on $[a,b]$ and $K_{Y}+\Delta(t)$ is numerically trivial for $t\in[a,b]$. Suppose that both rays $S$ and $R'$ are contractible, which induce divisorial contractions. Let $f:Y\to X$ be the contraction of $S$, and let $f':Y'\to X'$ be the contraction of $R'$. Let $E_1$ and $E_2'$ be the corresponding exceptional divisors respectively. Let $E_1'$, $\Gamma'(t)$ be the strict transforms of $E_1,\Gamma(t)$ on $Y'$ respectively and $E_2$ the strict transform of $E_2'$ on $Y$.

\medskip

Write
\[
K_{Y}+\Delta(t)\coloneqq K_{Y}+\Gamma(t)+u(t)E_1+v(t)E_2,
\]
where $u(t), v(t)$ are linear functions, $E_1\nsubseteq\Supp\Gamma(t)$ and $E_2\nsubseteq\Supp\Gamma(t)$.

\begin{proposition}[Two ray game]\label{prop: two ray game}
	Under the above assumptions, suppose that $(Y,\Delta(t))$ is klt for $t\in(a,b)$, $K_{Y}+\Delta(t)\equiv 0$, and $u(a)=v(a)=1$. Then there exist linear functions $u'(t),v'(t)$ and $\epsilon>0$, such that $u(t)<u'(t)\le1$ and $v(t)<v'(t)\le 1$ for any $t\in(a, +\infty)$, and one of the following two cases holds.
	\begin{enumerate}
		\item $E_2$ intersects $S$, $v'(t)<1$ for any $t\in(a, +\infty)$, such that $Y\to X$ is $(K_Y+\Gamma(t)+u'(t)E_1+v'(t)E_2)$-trivial, $(Y,\Gamma(t)+u'(t)E_1+v'(t)E_2)$ is lc for $t\in(a,a+\epsilon)$, and there exists a common lc center of the pair contained in $E_1\cup E_2$.		
		\item $E_1'$ intersects $R'$, $u'(t)<1$ for any $t\in(a, +\infty)$, such that $Y'\to X'$ is $(K_{Y'}+\Gamma'(t)+u'(t)E_1'+v'(t)E_2')$-trivial, $({Y'}+\Gamma'(t)+u'(t)E_1'+v'(t)E_2')$ is lc for $t\in(a, a+\epsilon)$, and there exists a common lc center of the pair contained in $E_1'\cup E_2'$.
	\end{enumerate}
\end{proposition}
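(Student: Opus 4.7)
The strategy is to modify the coefficients $u(t), v(t)$ of $E_1, E_2$ to new linear functions $u'(t), v'(t)$ producing a common log canonical place, coming from $E_1$ on $Y$ (case (1)) or from $E_2'$ on $Y'$ (case (2)), by intersecting the modified divisor against curves in the extremal rays on $Y$ and $Y'$.

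\emph{Parametrization and ray-wise computation.} Since $(Y,\Delta(t))$ is klt on $(a,b)$ while $u(a)=v(a)=1$, the slopes of $u,v$ are strictly negative; write $u(t)=1-\mu(t-a)$ and $v(t)=1-\nu(t-a)$ with $\mu,\nu>0$. The bounds $u(t)<u'(t)\le 1$ and $v(t)<v'(t)\le 1$ on $(a,+\infty)$, together with linearity of $u',v'$, force $u'(a)=v'(a)=1$; so we may search for $u'(t)=1-\mu'(t-a)$ and $v'(t)=1-\nu'(t-a)$ with $\mu'\in[0,\mu)$ and $\nu'\in[0,\nu)$. Imposing the triviality $K_Y+\Gamma(t)+u'(t)E_1+v'(t)E_2\equiv_f 0$, subtracting $K_Y+\Delta(t)\equiv 0$, and testing against a curve $C_S$ of class $S$ yields
\[
(\mu-\mu')(E_1\cdot C_S)+(\nu-\nu')(E_2\cdot C_S)=0.
\]
Since $E_1$ is $f$-exceptional one has $E_1\cdot C_S<0$; solvability with $\mu-\mu'>0$ therefore demands $E_2\cdot C_S>0$, which (because the fibers of $f$ sweep out $E_1$) is equivalent to $E_1\cap E_2\ne\emptyset$. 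The extremal choice $\mu'=0$ makes $u'(t)\equiv 1$, so $E_1$ becomes a common lc place supplying the required common lc center inside $E_1\cup E_2$, and it forces $\nu'=\nu-\mu\,|E_1\cdot C_S|/(E_2\cdot C_S)$; this lies in $(0,\nu)$, giving $v'(t)<1$ on $(a,+\infty)$, exactly when $\mu\,|E_1\cdot C_S|<\nu\,(E_2\cdot C_S)$, which is case (1). A completely parallel analysis on $Y'$, using that $K_{Y'}+\Delta'(t)\equiv 0$ is preserved under the $(K_Y+\Delta(t))$-trivial flips and testing against a curve $C_{R'}$ of class $R'$ with $E_2'\cdot C_{R'}<0$, produces case (2) via $\nu'=0$ under the reversed strict inequality, making $E_2'$ a common lc place.

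\emph{Dichotomy and lc check.} The main obstacle is to show that one of the two scenarios is always available. This is where the two-ray-game structure enters (cf.\ \cite[\S 9]{MP04}): because $Y\dashrightarrow Y'$ is a $(K_Y+\Delta(t))$-trivial sequence of flips along $R$ and the birational transforms identify $E_1\leftrightarrow E_1'$ and $E_2\leftrightarrow E_2'$, the intersection numbers of $(E_1,E_2)$ against $(R,S)$ on $Y$ are tightly linked to those of $(E_1',E_2')$ against $(R',S')$ on $Y'$. A direct case analysis, combining $E_1\cdot S<0$ and $E_2'\cdot R'<0$ with the sign-change rules under flips (so that $E_2\cdot R<0$ on $Y$ and $E_1'\cdot R'>0$ on $Y'$), shows that whenever the case (1) strict inequality fails—including the boundary situation $E_1\cap E_2=\emptyset$—the $Y'$-side satisfies $E_1'\cap E_2'\ne\emptyset$ together with the case (2) strict inequality. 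Finally, at $t=a$ the modified pair equals $(Y,\Delta(a))$, which is lc as the limit of the klt pairs $(Y,\Delta(t))$; the numerical triviality makes the modified pair crepant over $X$ (or $X'$ in case (2)), so lc on $Y$ (resp.\ $Y'$) reduces to lc on the base, and applying Lemma \ref{le: linearity of lct} to the perturbation by $(\mu-\mu')E_1+(\nu-\nu')E_2$ shows that both lc and the common lc place persist on some $(a,a+\epsilon)$, yielding the required $\epsilon>0$.
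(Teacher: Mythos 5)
Your skeleton is close to the paper's: the dichotomy you set up (whether $\nu\,(E_2\cdot C_S)>\mu\,|E_1\cdot C_S|$) is exactly the paper's condition $D(t)\cdot S>0$ for $D(t)=K_Y+\Gamma(t)+E_1+E_2\equiv(1-u(t))E_1+(1-v(t))E_2$, and the idea of raising $(u,v)$ along an $S$-trivial (resp.\ $R'$-trivial) direction is the right one. But there are two genuine gaps.

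First, the choice $\mu'=0$, i.e.\ $u'(t)\equiv 1$, is not legitimate. The proposition requires $(Y,\Gamma(t)+u'(t)E_1+v'(t)E_2)$ to be \emph{lc} on $(a,a+\epsilon)$, and nothing guarantees that you may push the coefficient of $E_1$ all the way to $1$ (while also raising $v$) without leaving the lc region: some other divisor over $Y$ may acquire discrepancy $-1$ first. Your justification --- that the pair is lc at $t=a$ and hence lc for $t$ slightly larger --- is invalid: log canonicity is a closed condition, not an open one, and you are increasing coefficients strictly beyond the klt range of $\Delta(t)$. The paper avoids this by defining the increment as a log canonical threshold, $\zeta(t)=\lct(Y,\Gamma(t)+u(t)E_1+v(t)E_2;E_1+\lambda E_2)$ with $\lambda$ chosen so that $E_1+\lambda E_2$ is $S$-trivial; this simultaneously forces $f$-triviality, keeps the pair lc by definition, and produces a common lc center (via Lemma \ref{le: linearity of lct}), with the conclusion $u'(t)\le 1$ rather than $u'(t)=1$. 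Note the statement only asks for a common lc center contained in $E_1\cup E_2$, not for $E_1$ itself to be an lc place.

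Second, the dichotomy --- the heart of the proposition --- is asserted rather than proved. The ``sign-change rules'' you invoke ($E_2\cdot R<0$ on $Y$ and $E_1'\cdot R'>0$ on $Y'$) do not follow from the setup and are not what is needed. The paper's argument in the complementary case $D(t)\cdot S\le 0$ is: $D(t)\equiv(1-u(t))E_1+(1-v(t))E_2$ is effective and nonzero for $t>a$, hence positive on at least one of the two extremal rays, so $D(t)\cdot R>0$; then, by induction on the flips in $Y\dashrightarrow Y'$, the strict transform satisfies $D'(t)\cdot S'<0$ and hence $D'(t)\cdot R'>0$, which is precisely the hypothesis needed to rerun the Case 1 argument on $Y'$ with the contraction $f'$ of $R'$ and exceptional divisor $E_2'$. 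Without tracking this effective divisor through the flips, your ``direct case analysis'' does not establish that case (2) is available when case (1) fails.
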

\begin{proof}
Since both $u(t)$ and $v(t)$ are linear functions, $u(a)=v(a)=1$, and $(Y,\Delta(t))$ is klt for $t\in(a,b)$. We have $u(t)<1$ and $v(t)<1$ for any $t\in(a,+\infty)$. Let $D(t)=K_Y+\Gamma(t)+E_1+E_2$. We will consider the following two cases.

\medskip 

\noindent {\bf Case 1.}	There exists $\epsilon>0$, such that $D(t)\cdot S>0$ when $t\in(a,a+\epsilon)$. Since $E_1 \cdot S<0$, it follows that 
$$E_2 \equiv \frac{1}{1-v(t)}(D(t) - (1-u(t))E_1)$$ 
is $S$-positive when $t\in(a,a+\epsilon)$, and $E_2$ intersects $S$. Let $-E_1\cdot S=\lambda (E_2 \cdot S)$ for some $\lambda>0$, and
	\[
	\zeta(t)\coloneqq \lct(Y, \Gamma(t)+u(t)E_1+v(t)E_2; E_1+\lambda E_2).
	\] 
		
	Then 
	$$(K_Y+\Gamma(t)+(u(t)+\zeta(t))E_1+(v(t)+\lambda\zeta(t))E_2)\cdot S=0.$$ 
	
		Since $(Y,\Delta(t))$ is klt for $t\in(a,b)$, $\zeta(t)>0$ for $t\in(a,a+\epsilon)$. By Lemma \ref{le: linearity of lct}, possibly replacing $\epsilon$ with a smaller positive real number, we may assume that $\zeta(t)$ is linear for $t\in(a, a+\epsilon)$, and there is a common lc center of $(Y,\Gamma(t)+(u(t)+\zeta(t))E_1+(v(t)+\lambda\zeta(t))E_2)$ contained in $E_1\cup E_2$ for any $t\in(a,a+\epsilon)$. 
	
	\medskip
	
	For $t\in (a, a+\epsilon)$, define
	\[
	u'(t)=u(t)+\zeta(t),\quad v'(t)=v(t)+\lambda\zeta(t),
	\]
	and extend the domain of $u'(t)$ and $v'(t)$ to $(-\infty,+\infty)$ by linearity.
	
	\medskip
	
	Since $\zeta(t)>0$ for $t\in(a,a+\epsilon)$, $u(a)=v(a)=1$ implies that $\zeta(a)=0$. Thus $u(t)<u'(t)\le1$, $v(t)<v'(t)\le1$ when $t\in(a, a+\epsilon)$. Suppose that $v'(t)\equiv 1$ for $t\in(a,a+\epsilon)$, then 
	\begin{equation*}
	\begin{aligned}
	0=&(K_Y+\Gamma(t)+u'(t)E_1+v'(t)E_2)\cdot S\\
	=&(K_Y+\Gamma(t)+u'(t)E_1+E_2)\cdot S\\
	\ge&(K_Y+\Gamma(t)+E_1+E_2)\cdot S>0,
	\end{aligned}
	\end{equation*}
	a contradiction. Thus $v'(t)<1$ for any  $t\in(a,a+\epsilon)$. Since $u'(a)=1,v'(a)=1$, and $u'(t)$, $v'(t)$ are linear functions, we have $u(t)<u'(t)\le1$, $v(t)<v'(t)<1$ for any $t\in(a, +\infty)$. 
	
	\medskip
	
	\noindent {\bf Case 2.} There exists $\epsilon>0$, such that $D(t)\cdot S\le0$ when $t\in(a,a+\epsilon)$. Since $D(t)\equiv (1-u(t))E + (1-v(t))E'\neq0$ is effective when $t>a$, $D(t)\cdot R>0$. We claim that $D'(t) \cdot R' >0$ for any $t>a$, where $D'(t)$ is the strict transform of $D(t)$ on $Y'$. Since $Y \dashrightarrow Y'$ is a sequence of flips, by the induction on the number of flips, it suffices to consider the case when $Y\dashrightarrow Y'$ is a flip. Since $D(t) \cdot R>0$ and $D'(t)\cdot S'<0$, $D'(t) \cdot R' >0$ and the claim is proved. Now (2) follows from the same argument as in Case 1 on $Y'$ for $R'$. 
\end{proof}

\section{Proof of Theorem \ref{thm: linear global ACC}}\label{sec: proof}

In this section, we will prove Theorem \ref{thm: linear global ACC}. It is a consequence of Theorem \hyperlink{thm: N}{N}. The proof of Theorem \hyperlink{thm: N}{N} and Theorem \hyperlink{thm: P}{P} proceeds by induction on dimensions. The ideas of the proof of Theorem \ref{thm: linear global ACC} go back to Shokurov and \cite{MP04}.

\medskip

\noindent {\bf \hypertarget{thm: N}{Theorem N}} (ACC for numerically trivial pairs){\bf.} 
{\it 	Let $n \in \Nn$, and let $a<b$ be two real numbers. Suppose that $\Ff\ni 1$ is a set of real linear functions, such that for any $f(t)\in \Ff$, $f(t)\geq 0$ for $t\in[a,b]$, and $\Ff|_a\cup\Ff|_b$ is a DCC set. Then there is a finite subset $D_0 \subseteq \Dd(\Ff)$ satisfying the following. If
	\begin{enumerate}
		\item $X$ is a normal projective variety of dimension $n$,
	\item $\Delta(t) \in \Dd(\Ff)$ is a linear functional divisor of $t$ on $[a,b]$,
		\item there exists $a< b_X \leq b$, such that $(X,\Delta(t))$ is lc for any $t\in[a, b_X]$, and
		\item $K_{X}+\Delta(t)$ is numerically trivial for any $t\in[a, b]$,
	\end{enumerate}
	then $\Delta(t) \in D_0$. 
	}

\begin{remark}
	Comparing with Theorem \ref{thm: linear global ACC}, we require $\Delta(t)\in\Dd(\Ff)$ in Theorem \hyperlink{thm: N}{N}. This facilitates the induction argument since by Lemma \ref{le: properties of derived set}, $\Dd(\Dd(\Ff)) = \Dd(\Ff)$.
\end{remark}

\noindent {\bf \hypertarget{thm: P}{Theorem P}} (ACC for numerically trivial pairs of Picard number $1$){\bf .} {\it 	Let $n \in \Nn$, and let $a<b$ be two real numbers. Suppose that $\Ff\ni 1$ is a set of real linear functions, such that for any $f(t)\in \Ff$, $f(t)\geq 0$ for $t\in[a,b]$, and $\Ff|_a\cup\Ff|_b$ is a DCC set. Then there is a finite subset $D_0 \subseteq \Dd(\Ff)$ satisfying the following. If
	\begin{enumerate}
		\item $X$ is a normal projective $\Qq$-factorial Fano variety of dimension $n$ with Picard number $1$,
		\item $\Delta(t) \in \Dd(\Ff)$ is a linear functional divisor of $t$ on $[a,b]$,
		\item there exists $a< b_X \leq b$, such that $(X,\Delta(t))$ is dlt for any $t\in(a, b_X]$, and
		\item $K_{X}+\Delta(t)$ is numerically trivial for any $t\in[a, b]$,
	\end{enumerate}
	then $\Delta(t) \in D_0$. 
}

\begin{remark}\label{rmk: after thm P}
	Comparing with Theorem \hyperlink{thm: P}{N}, Theorem \hyperlink{thm: P}{P} adds ``$\Qq$-factorial Fano variety with Picard number $1$'' in condition (1), and requires $(X,\Delta(t))$ to be dlt for $t\in(a, b_X]$ in condition (3). Note that $(X,\Delta(t))$ is automatically lc for $t\in[a, b_X]$ since lc is a closed condition.
\end{remark}

In the following inductive argument for Theorem \hyperlink{thm: N}{N} and Theorem \hyperlink{thm: P}{P}, Theorem \hyperlink{thm: N}{N\textsubscript{$n$}}, Theorem \hyperlink{thm: N}{N\textsubscript{$\leq n$}}, etc., stand for Theorem \hyperlink{thm: N}{N} for varieties of dimension $n$, $\leq n$, etc. 

\begin{proposition}\label{prop: P+N implies N}
	Theorem \hyperlink{thm: P}{P\textsubscript{$n$}} and Theorem \hyperlink{thm: N}{N\textsubscript{$\leq n-1$}} imply Theorem \hyperlink{thm: N}{N\textsubscript{$n$}}.
\end{proposition}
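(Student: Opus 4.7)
The plan is to reduce a log Calabi--Yau pair $(X,\Delta(t))$ satisfying the hypotheses of Theorem~$N_n$ to either (i) a pair of Picard rank one, where Theorem~$P_n$ applies directly, or (ii) a log Calabi--Yau pair of strictly smaller dimension obtained by adjunction to a log canonical center, where Theorem~$N_{\leq n-1}$ applies.

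First I would pass to a common $\Qq$-factorial dlt modification $\phi\colon Y\to X$ via Proposition~\ref{prop: common dlt}, so that $K_Y+\Gamma(t)=\phi^{*}(K_X+\Delta(t))$ with $\Gamma(t)=\tilde\Delta(t)+F$, where $F$ is the reduced exceptional divisor. Since $1\in\Ff\subseteq\Dd(\Ff)$, the coefficients of $\Gamma(t)$ remain in $\Dd(\Ff)$; moreover $K_Y+\Gamma(t)\equiv 0$, and $(Y,\Gamma(t))$ is $\Qq$-factorial dlt on $(a,b_Y]$ for some $a<b_Y\leq b$. Because $\Delta(t)$ is recovered from $\Gamma(t)$ by dropping the components in $F$ and passing to strict transforms, it suffices to bound the coefficients of $\Gamma(t)$. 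If $\rho(Y)=1$, then $-K_Y\equiv\Gamma(t)\geq 0$, so $Y$ is a $\Qq$-factorial Fano of Picard rank one and Theorem~$P_n$ places $\Gamma(t)$ in a finite subset $D_0\subseteq\Dd(\Ff)$.

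If $\rho(Y)\geq 2$, I would pick a component $S_0$ of $\Gamma(t)$ and run a $(-\epsilon S_0)$-MMP for sufficiently small $\epsilon>0$. Because $K_Y+\Gamma(t)\equiv 0$, every extremal contraction and flip is automatically $(K_Y+\Gamma(t))$-trivial, so numerical triviality, the lc condition on some (possibly shrunk) interval, and the membership of coefficients in $\Dd(\Ff)$ are preserved at each step. Iterating either reduces the Picard rank to one (and Theorem~$P_n$ concludes) or terminates in a Mori fiber space $f\colon Y'\to Z$ with $\dim Z<n$. In that case, restricting $\Gamma'(t)$ to a general fiber $F$ yields a log Calabi--Yau pair $(F,\Gamma'(t)|_F)$ to which Theorem~$N_{\leq n-1}$ applies, controlling the coefficients of the horizontal components of $\Gamma'(t)$. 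For vertical components $E$, I would work on Picard-rank-two models over $Z$ and apply the two ray game of Proposition~\ref{prop: two ray game} to produce a common log canonical center $W\subseteq E_1\cup E_2$; then adjunction (Proposition~\ref{prop: adjunction}), together with the stability $\Dd(\Dd(\Ff))=\Dd(\Ff)$ from Lemma~\ref{le: properties of derived set}, hands us a log Calabi--Yau pair $(W,\Theta(t))$ of dimension $<n$ whose coefficients are again controlled by Theorem~$N_{\leq n-1}$. Tracing back, $\Gamma(t)$ lies in a finite subset $D_0\subseteq\Dd(\Ff)$, and finiteness of $\Dd^{-1}(D_0)\subseteq\Ff$ follows by unwinding the definition of $\Dd$ and invoking the DCC of $\Ff|_a\cup\Ff|_b$.

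The main obstacle I anticipate is the Mori fiber space step: restriction to a general fiber only detects the coefficients of horizontal components, so the vertical coefficients must be controlled by a separate argument combining further MMP runs with the two ray game. The bookkeeping across multiple contractions, flips, and adjunctions, while ensuring that the dlt/lc interval $(a,b_Y]$ does not collapse to a point during the reductions, is the principal technical difficulty in making the implication $P_n + N_{\leq n-1}\Rightarrow N_n$ go through cleanly.
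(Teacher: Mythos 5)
Your skeleton---dlt modification, then a component-positive MMP ending at either Picard number one (Theorem P\textsubscript{$n$}) or a Mori fiber space (restrict to a fiber and use Theorem N\textsubscript{$\leq n-1$})---matches the paper's. But there is a genuine gap in your Mori fiber space step, and it is exactly the point your last paragraph flags as the difficulty. You try to control \emph{all} coefficients of $\Gamma'(t)$ from a single MMP run and are then forced to invent a separate argument for the vertical components; the two ray game of Proposition \ref{prop: two ray game} will not do that job. That proposition is tailored to the proof of Proposition \ref{prop:not bounded}: it requires a $\Qq$-factorial model of Picard number exactly two, a klt pair, two designated divisors $E_1,E_2$ whose coefficients equal $1$ at $t=a$, and a chain of flips connecting two such models. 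A vertical component of $\Gamma'(t)$ over $Z$ carries none of this structure, so the ``separate argument'' is not bookkeeping---it is missing.

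The fix, which is the paper's actual argument, is to control one coefficient at a time. Fix the summand $w(t)\Delta$ of $\Delta(t)$ whose coefficient you want to bound and run the MMP for $K_Y+F+\tilde\Delta(t)-w(t)\tilde\Delta\equiv -w(t)\tilde\Delta$, i.e.\ an MMP every step of which is $\tilde\Delta$-positive. Then: (i) if a component $S_k$ of $F$ is contracted at some step, the contracted curves meet $\tilde\Delta_k$, so adjunction (Proposition \ref{prop: adjunction}) onto $S_k$ produces a numerically trivial pair of dimension $n-1$ whose boundary still carries $w(t)$, and Theorem N\textsubscript{$\leq n-1$} applies; (ii) if you reach a Mori fiber space $Y'\to Z$ with $\dim Z>0$, the $\tilde\Delta$-positivity of the final contraction forces $\tilde\Delta'$ to be horizontal, so $w(t)\tilde\Delta'|_{Y'_z}\neq 0$ on a general fiber and Theorem N\textsubscript{$\leq n-1$} bounds $w(t)$---there is no vertical case for the coefficient you chose; (iii) if $Z$ is a point you land in Picard number one, but you must still be careful there: the MMP preserves dlt-ness only of the auxiliary pair, so $(Y',F'+\tilde\Delta'(t))$ is merely lc and Theorem P\textsubscript{$n$} does not apply directly when $F'\neq 0$. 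In that case Picard number one forces $F'$ to meet $\tilde\Delta'$, and you again conclude by adjunction and Theorem N\textsubscript{$\leq n-1$}; only when $F'=0$ (hence $F=0$ and the pair is klt, klt-ness being preserved because $K_Y+F+\tilde\Delta(t)\equiv 0$) do you invoke Theorem P\textsubscript{$n$}. Since the summand $w(t)\Delta$ was arbitrary, repeating this for each component finishes the proof without ever touching the two ray game.
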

\begin{proof}
	Suppose that $(X, \Delta(t))$ satisfies the conditions of Theorem \hyperlink{thm: N}{N\textsubscript{$n$}}. By Proposition \ref{prop: common dlt}, there exists a common dlt modification $\phi: Y\to X$ for $t\in(a,b_X)$, such that
	\begin{equation*}
	K_{Y}+F+\tilde\Delta(t)=\phi^{*}(K_{X}+\Delta(t)),
	\end{equation*}
	where $F$ is a reduced exceptional divisor and $\tilde\Delta(t)$ is the strict transform of $\Delta(t)$. We may assume $\tilde\Delta(t)\neq0$. Let $w(t)\Delta$ be a summand of $\Delta(t)$ with $w(t) \in \Dd(\Ff)$, and let $w(t)\tilde \Delta$ be its strict transform on $Y$.

	\medskip
	
	Since $K_{X}+\Delta(t)$ is numerically trivial, 
	\begin{equation*}
	K_{Y}+F+\tilde\Delta(t) - w(t)\tilde\Delta \equiv -w(t)\tilde\Delta
	\end{equation*}
	is not pseudo-effective for $t\in(a,b_X)$. Let $Y_1:=Y\dashrightarrow Y_2\dashrightarrow Y_3\dashrightarrow\cdots$ be the sequence of a $(K_Y+F+\tilde\Delta(t)-w(t)\tilde\Delta)$-MMP for some $t\in (a,b_X)$, then it is $\tilde\Delta$-positive. 
	\medskip
	
	Suppose that some component $S$ of $F$ is contracted in some step $Y_k\dashrightarrow Y_{k+1}$ of the MMP. Let $F_k$, $S_k$, $\tilde\Delta_k(t)$ and $\tilde\Delta_k$ be the strict transforms of $F$, $S$, $\tilde\Delta(t)$ and $\tilde{\Delta}$ on $Y_k$ respectively. Then $S_k$ intersects $\tilde\Delta_k$. By the adjucntion formula (Proposition \ref{prop: adjunction}) and Lemma \ref{le: properties of derived set}, 
	\[
	(K_{Y_k}+F_k+\tilde\Delta_k(t))|_{S_k}=K_{S_k}+\Theta_k(t),
	\]
	where $(S_k,\Theta_k(t))$ is dlt for $t\in(a,b_X)$, and $\Theta_k(t)\in\Dd(\Dd(\Ff))=\Dd(\Ff)$. Since $\dim S_k<\dim X$, by Theorem \hyperlink{thm: N}{N\textsubscript{$\leq n-1$}}, there exists a finite $D_1\subseteq \Dd(\Ff)$, such that $\Theta_k(t)\in D_1$. By Lemma \ref{le: properties of derived set}(3), $w(t)$ belongs to the finite set $\Dd^{-1}(\Dd(\Ff),D_1)$. Hence we may assume that no component of $F$ is contracted in the MMP. 
	
		\medskip
	
	We may run a $(K_Y+F+\tilde\Delta(c)-w(c)\tilde\Delta)$-MMP with scaling of an ample divisor for any $c\in(a,b_X)$, and reach a Mori fiber space (for a reference, see \cite[Theorem 2.15]{Loh13}), $\pi: Y' \to Z$. Since each step of the MMP is $\tilde \Delta$-positive, the final Mori fiber space can be assumed to be the same for all $c$. Moreover, $\tilde \Delta'$ intersects any general fiber of $\pi$, where $\tilde\Delta'$ is the strict transform of $\tilde\Delta$ on $Y'$. Now if $Z$ is not a point, then for the general fiber ${Y'_z}$ of $\pi$, we have
	\begin{equation*}
	K_{Y'_z}+F'|_{Y'_z}+{\tilde\Delta'(t)}|_{Y'_z} \equiv 0,
	\end{equation*}
	where $F'$ and $\tilde\Delta'(t)$ are the strict transforms of $F$ and $\tilde{\Delta(t)}$ on $Y'$ respectively. 
	Moreover, ${\tilde\Delta'(t)}|_{Y'_z}\in \Dd(\Ff)$, and $w(t)\tilde\Delta'|_{Y'_z}$ is nonzero. By Theorem \hyperlink{thm: N}{N\textsubscript{$\leq n-1$}}, $w(t)\in D_1$. If $Z$ is a point, then $Y'$ has Picard number one. If $F'\neq 0$, then $F'$ intersects $\tilde{\Delta}'$. We are done again by the adjucntion formula (Proposition \ref{prop: adjunction}), Lemma \ref{le: properties of derived set}, and Theorem \hyperlink{thm: N}{N\textsubscript{$\leq n-1$}}.
	If $F'=0$, then $F=0$ and $(Y, F+\tilde\Delta(t))$ is klt. Thus $(Y', F'+\tilde\Delta'(t))$ is also klt as $K_Y+ F+\tilde\Delta(t)\equiv 0$. The proposition follows from Theorem \hyperlink{thm: P}{P\textsubscript{$n$}}.
\end{proof}

\begin{proposition}\label{prop: bounded implies thm P}
	Let $\epsilon>0$ be a fixed real number. If we further assume that $X$ is $\epsilon$-lc in Theorem \hyperlink{thm: P}{P\textsubscript{$n$}}, then Theorem \hyperlink{thm: P}{P\textsubscript{$n$}} holds. In particular, both Theorem \hyperlink{thm: N}{N\textsubscript{$1$}} and Theorem \hyperlink{thm: P}{P\textsubscript{$1$}} hold.
\end{proposition}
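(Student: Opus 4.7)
The plan is to combine the boundedness coming from the Birkar--Borisov--Alexeev--Borisov Theorem (Theorem \ref{thm: BAB}) with the Global ACC of \cite[Theorem 1.5]{HMX14}, converting the DCC data at $t=a$ into finiteness of the whole linear functional divisor $\Delta(t)$. Since each such $X$ is an $\epsilon$-lc Fano variety of dimension $n$, Theorem \ref{thm: BAB} yields a bounded family $\pi\colon Z\to T$ containing all admissible $X$. After passing to a subsequence, stratifying $T$, and choosing a relatively very ample line bundle $\Ll$ on $Z/T$, one may assume that the intersection numbers $(\Ll|_X)^n$ and $(-K_X)\cdot(\Ll|_X)^{n-1}$ are constant positive rationals. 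Since $\rho(X)=1$, each prime component $D_i$ of $\Delta(t)$ is numerically proportional to $\Ll|_X$, say $D_i\equiv r_i\Ll|_X$, with $r_i$ bounded below by a positive constant depending only on the bounded family. The hypothesis $K_X+\Delta(t)\equiv 0$ then becomes the numerical identity
\[
\sum_i c_i(t)\,r_i \;=\; r \qquad \text{for all } t\in[a,b],
\]
for a fixed constant $r>0$. Evaluating at $t=a$ and using that $\Dd(\Ff)|_a$ is a DCC set (by the remark after Lemma \ref{le: properties of derived set}), a standard DCC-sum argument---equivalently, Global ACC \cite[Theorem 1.5]{HMX14} applied to the lc Calabi--Yau pair $(X,\Delta(a))$---forces both the number of components and the tuple $(c_i(a))$ into a finite set.

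To upgrade pointwise finiteness at $t=a$ to finiteness of the linear functions $c_i(t)$, I would use the explicit presentation $c_i(t)=\frac{m_i-1+\sum_j n_{ij}f_{ij}(t)}{m_i}$ with $m_i,n_{ij}\in\Nn$ and $f_{ij}\in\Ff$. The identity $\sum_i c_i(t)r_i=r$ holding in $t$ yields the companion slope constraint $\sum_i c_i'\,r_i=0$; combined with the DCC of $\Ff|_a\cup\Ff|_b$ and the already-established bound on the number of components, this forces the integers $m_i,n_{ij}$ and the generating functions $f_{ij}\in\Ff$ into finite sets. This simultaneously produces a finite set $D_0\subseteq\Dd(\Ff)$ with $\Delta(t)\in D_0$ and shows that $\Dd^{-1}(D_0)$ is finite in $\Ff$. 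The main obstacle lies precisely in this last step: $\Dd(\Ff)|_b$ need not be DCC (by the example in the remark after Lemma \ref{le: properties of derived set}), so a direct endpoint argument at $t=b$ symmetric to the one at $t=a$ is not available, and one must instead exploit the internal ``$\tfrac{m-1+v}{m}$'' structure of $\Dd(\Ff)$ together with the identity on the whole interval $[a,b]$.

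Finally, the ``in particular'' statement follows by specialisation to dimension one. A $\Qq$-factorial Fano curve with Picard number one is $\Pp^1$, which is $1$-lc, so Theorem \hyperlink{thm: P}{P\textsubscript{$1$}} is an instance of the first part with $\epsilon=1$. For Theorem \hyperlink{thm: N}{N\textsubscript{$1$}}, a smooth projective curve with $K_X+\Delta(t)\equiv 0$ and $\Delta(t)\ge 0$ is either an elliptic curve, forcing $\Delta(t)\equiv 0$, or isomorphic to $\Pp^1$; both cases reduce to Theorem \hyperlink{thm: P}{P\textsubscript{$1$}}.
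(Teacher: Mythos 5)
Your setup coincides with the paper's: boundedness via Theorem \ref{thm: BAB}, a fixed polarization (the paper uses a general curve $C_X$ in the smooth locus with $rK_X\cdot C_X$ in a finite set, which is your $\Ll|_X$ in disguise), and the resulting numerical identity $\sum_i c_i(t)r_i=r$ on all of $[a,b]$, from which the number of components and the multiplicities $r_i$ are bounded. (Minor point: this bound should be read off at an interior point such as $t=\tfrac{a+b}{2}$, where every nonzero coefficient in $\Dd(\Ff)$ is at least $\min\{\tfrac12,\tfrac{\tau}{2}\}$; at $t=a$ a coefficient of the form $\tfrac{m-1+v(t)}{m}$ with $m=1$ may vanish, so evaluation at $a$ alone does not control the number of components.)

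However, the step you yourself flag as ``the main obstacle'' is exactly where the content of the proposition lies, and you do not carry it out. The assertion that the slope constraint $\sum_i c_i'r_i=0$ together with DCC at the endpoints ``forces the integers $m_i,n_{ij}$ and the generating functions $f_{ij}$ into finite sets'' is left unjustified, and as stated it is false: $m_i$ need not be bounded, since $w\equiv 1=\tfrac{m-1+1}{m}$ occurs for every $m$. The correct dichotomy, which is the heart of the paper's proof, is that for each slot $j$ either $m_{i,j}$ stays bounded or $w_{i,j}\equiv 1$. The paper obtains this by splitting $\sum_j r_jw_{i,j}(t)=K$ into a bounded-$m$ part $P_i(t)$ and an unbounded-$m$ part $Q_i(t)$, using the DCC of $\Ff|_a\cup\Ff|_b$ to pass to a subsequence on which each $v_{i,j}(t)$ (hence $P_i(t)$) is non-decreasing in $i$ on all of $[a,b]$, and then choosing an interior point $c'$ with $w_{1,j}(c')<1$ so that $w_{i',j}(c')\ge\tfrac{m_{i',j}-1}{m_{i',j}}>w_{1,j}(c')$ for $i'\gg1$ forces $Q_{i'}(c')>Q_1(c')$ while $P_{i'}(c')\ge P_1(c')$, contradicting $P_{i'}(c')+Q_{i'}(c')=P_1(c')+Q_1(c')$. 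Without this (or an equivalent) argument neither the finiteness of $D_0$ nor the finiteness of $\Dd^{-1}(D_0)$ is established; the latter is deduced in the paper precisely from the dichotomy ``$m$ bounded or numerator $\equiv 1$''. The dimension-one deduction at the end of your proposal is fine.
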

\begin{proof}
	By Theorem \ref{thm: BAB}, all the $X$ satisfying the conditions of Proposition \ref{prop: bounded implies thm P} belong to a bounded family. 
	
	\medskip
	
By \cite[Lemma 2.24]{Bir16a}, there exists a positive integer $r$ such that $rK_{X}$ is Cartier for each $X$. There exists a general curve $C_X$ in the smooth locus of $X$, such that $rK_X \cdot C_X\in \Zz$ belong to a finite set. 

\medskip

	It is enough to prove the proposition for any sequence $\{(X_i, \Delta_i(t))\}_{i\in\Nn}$. Let $\Delta_i(t) = \sum_{j=1}^{s_i} w_{i,j}(t)\Delta_{i,j}$ where
	\begin{equation}\label{eq: w}
	w_{i,j}(t) = \frac{m_{i,j} -1 + v_{i,j}(t)}{m_{i,j}}, \quad v_{i,j}(t) \in \Ff_+.
	\end{equation}
	Note that $\Delta_{i,j} \cdot C
	_{X_i} = r_{i,j}$ is a positive integer, and
	\begin{equation*}
	0 = K_{X_i} \cdot C_{X_i} + \sum_{j=1}^{s_i} w_{i,j}(t) (\Delta_{i,j} \cdot C_{X_i}).
	\end{equation*}
	Since there are finitely many possibilities for $K_{X_i} \cdot C_{X_i}$, passing to a subsequence, we may assume that $-K_{X_i} \cdot C_{X_i}=K$ for any $i$. Thus
	\begin{equation}\label{eq: intersection with C}
	K = \sum_{j=1}^{s_i} r_{i,j} w_{i,j}(t), \quad r_{i,j} \in \Zz_{> 0}.
	\end{equation}
	
	Let $\tau = \min\{\Ff|_{a}\cup \Ff|_{b}\backslash\{0\}\}$, $c = \frac{a+b}{2}$. By linearity, $w_{i,j}(c)\geq \min\{\frac{1}{2},\frac \tau 2\}$. Comparing with \eqref{eq: intersection with C}, we see that $s_i$ and $r_{i,j}$ are all bounded. Hence passing to a subsequence, we may assume $s_i = s$ and $r_{i,j}=r_j$ for all $i$. However, $m_{i,j}$ in \eqref{eq: w} may not be bounded. Passing to a subsequence and possibly switching the order, we may assume that there exists $0 \leq s' \leq s$, such that $m_{i,1},\ldots,m_{i,s'}$ are bounded, and $m_{i,s'+1},\ldots,m_{i,s}$ are unbounded. We may further assume that $m_{i,j} = m_j$ for each $j\le s'$, and $\{m_{i,j}\}_{i=1}^{\infty}$ is strictly increasing for each $s\ge j>s'$. 
	
	\medskip
	
	If $\{v_{i,j}(t)\}$ is not a finite set, then passing to a subsequence, we may assume that
	\begin{equation}\label{eq: w are different}
	\bigcup_{i=1}^l\{v_{i,j}(t) \mid j=1,2,\ldots,s\}\subsetneq \bigcup_{i=1}^{l+1}\{v_{i,j}(t) \mid j=1,2,\ldots,s\}
	\end{equation} for each $l$.
	
	\medskip
	
	Since $v_{i,j}(a),v_{i,j}(b)$ belong to the DCC set $\sum \Ff|_a\cup \sum\Ff|_b$, passing to a subsequence, we may assume that for fixed $j$, both $\{v_{i,j}(a)\}_{i\in \Nn}$ and $\{v_{i,j}(b)\}_{i\in \Nn}$ are non-decreasing sequences. Thus, $v_{i_2,j}(t)\ge v_{i_1,j}(t)$ for $t\in[a,b]$ and any $i_2\ge i_1$, and $w_{i_2,j}(t)\ge w_{i_1,j}(t)$ for $t\in[a,b]$ and any $i_2\ge i_1$, $j\le s'$.
	
	\medskip
	
	For a fixed $j$, if $w_{i,j}(t)\equiv1$ for infinitely many $i$, then passing to a subsequence, we may assume that $w_{i,j}(t)\equiv1$ for any $i$. Subtracting these terms from \eqref{eq: intersection with C}, we may assume that $w_{i,j}(t)$ is not identically equal to $1$ for any $i, j$.
	
	\medskip
	
	When $s =s'$, that is, $m_{i,j}$ are bounded for all $j$, comparing term by term in \eqref{eq: intersection with C}, we have $v_{i_2,j}(t)=v_{i_1, j}(t)$ for any $i_2\ge i_1$. This contradicts \eqref{eq: w are different}.
	
	\medskip
	
	When $s'<s$, split \eqref{eq: intersection with C} into two parts,
	\begin{equation}\label{eq: split}
	\begin{split}
	&P_i(t) \coloneqq \sum_{j = 1}^{s'}  r_{j} w_{i,j}(t) \text{~and~} Q_i(t) \coloneqq \sum_{j = s'+1}^{s} r_{j} w_{i,j}(t).
	\end{split}
	\end{equation}
	
	Since $w_{i,j}(t)$ is not identically equal to $1$, we may choose $c'\in(a, b)$ such that $w_{1,j}(c')<1$ for any $j$.
	Since $\{m_{i,j}\}_{i\in\Nn}$ is unbounded for each $s'+1\leq j \leq s$, there exists $i'$ such that
	\begin{equation*}
	w_{i',j}(c') \ge\frac{m_{i',j} -1}{m_{i',j}}> w_{1,j}(c')
	\end{equation*}
	for any $s'+1\leq j \leq s$.
    
    \medskip
    	
	Thus $Q_{i'}(c')>Q_{1}(c')$. Since $w_{i',j}(t)\ge w_{1,j}(t)$ for $t\in[a,b]$, $w_{i',j}(c')\ge w_{1,j}(c')$, and thus $P_{i'}(c')\ge P_{1}(c')$. This is a contradiction since $P_{i'}(c') + Q_{i'}(c') =P_{1}(c') + Q_{1}(c')=K$. Hence $v_{i,j}(t)$ belongs to a finite set. Moreover, for any $j$, either $m_{i,j}$ is bounded or $w_{i,j}(t)\equiv 1$. Thus $w_{i,j}(t)$ belongs to a finite set $D_0$.

	\medskip
	
	When $n=1$, it suffices to prove Theorem \hyperlink{thm: N}{N\textsubscript{$1$}}. We may assume $\Delta(t)\neq 0$. Since $K_X+\Delta(t) \equiv 0$ and $X$ is normal, $X$ is $\Pp^1$. Hence Theorem \hyperlink{thm: N}{N\textsubscript{$1$}} holds.	
\end{proof}

\begin{proposition}\label{prop:not bounded}
	Theorem \hyperlink{thm: N}{N\textsubscript{$\leq n-1$}} implies Theorem \hyperlink{thm: P}{P\textsubscript{$n$}}.
\end{proposition}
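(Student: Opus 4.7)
The plan is to argue by contradiction. Suppose Theorem~\hyperlink{thm: P}{P\textsubscript{$n$}} fails; then there exists a sequence $\{(X_i, \Delta_i(t))\}_{i\in\Nn}$ satisfying conditions (1)--(4) with the $\Delta_i(t) \in \Dd(\Ff)$ pairwise distinct. By Proposition~\ref{prop: bounded implies thm P}, if the $X_i$ are uniformly $\epsilon$-lc for some fixed $\epsilon>0$ we are already done, so after passing to a subsequence we may assume that $\mathrm{mld}(X_i) \to 0$. For each $i$, pick an exceptional prime divisor $E_i$ over $X_i$ whose log discrepancy with respect to $(X_i,0)$ tends to $0$. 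Extract $E_i$ by a $\Qq$-factorial projective birational morphism $\phi_i: Y_i \to X_i$ with $E_i$ as unique exceptional divisor; since $\rho(X_i)=1$, $Y_i$ has Picard number $2$. Writing
\[
\phi_i^{*}(K_{X_i}+\Delta_i(t)) = K_{Y_i}+\tilde\Delta_i(t)+u_i(t)E_i,
\]
we have $u_i(t)<1$ on $(a,b_{X_i}]$ but $u_i(t)\to 1$ uniformly on compact subintervals.

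Next, I would raise the coefficient of $E_i$ to $1$ and run a $(K_{Y_i}+\tilde\Delta_i(t)+E_i)$-MMP on the extremal ray not contracted by $\phi_i$, placing us in the setting of Proposition~\ref{prop: two ray game}. That proposition then produces a second divisor $E_{i,2}$ -- either on $Y_i$ (if the outer contraction is already divisorial) or on a small birational modification $Y_i'$ after flips -- together with linear-function coefficients $u_i'(t),v_i'(t)\le 1$, such that the numerically trivial pair with boundary $\tilde\Delta_i(t)+u_i'(t)E_i+v_i'(t)E_{i,2}$ (or its analogue on $Y_i'$) has a common lc center $V_i \subseteq E_i \cup E_{i,2}$ on some initial subinterval $(a,a+\delta_i)$. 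If instead the MMP terminates in a Mori fiber space, I would restrict to a general fiber and conclude by Theorem~\hyperlink{thm: N}{N\textsubscript{$\leq n-1$}} directly, exactly as in the proof of Proposition~\ref{prop: P+N implies N}.

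In the divisorial case, take a common dlt modification (Proposition~\ref{prop: common dlt}) that realizes a suitable lc place with image $V_i$ as a boundary component, and restrict via the adjunction formula (Proposition~\ref{prop: adjunction}). By Lemma~\ref{le: properties of derived set} the restricted boundary $\Theta_i(t)$ lies in $\Dd(\Dd(\Ff)) = \Dd(\Ff)$; by Lemma~\ref{le: linearity of lct} the pair $(V_i,\Theta_i(t))$ is lc on an interval of positive length; and $K_{V_i}+\Theta_i(t) \equiv 0$ with $\dim V_i \le n-1$. Theorem~\hyperlink{thm: N}{N\textsubscript{$\leq n-1$}} then yields a finite set $D_0 \subseteq \Dd(\Ff)$ containing every $\Theta_i(t)$, with $\Dd^{-1}(D_0)$ finite in $\Ff$. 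Since $\rho(X_i)=1$ forces every component of $\Delta_i(t)$ to be ample, its proper transform meets $V_i$ and therefore contributes to a coefficient of the form $\frac{m-1+\sum r_{i,j}w_{i,j}(t)}{m}$ in $\Theta_i(t)$; arguing as in Proposition~\ref{prop: bounded implies thm P}, finiteness of $\Theta_i(t)$ combined with the DCC property of $\Ff|_a\cup\Ff|_b$ forces the data $\bigl(w_{i,j}(t),m_{i,j},r_{i,j}\bigr)$ to range in a finite set, whence $\Delta_i(t)$ lies in a finite subset of $\Dd(\Ff)$, the desired contradiction.

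The main obstacle I anticipate is the passage from the ``nearly lc'' pair $(Y_i,\tilde\Delta_i(t)+u_i(t)E_i)$ with $u_i(t)\to 1$ to a genuine lc pair having $E_i$ as an lc place, in a way compatible with Proposition~\ref{prop: two ray game} (whose hypotheses ask that the coefficient equal $1$ at the left endpoint $a$): one must confirm that, after replacing $u_i(t)$ by $1$, the MMP on the second ray still runs, still terminates with either a divisorial contraction or a Mori fiber space, and still yields a shared lc center in $E_i \cup E_{i,2}$. A subsidiary difficulty is ensuring that the successive interval-shrinkings produced by Lemma~\ref{le: linearity of lct} and by Proposition~\ref{prop: common dlt} leave a positive-length common interval so that condition (3) of Theorem~\hyperlink{thm: N}{N\textsubscript{$\leq n-1$}} is satisfied.
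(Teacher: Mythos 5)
Your opening moves match the paper's: argue by contradiction, invoke Proposition \ref{prop: bounded implies thm P} to assume the discrepancies of the $X_i$ tend to $-1$, and extract a divisor $E_i$ of small log discrepancy to get $Y_i\to X_i$ with $\rho(Y_i)=2$. (You do skip the paper's Case A, where some component of $\Delta_i(t)$ already has coefficient $1$, but that case is easy.) After that point, however, the architecture of your argument has a genuine gap, and the obstacle you yourself flag at the end is precisely where it breaks.

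First, you never establish that $u_i(a)=1$; you only have $u_i(t)\to 1$. The paper proves $a_i(a)=1$ exactly, by observing that $\Dd(\Ff)|_a=D(\Ff)|_a$ is a DCC set and applying the Global ACC of \cite[Theorem 1.5]{HMX14} to the numerically trivial pairs at the single parameter value $t=a$; without this, none of the later monotonicity statements can be set up. Second, and more seriously, ``raising the coefficient of $E_i$ to $1$'' destroys numerical triviality for $t>a$ (since $u_i(t)<1$ there and $E_i\not\equiv 0$ on $Y_i$), so the pair you hand to the two ray game and ultimately to Theorem \hyperlink{thm: N}{N\textsubscript{$\leq n-1$}} is not numerically trivial, and condition (4) of Theorem N fails after adjunction. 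Third, Proposition \ref{prop: two ray game} does not manufacture a second divisor: it presupposes two exceptional divisors $E_1,E_2$ with $u(a)=v(a)=1$ and divisorial contractions at both ends of a flip sequence. In the paper it is only used in a sub-case of Step 2, after a \emph{second} divisor $B_i$ has been extracted by invoking Proposition \ref{prop: bounded implies thm P} a second time. The paper's actual mechanism for creating a common lc center while preserving $K+\Delta\equiv 0$ is Step 3: return to Picard number $1$ via an $A_i$-positive MMP (so that $A_i$ and $B_i$ both live on $X_i$), use $A_i\sim_{\Rr}\lambda_i B_i$ to perturb the coefficients by $\zeta_i(t)(A_i-\lambda_i B_i)$ inside the numerically trivial class, and then run a delicate subsequence argument showing both perturbed coefficient functions $a_i''(t), b_i''(t)$ are strictly increasing in $i$ --- this monotonicity is what contradicts the finiteness supplied by Theorem \hyperlink{thm: N}{N\textsubscript{$\leq n-1$}}. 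None of this two-divisor bookkeeping appears in your proposal, and a single extracted divisor cannot replace it, because on a Picard-number-one base there is no numerically trivial direction in which to increase one coefficient alone. Finally, your appeal to Theorem N with the enlarged coefficient set implicitly requires $\{u_i(b)\}$ to be DCC, which is exactly what the paper's careful extraction of increasing subsequences guarantees and what you have not arranged.
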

\begin{proof}
	It suffices to prove Theorem \hyperlink{thm: P}{P\textsubscript{$n$}} for any sequence $\{(X_i, \Delta_i(t))\}_{i\in\Nn}$. By Proposition \ref{prop: bounded implies thm P}, we may assume $n\ge2$. Passing to a subsequence, we may assume that the discrepancy of $X_i$ is less than $-1+\frac{1}{2i}$ for any $i$. 
		
	\medskip
	
		\noindent{\bf Case A.} There are infinitely many $i$, such that $\Delta_i(t)$ contains an irreducible component $S_i$ whose coefficient is $1$ for all $t$. By the adjunction formula and Lemma \ref{le: properties of derived set},
	\[
	(K_{X_i}+\Delta_i(t))|_{S_i}=K_{S_i}+\Theta_i(t) {\rm~on~}[a, b_{X_i}],
	\]
	with $\Theta_i(t) \in \Dd(\Ff)$. Since the Picard number of $X_i$ is $1$, $S_i$ intersects any nonzero divisor. By Theorem \hyperlink{thm: N}{N\textsubscript{$\leq n-1$}}, there is a finite set $D'_0 \subseteq \Dd(\Ff)$ such that $\Theta_i(t) \in D'_0$. Thus $\Delta_i(t) \in \Dd^{-1}(\Dd(\Ff), D'_0)$. By Lemma \ref{le: properties of derived set}(3), $\Dd^{-1}(\Dd(\Ff), D'_0)$ is a finite set, and this shows Theorem \hyperlink{thm: P}{P\textsubscript{$n$}}.

	\medskip
	
	\noindent{\bf Case B.} There is no component of $\Delta_i(t)$ whose coefficient is $1$ for all $t$. Since $(X_i, \Delta_i(t))$ is dlt for $t\in(a,b_{X_i}]$, possibly shrinking $(a, b_{X_i}]$, we may assume that it is klt for $t\in(a,b_{X_i}]$. In this case, if the coefficients of $\Delta_i(t), i\in\Zz_{>0}$ do not belong to a finite set, then we have the following claim.
	
	\medskip
	
	\noindent{\bf Claim.} Under the assumptions of Case B, assume that the set of coefficients of $\Delta_i(t), i\in\Zz_{>0}$ is not finite. For any $m \in \Zz_{\geq 0}$, possibly replacing $\{(X_i, \Delta_i(t))\}_{i\in\Zz_{>0}}$ and $\Ff$, we may assume that the following property holds for any $i$.
	\begin{enumerate}
	\item $(X_i, \Delta_i(t))$ is klt for $t\in (a,b_{X_i}]$,
	\item $X_i$ is $\Qq$-factorial with $\rho(X_i)=1$,
	\item the discrepancy of $X_i$ is less than $-1+\frac{1}{2i}$,
	\item $K_{X_i}+\Delta_i(t)\equiv 0$ for $t\in [a,b_{X_i}]$,
	\item  there is a set $\mathfrak{D}_i$ consisting of prime divisors on $X_i$ such that $|\mathfrak{D}_i|=m$, and 
	\item for any $A_i\in \mathfrak{D}_i$, $a(A_i,X_i,\Delta_i(a))=-1$. 
	\end{enumerate}
	
	Assuming this claim, we finish the proof. Choose $m=n+2$, then $(X_i, \Delta_i(a))$ is $\Qq$-factorial lc with $\rho(X_i)=1$, $K_{X_i}+\Delta_i(a)\equiv 0$, and there are $n+1$ coefficients of $\Delta_i(a)$ equal to 1 for any $i$. This contradicts \cite[18.24]{Fli92}, see also \cite[Proposition 5.1]{BZ16}.
\end{proof}
	
\begin{proof}[Proof of the Claim] We prove the claim by induction on $m$. Suppose that the claim holds for $m=m_0\ge0$, and the discrepancy of $X_i$ is less than $-1+\frac{1}{2i}$. For each $i$, by \cite[Corollary 1.4.3]{BCHM10}, there exists a birational morphism $\phi_i: Y_i\to X_i$, such that 
	\[
	K_{Y_i}+a_iA_i=\phi_i^{*}K_{X_i},
	\]
	where $Y_i$ is $\Qq$-factorial, $A_i$ is the only exceptional divisor of $\phi_i$, and $a_i=-a(A_i, X)>1-\frac{1}{2i}>\frac{1}{2}$. Passing to a subsequence, we may assume that $\{a_i\}_{i\in\Nn}$ is an increasing sequence. Let
	\begin{equation*}
	K_{Y_i}+a_i(t)A_i+\tilde\Delta_i(t)=\phi_i^{*}(K_{X_i}+\Delta_i(t)),
	\end{equation*}
	where $a_i(t)$ is a linear function and $\tilde\Delta_i(t)$ is the strict transform of $\Delta_i(t)$ on $Y_i$. Since $\Delta_i(t)$ is effective for $t\in[a, b]$, $a_i(t)\ge a_i$ for $t\in[a, b]$. Note that $a_i(t)<1$ for $t\in(a, b_{X_i}]$ as $(X_i, \Delta_i(t))$ is klt.
	
	\medskip
	
Now we discuss what happens at the end point $t=a$. If $a_i(a)<1$ for infinitely many $i$, then passing to a subsequence, we may assume that $a_i(a)<1$ for any $i$. Moreover, for any $a_i(a)<1$, there exsits $j$, such that $a_j>a_i(a)$, and thus $a_j(a)\ge a_j>a_i(a)$. Passing to a subsequence, we may further assume that $\{a_i(a)\}_{i\in \Nn}$ is strictly increasing. Applying \cite[Proposition 3.4.1]{HMX14} to the set $\Ff|_a$, we know that $\mathcal{D}(\Ff)|_{a}=D(\Ff)|_a$ is a DCC set. Thus the coefficients of $a_i(a)A_i +\tilde\Delta_i(a)$ belong to a DCC set. Since $K_{X_i}+ a_i(a) A_i + \tilde\Delta_i(a) \equiv 0$, by \cite[Theorem 1.5]{HMX14}, their coefficients belong to a finite set. This contradicts to the assumption that $\{a_i(a)\}_{i\in\Nn}$ is strictly increasing.

\medskip

	So we may assume $a_i(a)=1$ for any $i$. Since $a_i(t)<1$ for $t\in(a, b_{X_i}]$, $a_i(t)$ is a decreasing linear function. Recall that $a_i(t) \geq a_i$ for $t\in[a,b]$, there exists $j$ such that $a_j(b)\geq a_j > a_i(b)$ as $a_i(b)<1$. Thus, passing to a subsequence, we may assume that $a_{i_1}(t)<a_{i_2}(t)$ for any $i_1<i_2$ for $t\in(a,b]$.
	
	\medskip
	
    Since
	\begin{equation*}
	K_{Y_i}+(a_i(t)-\frac{1}{2})A_i+\tilde\Delta_i(t) \equiv -\frac{1}{2} A_i
	\end{equation*}
	is not pseudo-effective for $t\in(a,b_{X_i}]$, by \cite[Corollary 1.3.3]{BCHM10}, we may run a $(K_{Y_i}+(a_i(c)-\frac{1}{2})A_i+\tilde\Delta_i(c))$-MMP with scaling of an ample divisor for any $c \in (a,b_{X_i}]$, 
	$$Y_{i,1}:=Y_{i}\dashrightarrow  Y_{i,2}\dashrightarrow\cdots,$$
	and reach a Mori fiber space $\pi_i: Y_i'\to Z_i$. Since each step of the MMP is $A_i$-positive, the final Mori fiber space can be assumed to be the same for any $c$. The general fiber of $\pi_i$ intersects $A_i'$, where $A_i'$ is the strict transform of $A_i$ on $Y_i'$.
	
	\medskip
	
		If there exists a prime divisor $B_i$ on $Y_i$, such that $\phi_i(B_i)\in \mathfrak{D_i}$, and $B_i$ is contracted in some step $Y_{i,k} \dashrightarrow Y_{i,k+1}$ of the MMP, then $Y_i \dashrightarrow Y_{i,k}$ is a sequence of flips since $A_i$ cannot be contracted and the Picard number of $Y_i$ is 2. Let $b_i(t)=a(B_i,X_i,\Delta_i(t))$, $\Delta_i'(t)=\tilde{\Delta_i}(t)-b_i(t)B_i$. Apply Proposition \ref{prop: two ray game} to $Y_i \dashrightarrow Y_{i,k}$. Possibly switching the role of $Y_i$ and $Y_{i,k}$, and shrinking $(a, b_{X_i}]$, we may assume that $B_i$ intersects the general fiber of $A_i\to \phi_i(A_i)^{\nu}$, there exist linear functions $a_i'(t),b_i'(t)$ such that $Y_i\to X_i$ is $(K_{Y_i}+a_i'(t)A_i+b_i'(t)B_i+\Delta_i'(t))$-trivial, 
	$({Y_i},a_i'(t)A_i+b_i'(t)B_i+\Delta_i'(t))$ is lc for any $t\in[a,b_{X_i}]$, and there exists a common lc center contained in $A_i\cup B_i$, where $\phi_i(A_i)^{\nu}$ is the normalization of $\phi_i(A_i)$, $1\ge a_i'(t)\ge a_i(t)$, $1> b_i'(t)>b_i(t)$ for $t\in(a, b_{X_i}]$ and $a_i'(a)=b_i'(a)=1$. Passing to a subsequence, we may assume that for any $i_1<i_2$, $a_{i_1}'(t)\le a_{i_2}'(t)\le 1$, $b_{i_1}'(t)< b_{i_2}'(t)<1$ for $t\in(a, b]$. 
	
	\medskip
	
	Suppose that there are infinitely many $i$ such that $a_i'(t)\equiv 1$. Passing to a subsequence, we may assume that $a_i'(t)\equiv 1$ for any $i$. By Proposition \ref{prop: common dlt}, we may take a common dlt modification $\chi_i: W_i\to Y_i$ of $K_{Y_i}+a_i'(t)A_i+B_i+\Delta_i'(t)$ for $t\in(a, b_{X_i})$. Let $\tilde{A}_i$ and $\tilde{B}_i$ be the strict transforms of $A_i$ and $B_i$ on $W_i$ respectively. Recall that $B_i$ intersects the general fiber of $A_i\to \phi_i(A_i)^{\nu}$, by Proposition \ref{prop: common dlt}(4), either $\chi_i^{-1}B_i=\tilde{B}_i$ or there is an exceptional prime divisor $E_i$, such that $\tilde{B}_i$ intersects the general fiber of $\chi_i|_{E_i}:E_i\to \chi_i(E_i)^{\nu}$. In the former case, $\tilde{B}_i$ intersects the general fiber of $\tilde{A_i}\to \phi(A_i)^{\nu}$. Recall that $W_i\to X_i$ is $\chi_i^{*}(K_{Y_i}+a_i'(t)A_i+B_i+\Delta_i'(t))$-trivial, applying the adjunction formula to $\chi_i^{*}(K_{Y_i}+a_i'(t)A_i+B_i+\Delta_i'(t))$
	on $\tilde{A_i}$ and restricting to the general fiber of $\tilde{A_i}\to \phi_i(A_i)^{\nu}$, we know that $b_{i}'(t)$ belongs to a finite set by Theorem \hyperlink{thm: N}{N\textsubscript{$\leq n-1$}} (applied to the set $\Ff\cup\{b_i'(t)\}_{i\in\Nn}$) and Lemma \ref{le: properties of derived set}(3), a contradiction. In the latter case, applying the adjunction formula to $\chi_i^{*}(K_{Y_i}+A_i+b_i'(t)B_i+\Delta_i'(t))$ on $E_i$ and restricting to the general fiber of $E_i
	\to \chi_i(E_i)^{\nu}$, we know that $b_{i}'(t)$ belongs to a finite set by Theorem \hyperlink{thm: N}{N\textsubscript{$\leq n-1$}} (applied to the set $\Ff\cup\{b_i'(t)\}_{i\in\Nn}$) and Lemma \ref{le: properties of derived set}(3), also a contradiction.
	
	\medskip
	
	Hence, passing to a subsequence, we may assume that $a_i'(t)<1$ for any $i$. Possibly switching the role of $A_i$ and $B_i$, by Proposition \ref{prop: common dlt}, we may take a common dlt modification $\chi_i: W_i\to Y_i$ of $K_{Y_i}+a_i'(t)A_i+b_i'(t)B_i+\Delta_i'(t)$ for $t\in(a, b_{X_i})$, such that there exists an exceptional prime divisor $E_i$, such that the strict transform of $B_i$ intersects the general fiber of $E_i
	\to \chi_i(E_i)^{\nu}$. Applying the adjunction formula to $\chi_i^{*}(K_{Y_i}+a_i'(t)A_i+b_i'(t)B_i+\Delta_i'(t))$ on $E_i$ and restricting to the general fiber of $E_i
	\to \chi_i(E_i)^{\nu}$, we know that $b_{i}'(t)$ belongs to a finite set by Theorem \hyperlink{thm: N}{N\textsubscript{$\leq n-1$}} (applied to the set $\Ff\cup\{a_i'(t)\}_{i\in\Nn}\cup\{b_i'(t)\}_{i\in\Nn}$) and Lemma \ref{le: properties of derived set}(3), a contradiction.
	
	\medskip
	
	Thus we reach a Mori fiber space $\pi_i: Y_i'\to Z_i$ such that $A_i$ and all the prime divisors in $\mathfrak{D}_i$ are not contracted in $Y_i\dashrightarrow Y_i'$.
	
	\medskip
	
	If $\dim(Z_i)>0$, then we get a contradiction by restricting to the general fiber of $\pi_i$, and Theorem \hyperlink{thm: N}{N\textsubscript{$\leq n-1$}} (applied to the set $\Ff\cup\{a_i(t)\}_{i\in\Nn}$).
	
	\medskip
	
	If $Z_i$ is a point, then the Picard number of $Y_i'$ is $1$. Replace $X_i$ by $Y_i'$, $\Delta_i(t)$ by the strict transform of $a_i(t)A_i+\tilde{\Delta_i}(t)$ on $Y_i'$, $\mathfrak{D}_i$ by the set consisting of strict transforms of divisors in the original $\mathfrak{D}_i$ plus $A_i$, and $\Ff$ by $\Ff\cup\{a_i(t)\}_{i\in\Nn}$. Then $|\mathfrak{D}_i|=m_0+1$ and this finishes the induction.
	\end{proof}

Theorem \hyperlink{thm: N}{N} and Theorem \hyperlink{thm: P}{P} follow from the above propositions.

\begin{proof}[Proof of Theorem \hyperlink{thm: N}{N} and Theorem \hyperlink{thm: P}{P}]
	By Proposition \ref{prop: bounded implies thm P}, both Theorem \hyperlink{thm: P}{P\textsubscript{$1$}} and Theorem \hyperlink{thm: N}{N\textsubscript{$1$}} hold. By induction on the dimension, we may assume that Theorem \hyperlink{thm: N}{N\textsubscript{$\leq n-1$}} and Theorem \hyperlink{thm: P}{P\textsubscript{$\leq n-1$}} hold. By Proposition \ref{prop:not bounded}, Theorem \hyperlink{thm: P}{P\textsubscript{$n$}} holds. Then by Proposition \ref{prop: P+N implies N}, Theorem \hyperlink{thm: N}{N\textsubscript{$n$}} holds.
\end{proof}

\begin{proof}[Proof of Theorem \ref{thm: linear global ACC}]
	Since $(X,\Delta(t))$ is log canonical for any $t\in[a,b_X]$, $\Delta(t)\in\Dd(\Ff)\cap\Ff$, Theorem \ref{thm: linear global ACC} follows from Theorem \hyperlink{thm: N}{N}.
\end{proof}

\section{Proofs and remarks}\label{sec: proof of thm and cor}

\subsection{Proofs of theorems and corollaries}
\begin{proof}[Proof of Theorem \ref{thm: ACC for Local LCT-polytopes}]
	Suppose to the contrary that there exist a sequence $\{(X_i, \Delta_i;D_{i,1},\ldots,D_{i,s})\}$, $c_i\in \Rr_{>0}$, and linear functions $t_i(\lambda)$ of $\lambda$ defined on $[0,+\infty)$, such that for any $\lambda\in[0,c_i]$,
	\begin{equation*}\label{eq: t_i}
	t_i(\lambda)=\sup\{t \mid ({X_i}, \Delta_i+\sum_{j=1}^{s-1}(a_{j}\lambda+b_{j})D_{i,j}+tD_{i,s}){\rm~is~lc}\},
	\end{equation*}
	and $t_i(\lambda)<t_{i+1}(\lambda)$ for any $\lambda>0.$ In particular, all $t_i(\lambda)\ge0$ for $t\in[0,c_1]$. Set $c=c_1$.

	\medskip 
	
	Consider linear functional divisors $\Delta_i(\lambda)$ of $t$ on $[0,c]$,
	\begin{equation*}
	\Delta_i(\lambda)\coloneqq \Delta_i+\sum_{j=1}^{s-1}(a_{j}\lambda+b_{j})D_{i,j}+t_i(\lambda)D_{i,s},
	\end{equation*} 
	and let $\Ff$ be the set of coefficients of $\Delta_i(\lambda)$. We claim that both $\Ff|_0$ and $\Ff|_c$ are DCC sets. Indeed,
	\begin{enumerate}
		\item $\Ii$ is DCC,
		\item $\{b_{j}\}_{1 \leq j < s}$ and $\{a_{j}c+b_{j}\}_{1 \leq j < s}$ are finite sets,
		\item $t_i(0)= b_s$ for all $i$, and
		\item $0<t_i(c)<t_{i+1}(c)$.
	\end{enumerate}
	
	By Lemma \ref{le: linearity of lct}, $(X_i, \Delta_i(\lambda))$ has at least one common lc center contained in $\Supp D_{i,s}$ for any $\lambda\in[0, c_i]$. If some component $E_i$ of $D_{i,s}$ is a common lc center for infinitely many $i$, then for $\lambda \in [0,c_i]$,
	\begin{equation*}
	{\rm mult}_{E_i}(\Delta_i)+ \sum_{j=1}^{s-1}(a_{j}\lambda+b_{j}){\rm mult}_{E_i}(D_{i,j})+t_i(\lambda){\rm mult}_{E_i}(D_{i,s})= 1,
	\end{equation*}
	with ${\rm mult}_{E_i}(D_{i,s}) \neq 0$. By linearity, this also holds for $\lambda \in [0,c]$. For a fixed $\lambda\in (0,c)$,
	\begin{equation*}
	{\rm mult}_{E_i}(\Delta_i), \quad \sum_{j=1}^{s-1}(a_{j}\lambda+b_{j}){\rm mult}_{E_i}(D_{i,j}), \quad {\rm mult}_{E_i}(D_{i,s})
	\end{equation*}
	are all in a DCC set, thus $t_i(\lambda)$ belongs to an ACC set, a contradiction. So we may assume that any such lc center is not a component of $D_{i,s}$ for any $i$.
	
	\medskip
	
	By Proposition \ref{prop: common dlt}, there is a common dlt modification
	\[
	K_{Y_i}+\tilde\Delta_i(\lambda)+F_i=\phi_i^{*}(K_{X_i}+\Delta_i(\lambda)),
	\] 
	and there is an exceptional prime divisor $E_i$ such that the strict transform of $D_{i,s}$ intersects the general fiber of $E_i \to \phi_i(E_i)^\nu$. Applying the adjunction formula (Proposition \ref{prop: adjunction}) on $E_i$, and restricting to the general fiber $T_i$ of $E_i \to \phi_i(E_i)^\nu$, we get
	\begin{equation*}\label{eq: adjunction on E}
	\begin{aligned}
	K_{T_i} +\Theta_i(\lambda)\coloneqq&(K_{Y_i}+F_i+\tilde\Delta_i(\lambda))|_{T_i}\\
	=&\phi_i^{*}(K_{X_i}+\Delta_i(\lambda))|_{T_i} \equiv 0.
	\end{aligned}
	\end{equation*} 
	By Theorem \hyperlink{thm: N}{N\textsubscript{$\leq n-1$}} (applied to the set $\mathcal{F}=\{t_i(\lambda)\}_{i\in\Nn}\bigcup\cup_{1\leq s-1}(a_j\lambda+b_j)\Ii$) and Lemma \ref{le: properties of derived set}(3), $t_i(\lambda)$ belongs to a finite set, a contradiction. 
\end{proof}

The following lemmas are used in the proof of Theorem \ref{thm: ACC for LCT-polytopes}. Recall that the dimension of a convex set $T\subseteq \Rr^s$ is defined to be the dimension of its affine hull, which is the intersection of all affine subspaces containing $T$. For a set $S \subseteq \Rr^s$ and a point $\alpha\in \Rr^s$, let 
\begin{equation*}\label{eq: cone}
C_{\alpha}(S)\coloneqq \{\alpha+t(s-\alpha) \mid s \in S, t\in \Rr_{\geq 0}\}
\end{equation*} 
be the cone generated by $S$ with vertex $\alpha$.

\begin{lemma}\label{lem:commona ray}
Let $T_1\supseteq T_2\supseteq \cdots$ be a decreasing sequence of compact polytopes of dimension $s$ in $\Rr^s$. Suppose that $T\subseteq \cap_{i=1}^{\infty} T_i$ is a closed convex set of dimension $r<s$, then there exist a point $\alpha\in T$ and a ray $R=\{\alpha+\lambda\bm{e} \mid \lambda\ge0\}$, such that $R\cap T_i\nsubseteq T$ for any $i$. 
\end{lemma}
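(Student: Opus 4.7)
The plan is to fix a point $\alpha$ in the relative interior of $T$ and then locate a direction $\bm{e}$ lying in the tangent cone of every $T_i$ at $\alpha$ but transverse to $\operatorname{aff}(T)$; the ray $\alpha + \Rr_{\geq 0}\bm{e}$ will then satisfy the conclusion automatically.

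Fix such $\alpha$, set $V \coloneqq \operatorname{aff}(T) - \alpha$ (a linear subspace of dimension $r$), and define the tangent cones
\[
K_i \coloneqq \{\bm{e} \in \Rr^s \mid \alpha + \lambda \bm{e} \in T_i \text{ for some } \lambda > 0\}.
\]
Because each $T_i$ is a polytope, writing $T_i = \{x \mid A^{(i)} x \leq b^{(i)}\}$ and keeping only the rows tight at $\alpha$ shows that $K_i$ is a closed polyhedral cone, and the $s$-dimensionality of $T_i$ makes $K_i$ full-dimensional. The key observation is that since $\alpha$ is in the relative interior of $T$, for every $v \in V$ both $\alpha \pm \epsilon v$ lie in $T \subseteq T_i$ for all sufficiently small $\epsilon > 0$, so $\pm v \in K_i$; combined with the convexity of $K_i$ this upgrades to $V \subseteq K_i$ and hence $K_i + V = K_i$ for every $i$.

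Let $\pi : \Rr^s \to \Rr^s/V \cong \Rr^{s-r}$ be the quotient projection. The saturation relation $K_i = \pi^{-1}(\pi(K_i))$ implies that $\pi(K_i)$ is a closed convex cone of full dimension $s-r$ in $\Rr^{s-r}$, and these cones are decreasing in $i$. Intersecting with the unit sphere $S^{s-r-1}$ yields a nested family of nonempty compact subsets, which by compactness has nonempty intersection; pick a unit vector $\bm{f}$ in it and any lift $\bm{e} \in \Rr^s$ with $\pi(\bm{e}) = \bm{f}$. Then $\bm{e} \in \bigcap_i K_i$ while $\bm{e} \notin V$, so for each $i$ there exists $\lambda_i > 0$ with $\alpha + \lambda_i \bm{e} \in T_i$; since $\bm{e} \notin V$ this point lies outside $\operatorname{aff}(T)$ and hence outside $T$. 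Thus $R \coloneqq \{\alpha + \lambda \bm{e} \mid \lambda \geq 0\}$ satisfies $R \cap T_i \not\subseteq T$ for every $i$, as required.

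The main subtlety is ensuring that the quotient cones $\pi(K_i)$ do not degenerate to $\{0\}$ in the limit; full-dimensionality of each $\pi(K_i)$ together with sphere compactness prevents this. The ability to pass to the quotient at all hinges on the inclusion $V \subseteq K_i$, which is exactly why $\alpha$ must be chosen in the relative interior of $T$: a less careful choice (for instance, a vertex of $T$, as illustrated by $T = [(0,0),(1,0)]$ with $T_i$ the thin triangle on vertices $(0,0),(1,\pm 1/i)$ and $\alpha = (0,0)$) would allow $\bigcap_i K_i \subseteq V$ and destroy the argument.
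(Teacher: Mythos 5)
Your proof is correct and follows essentially the same route as the paper: both arguments choose $\alpha$ in the relative interior of $T$, pass to the quotient $\Rr^s/\operatorname{aff}(T)$ (your $\pi(K_i)$ coincides with the paper's cones $C_{\pi(\alpha)}(\pi(T_i))$), extract a common direction from the nested compact cone sections, and use convexity plus the relative-interior choice of $\alpha$ to lift that direction back to a point of $T_i$ on the ray outside $T$. The only differences are cosmetic (unit sphere versus a transversal hyperplane section, and feasible-direction cones upstairs versus an explicit convex combination downstairs).
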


\begin{proof}
Possibly taking linear transformations, we may assume that $T\subseteq [-1,1]^r\times \bm{0}_{s-r}$. If $r=0$, then $T$ is a point $T = \bm{0}_{s}$. If $r>0$, then we may further assume that $(\bm{0}_{r},\bm{0}_{s-r})$ is a relative interior point of $T$. 

\medskip

Let $\pi: \Rr^s\to\Rr^{s-r}$ be the projection $(x_1,\ldots,x_s)\mapsto(x_{r+1},\ldots,x_s)$. Then $\pi(T)=\bm{0}_{s-r}\in\Rr^{s-r}$, and $\{\pi(T_i)\}_{i\in\Nn}$ is a decreasing sequence of compact polytopes in $\Rr^{s-r}$. Let $C_{\bm{0}_{s-r}}(\pi(T_i)) \subseteq \Rr^{s-r}$ be the cone generated by $\pi(T_i)$ with vertex $\bm{0}_{s-r}$. 

\medskip

Let $H$ be a hyperplane in $\Rr^{s-r}$, such that $\bm{0}_{s-r}\notin H$ and $H$ is not parallel to any face of $\pi(T_i)$. Then $C_{\bm{0}_{s-r}}(\pi(T_i))\cap H \neq \emptyset$ is compact for any $i$. Since $\{C_{\bm{0}_{s-r}}(\pi(T_i))\cap H\}_{i\in\Nn}$ is a decreasing sequence, $\cap_{i=1}^{\infty} (C_{\bm{0}_{s-r}}(\pi(T_i))\cap H)$ is nonempty. Choose $z\in\cap_{i=1}^{\infty} (C_{\bm{0}_{s-r}}(\pi(T_i))\cap H)$, then for any $i$, there exists $c_i>0$ such that $c_iz\in\pi(T_i)$. Hence there exists $y_i\in\Rr^r$, such that $(y_i, c_iz)\in T_i$ (when $r=0$, this is just $c_iz \in T$). Note that by assumption, there exists $\epsilon_i>0$, such that $(-\epsilon_i y_i,\bm{0}_{s-r})\in T\subseteq T_i$. Since $T_i$ is convex, $$(\bm{0}_r,\frac{\epsilon_ic_i}{\epsilon_i+1}z) = \frac{\epsilon_i}{1+\epsilon_i}(y_i,c_iz) + \frac{1}{1+\epsilon_i}(-\epsilon_i y_i,\bm{0}_{s-r})\in T_i.$$ Now as $(\bm{0}_r,\frac{\epsilon_ic_i}{\epsilon_i+1}z) \not\in T$, the ray $R=\{\bm{0}_s+\lambda z \mid \lambda\ge0\}$ satisfies the requirement.
\end{proof}

\begin{definition}\label{def: unstable point}
	A point $\beta \in \Rr^s$ is called an \emph{unstable point} for a sequence of polytopes $\{P_i\}_{i\in \Nn}$, if for any open set $U \ni\beta$, the set $\{P_i\cap U\mid i\in \Nn\}$ contains infinitely many elements. Otherwise, $\beta$ is a \emph{stable point} for the sequence.
\end{definition}

\begin{lemma}\label{le: DCC implies unstable point}
	Suppose that $\{P_i\}_{i\in\Nn}$ is a strictly increasing sequence of LCT-polytopes satisfying the conditions in Theorem \ref{thm: ACC for LCT-polytopes}. Then there exists at least one unstable point $\beta \in P_i$ for $i \gg 1$.
\end{lemma}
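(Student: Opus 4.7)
The plan is to exploit the tension between strict monotonicity of $\{P_i\}$ and the compactness of its union, using Lemma \ref{le: compactness of union}(2). Since the coefficients of all $\Delta_i$ and $D_{i,j}$ lie in a DCC set and the dimension is fixed, that lemma applies and $Q \coloneqq \cup_{i \geq 1} P_i$ is a compact subset of $\Rr_{\geq 0}^s$. In particular $Q$ is closed, which is the ingredient that will let a limit point land back inside some $P_i$.

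First I would, for each $i$, pick a witness point $\alpha_i \in P_{i+1} \setminus P_i$; this is possible because the containment is strict. Since $\{\alpha_i\} \subseteq Q$ and $Q$ is compact, a subsequence $\alpha_{i_k}$ converges to some $\beta \in Q$. Because $Q = \cup_i P_i$ is closed and equal to its union, $\beta \in P_{i_0}$ for some $i_0$, and since $\{P_i\}$ is increasing we get $\beta \in P_i$ for every $i \geq i_0$, which is the first conclusion. For the unstability, fix any open neighborhood $U \ni \beta$: for $k$ large enough $\alpha_{i_k} \in U$, and by construction $\alpha_{i_k} \in P_{i_k+1} \cap U$ while $\alpha_{i_k} \notin P_{i_k} \cap U$. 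Hence the sets $P_{i_k} \cap U$ and $P_{i_k+1} \cap U$ differ for infinitely many $k$, so $\{P_i \cap U \mid i \in \Nn\}$ contains infinitely many distinct elements, meaning $\beta$ is unstable in the sense of Definition \ref{def: unstable point}.

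The only nontrivial input is the closedness of $\cup_i P_i$. This is not automatic from compactness of each $P_i$: one could a priori imagine the sequence $\alpha_i$ converging to a point $\beta$ that does not lie in any $P_i$, which would wreck the conclusion $\beta \in P_i$ for $i \gg 1$. This is precisely why the ray-stabilization consequence of Theorem \ref{thm: ACC for lct}, encoded in Lemma \ref{le: compactness of union}(2), is needed: it guarantees that along every ray through the origin the polytopes stabilize, hence $Q$ is closed and a subsequential limit of $\alpha_i$ must actually belong to some $P_{i_0}$. Modulo citing that lemma, the argument is essentially a one-line compactness extraction.
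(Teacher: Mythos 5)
Your proof is correct and rests on the same essential input as the paper's, namely the compactness of $\cup_{i\geq 1}P_i$ from Lemma \ref{le: compactness of union}(2); the paper argues by contradiction, covering the compact union by finitely many neighborhoods on which the sequence $\{P_i\cap U_x\}$ stabilizes, whereas you argue directly by extracting a convergent subsequence of witness points $\alpha_i\in P_{i+1}\setminus P_i$ and checking that the limit is unstable. These are just the open-cover and sequential formulations of the same compactness argument, so the two proofs are essentially equivalent.
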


\begin{proof}
	 If there is no unstable point on $\cup P_i$, then every point $x\in \cup P_i$ has a neighborhood $U_x$ such that $\{P_i\cap U_x \mid i\in\Nn\}$ has finitely many elements. By Lemma \ref{le: compactness of union}, $\cup_{i\geq 1} P_i$ is compact. Thus there exists a finite subcover of $\{U_x\mid x\in\cup P_i\}$, which implies that $\{P_i\}_{i\in\Nn}$ is not strictly increasing, a contradiction.
\end{proof}

\begin{lemma}\label{lem: lctfacecone}
	Let $s\ge2$, and let $P=P(X,\Delta; D_{1},\ldots,D_{s})$ be an LCT-polytope of dimension $s$ in $\Rr^s$. Suppose that $\beta=(b_1,\ldots,b_s)$ is a point on the boundary of $P$. Let $\pi:\Rr^s\to \Rr^{s-1}$ be the projection  $(x_1,\ldots,x_s)\mapsto (x_1,\ldots,x_{s-1})$, and
	\[
	\BB:=\{F \mid \beta\in F\text{ is a facet of }P \text{~and~}F\nsubseteq \{(x_1,\ldots,x_s) \mid x_s=0\}\}.
	\] If $\dim C_{\pi(\beta)}(\pi(\BB))=s-1$, then $C_{\pi(\beta)}(\pi(\BB))=C_{\pi(\beta)}(\pi(P))$, where $\pi(\BB) \coloneqq \cup_{F\in \BB}\pi(F)$.
\end{lemma}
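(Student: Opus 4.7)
My plan is to prove the non-trivial inclusion $C_{\pi(\beta)}(\pi(P)) \subseteq C_{\pi(\beta)}(\pi(\BB))$; the reverse is immediate from $\pi(\BB) \subseteq \pi(P)$. The key structural observation I will use is that, since $P$ is an LCT-polytope and log canonicity is preserved when any non-negative coefficient is decreased, $P$ is downward closed in each coordinate direction. This implies that the only facet of $P$ whose outward normal has strictly negative $s$-th coordinate is $F_0 \coloneqq P \cap \{x_s = 0\}$. Consequently every $F \in \BB$ has outward normal with non-negative $s$-th entry: it is either \emph{vertical} (with $a_{F,s}=0$ and $\dim\pi(F)=s-2$) or \emph{upper} (with $a_{F,s}>0$ and $\dim\pi(F)=s-1$). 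The hypothesis $\dim C_{\pi(\beta)}(\pi(\BB)) = s-1$ then forces $\BB$ to contain at least one upper facet.

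Given $y \in C_{\pi(\beta)}(\pi(P))$, I will write $y = \pi(\beta) + w$ with $w = t_0(q-\pi(\beta))$ for some $q \in \pi(P)$ and $t_0 \geq 0$; lifting $q$ to a point of $P$ produces some $c^* \in \Rr$ with $(w, c^*)$ in the translated tangent cone $\tilde C \coloneqq C_\beta(P) - \beta = \bigcap_{F_i \ni \beta}\{x : a_i \cdot x \leq 0\}$. Decomposing each outward normal as $a_i = (\bar a_i, a_{i,s})$ with $\bar a_i \in \Rr^{s-1}$, I set
\[
c_+ \coloneqq \min\Bigl\{\, -\bar a_i \cdot w / a_{i,s} \;:\; F_i \in \BB,\ a_{i,s} > 0 \,\Bigr\},
\]
which is finite by the preceding paragraph. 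I will then verify that $(w, c_+) \in \tilde C$: the upper constraints hold by the choice of $c_+$, with equality at some $F_{i^*} \in \BB$; the vertical constraints are independent of $c$ and already hold for $(w, c^*)$; and the only possible lower constraint, coming from $F_0$ when $\beta \in F_0$, reads $c \geq 0$ and holds because $c_+ \geq c^* \geq 0$ in that case. Hence $(w, c_+)$ lies in the face $C_\beta(F_{i^*}) - \beta$ of $\tilde C$.

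Projecting then yields $y = \pi(\beta) + w \in C_{\pi(\beta)}(\pi(F_{i^*})) \subseteq C_{\pi(\beta)}(\pi(\BB))$, as required. The main obstacle I anticipate is ensuring that the active constraint at $c = c_+$ really comes from a facet in $\BB$ rather than from the excluded facet $F_0$; this is precisely what the downward-closedness of the LCT-polytope $P$ guarantees, by ruling out lower facets other than $F_0$ and thereby converting the dimension hypothesis into the existence of an upper facet in $\BB$ that keeps $c_+$ finite.
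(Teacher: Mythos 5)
Your proof is correct, and it takes a genuinely different route from the paper's. The paper argues by contradiction: assuming some $\alpha'\in\pi(P)\setminus C_{\pi(\beta)}(\pi(\BB))$, it first proves the global covering statement $\pi(\BB')=\pi(P)$ (where $\BB'$ is the set of \emph{all} facets not contained in $\{x_s=0\}$) via a convexity argument on the fibers of $\pi$, then uses the segment from $\pi(\beta)$ to $\alpha'$ to produce a facet $F'\notin\BB$ and a point $\beta'\in P$ with $\pi(\beta')=\pi(\beta)$ and $b_s'>b_s$; feeding $\beta'$ into the sign constraints of Lemma \ref{lem: LCTfacet} forces every $F\in\BB$ to be parallel to the $x_s$-axis, contradicting $\dim C_{\pi(\beta)}(\pi(\BB))=s-1$. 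You instead give a direct, purely local argument at $\beta$: using the same sign structure from Lemma \ref{lem: LCTfacet} (equivalently, downward-closedness of $P$), you classify the facets through $\beta$ into the horizontal facet $F_0$, vertical facets, and upper facets, observe that the dimension hypothesis forces $\BB$ to contain an upper facet, and then shoot the lifted direction $(w,c)$ upward inside the tangent cone $C_\beta(P)-\beta$ until an upper constraint becomes active --- necessarily one coming from a facet in $\BB$, since $F_0$ only bounds $c$ from below and vertical constraints are $c$-independent. Both proofs hinge on the same two ingredients (non-negative outward normals away from the coordinate hyperplanes, and the fact that vertical facets project to dimension $s-2$), but your version replaces the paper's global covering claim plus contradiction with a tangent-cone computation that exhibits, for each $y\in C_{\pi(\beta)}(\pi(P))$, an explicit facet $F_{i^*}\in\BB$ with $y\in C_{\pi(\beta)}(\pi(F_{i^*}))$; this is slightly more constructive, at the cost of invoking the standard identification of $C_\beta(P)$ with the intersection of the half-spaces active at $\beta$ (valid here since $\dim P=s$). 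All the delicate points --- finiteness of $c_+$, the inequality $c_+\ge c^*\ge 0$ when $\beta\in F_0$, and the fact that every facet through $\beta$ with positive $s$-th normal component lies in $\BB$ --- are handled correctly.
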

\begin{proof}
	By definition, $\pi(\mathfrak{B})\subseteq \pi(P)$. Suppose to the contrary that there exists a point $\alpha'\in \pi(P)\setminus C_{\pi(\beta)}(\pi(\BB))$. Let $L$ be the line segment with endpoints $\alpha'$ and $\pi(\beta)$. Then $L\subseteq \pi(P)$ and $L\cap C_{\pi(\beta)}(\pi(\BB))=\{\pi(\beta)\}$. 
	Let \begin{equation*}
	\BB'=\{F \mid F\text{ is a facet of }P \text{ and }F\nsubseteq \{(x_1,\ldots,x_s) \mid x_s=0\}\}. 
	\end{equation*} 
We claim that $\pi(\BB')=\pi(P)$. Let $F_s$ be the facet of $P$, such that $F_s\subseteq\{(x_1,\ldots,x_s) \mid x_s=0\}$. It suffices to show that $\pi(F_s)\subseteq \pi(\BB')$. Since the dimension of $P$ is equal to $s$, there exist $\gamma=(\alpha_1,\ldots,\alpha_{s-1})$ and $\alpha_s>0$, such that $(\gamma,\alpha_s)\in P$. 
For any $\gamma'\in\pi(P)$, if $(\gamma',0)$ belongs to the boundary of $F_s$, then there exists another facet $F_s'$ of $P$, such that $(\gamma',0)\in F_s'$, thus $\gamma'\in\pi(F_s')\subseteq \pi(\BB')$. If $(\gamma',0)$ is a relative interior point of $F_s$, then there exists $\epsilon>0$, such that $((1+\epsilon)\gamma'-\epsilon\gamma,0)\in F_s$. By the convexity of $P$, $$(\gamma',\frac{\epsilon\alpha_s}{1+\epsilon})=\frac{\epsilon}{1+\epsilon}(\gamma,\alpha_s)+\frac{1}{1+\epsilon}((1+\epsilon)\gamma'-\epsilon\gamma,0)\in P.$$
Thus $\alpha_s'\coloneqq\sup\{t \mid (\gamma',t)\in P\}\ge \frac{\epsilon\alpha_s}{1+\epsilon}>0$, and $(\gamma',\alpha_s')\in F_s'\neq F_s$ for some facet $F_s'$ of $P$. Hence $\gamma'\in \pi(\BB')$. The claim is proved. In particular, $L\subseteq \pi(\BB')$. Since $\pi(\BB')$ is a union of finitely many closed convex subsets $\pi(F)$, there exists a facet $F'\in \BB'$, such that $\{\pi(\beta)\}\subsetneq \pi(F')\cap L$. Since $C_{\pi(\beta)}(\pi(\BB))\cap L=\{\pi(\beta)\}$, $F'\notin\BB$. Let $\beta'=(b_1,\ldots,b_{s-1},b_s')\in F',$ and $\beta'\neq\beta$. Let $H'$ be the hyperplane containing $F'$. Since $\beta\notin F'$, $H'$ is not parallel to $x_s$-axis. By Lemma \ref{lem: LCTfacet}, the connected component of $\Rr^s\backslash H'$ containing $P$ is $\{(x_1,\ldots,x_s)\mid a_1'x_1+\cdots+a_s'x_s\le a_0'\},$ where $a_i'\ge0$ and $\sum_{i=1}^{s-1} a_i'b_i+a_s'b_s'=a_0'$. Since $\beta\in P$ and $\beta\notin F'$, $a_s'>0$ and
    $\sum_{i=1}^{s} a_i'b_i< a_0'$. Thus $b_s'>b_s$. 
    
    \medskip
    
For any $F\in \BB$, let $H$ be the hyperplane containing $F$. By Lemma \ref{lem: LCTfacet}, the connected component of $\Rr^s\backslash H$ which contains $P$ is given by either $\{(x_1,\ldots,x_s)\mid x_i\ge 0\}$ for some $i\neq s$, or $\{(x_1,\ldots,x_s)\mid a_1x_1+\cdots+a_nx_n\le a_0\}$ with $a_i\ge0$. In the first case, $F$ is parallel to the $x_s$-axis. In the latter case, since $\beta, \beta' \in P \subseteq \{(x_1,\ldots,x_s)\mid a_1x_1+\cdots+a_sx_s\le a_0\}$, $\beta \in F$ and $b_s'>b_s$, we have $\sum_{i=1}^{s-1}a_ib_i+a_sb_s = a_0$, and $\sum_{i=1}^{s-1}a_ib_i+a_sb_s'\le a_0$. Thus $a_s=0$, and $F$ is also parallel to the $x_s$-axis. Hence $\dim \pi(F) = \dim F -1 =s-2$, and $\dim C_{\pi(\beta)}(\pi(\BB))=s-2$, a contradiction. 
\end{proof}

\begin{proof}[Proof of Theorem \ref{thm: ACC for LCT-polytopes}]
	\noindent We prove the theorem by induction on $s$. If $s=1$, then the theorem follows from the ACC for log canonical thresholds (Theorem \ref{thm: ACC for lct}).
	
	\medskip
	
	Now assume that $s>1$ and Theorem \ref{thm: ACC for LCT-polytopes} holds for less than $s$ testing divisors. Suppose to the contrary that there exists a strictly increasing sequence of LCT-polytopes $\{P_i\coloneqq P(X_i,\Delta_i; D_{i,1},\ldots,D_{i,s})\}_{i=1}^{\infty}$. When $\dim P_i\le s-1$ for any $i$, possibly reordering the coordinates of $\Rr^s$, we may assume that $P_i\subseteq\{(x_1,\ldots,x_s)\mid x_s=0\}$. In fact, if $(a_1,\ldots, a_s) \in P_i$ with $a_j>0$ for all $1\leq j\leq s$, then $[0,a_1] \times \cdots \times [0,a_s] \subseteq P_i$, and thus $\dim P_i=s$. Hence, $\{P_i=P(X_i,\Delta_i; D_{i,1},\ldots,D_{i,s-1})\}_{i=1}^{\infty}$ is a strictly increasing sequence of LCT-polytopes with $s-1$ testing divisors. This contradicts the induction hypothesis. Hence, possibly passing to a subsequence of $\{P_i\}$, we may assume that $\dim P_i=s$ for any $i$. By Lemma \ref{le: DCC implies unstable point}, the sequence $\{P_i\}$ has an unstable point $\beta=(b_1,\ldots,b_s)$. Possibly passing to a subsequence of $\{P_i\}$, we may assume that $\beta$ lies on the boundary of $P_i$.
    
\medskip
	
For a facet $F_i' \subseteq P_i$, let $H_i'\subseteq \Rr^s$ be the hyperplane containing $F_i'$. Since $\beta$ is an unstable point, possibly passing to a subsequence of $\{P_i\}$, there exists a facet $\beta \in F_i'$ of $P_i$, such that $H_i'\notin\{H_1',\ldots,H_{i-1}'\}$ for each $i$. 

\medskip 

Possibly passing to a subsequence of $\{P_i\}$ and reordering the coordinates, we may assume that for any $i$, $H_i'$ is not parallel to the $x_s$-axis and $H_i'\neq \{(x_1,\ldots,x_s)\mid x_s=0\}$. Thus the hyperplane $H_i'$ is defined by a linear equation $x_s=t_i'(x_1,\ldots,x_{s-1})$, where $t_i'$ is a nonzero linear function of $x_1,\ldots,x_{s-1}$. 
	
\medskip
	
Let $\pi:\Rr^s\to \Rr^{s-1}$ be the projection $(x_1,\ldots,x_s)\mapsto (x_1,\ldots,x_{s-1})$, and  $\BB_i=\{F \mid \beta\in F\text{ is a facet of }P_i \text{ and }F\nsubseteq \{(x_1,\ldots,x_s) \mid x_s=0\}\}$. By construction, $F_i'\in\BB_i$, so $\BB_i \neq \emptyset$ and $\dim \pi (\BB_i)=s-1$. Consider the following two cases.

\medskip	

\noindent {\bf Case 1.} There exists an index $j_0$ and infinitely many $i$, such that $\pi(\BB_{j_0})\cap \pi(\Int(F_i'))\neq\emptyset$.
	
	\medskip
	
Passing to a subsequence of $\{P_i\}$, we may assume that $j_0=1$ and $\pi(\BB_{1})\cap \pi(\Int(F_i'))\neq\emptyset$ for any $i$. Since $\dim \pi(\BB_{1}) = s-1$, there is a facet $F_1\in\BB_1$ such that $T_1\cap \pi(F_i')$ is a polytope of dimension $s-1$ for any $i$, where $T_1:=\pi(F_{1})$. Then $\{\pi^{-1}(T_1)\cap P_i\}_{i=1}^{\infty}$ is a strictly increasing sequence of polytopes.

	\medskip
	
	We prove by induction that for any $j\geq 1$, possibly passing to a subsequence of $\{P_i\}$, there exists a facet $F_j\in \BB_j$, such that $T_j\cap \pi(F_i')$ is a polytope of dimension $s-1$ for any $i\ge j$, where $T_j \coloneqq \cap_{i=1}^{j}\pi(F_j)$. In particular, $\{\pi^{-1}(T_j)\cap P_i\}_{i=j}^{\infty}$ is a strictly increasing sequence of closed polytopes. 
	
	\medskip
	
	The case where $j=1$ has been proved. When $j\ge2$, by induction, $C_{\pi(\beta)}(T_{j-1})\cap \Int(\pi(F_i'))$ is of dimension $s-1$ for any $i\ge j-1$. By Lemma \ref{lem: lctfacecone}, 
	\[
	\begin{aligned}
	C_{\pi(\beta)}(T_{j-1})\subseteq C_{\pi(\beta)}(\pi(\BB_{j-1}))&=C_{\pi(\beta)}(\pi(P_{j-1}))\\
	&\subseteq C_{\pi(\beta)}(\pi(P_j))=C_{\pi(\beta)}(\pi(\BB_j)).
	\end{aligned}
	\] 
	Thus passing to a subsequence of $\{P_i\}_{i\geq j}$, there exists a facet $F_j\in \BB_j$, such that $C_{\pi(\beta)}(\pi(F_j))\cap C_{\pi(\beta)}(T_{j-1})\cap \Int(\pi(F_i'))$ is of dimension $s-1$ for any $i \geq j$. Hence $T_j\cap \Int(\pi(F_i'))=\pi(F_j)\cap T_{j-1}\cap\Int(\pi(F_i'))\neq\emptyset$, and $T_j\cap \pi(F_i')$ is a polytope of dimension $s-1$ for any $i\ge j$. This finishes the induction. 
	
	\medskip

	Let $H_j$ be the hyperplane containing $F_j$, and let 
	\begin{equation*}\label{eq: tj}
	x_s=t_j(x_1,\ldots,x_{s-1})
	\end{equation*} 
	be the equation of $H_j$. Then $t_j(x_1,\ldots,x_{s-1})\ge t_{j-1}(x_1,\ldots,x_{s-1})$ for any $(x_1,\ldots,x_{s-1})\in T_j$. Let $T\coloneqq\cap_{i=1}^{\infty} T_i$, then $\pi(\beta)\in T$ and $T$ is a closed convex set. 	
	
	\medskip
  
	If $\dim T=s-1$, then choose $\alpha_1,\ldots,\alpha_s\in T$, such that $\alpha_1-\alpha_s,\ldots,\alpha_{s-1}-\alpha_s$ are $\Rr$-linearly independent. For $(x_1,\ldots,x_{s-1})\in \pi(P_i)$, set 
	\begin{equation*}\label{eq: ts}
	t_i^{s}(x_1,\ldots,x_{s-1})\coloneqq\sup\{x_s \mid (x_1,\ldots,x_s)\in P_i\}.
	\end{equation*} 
	
	For any $1\le k\le s$ and $i \geq 1$, $t_i(\alpha_k)=t_i^{s}(\alpha_k)$. Since for any fixed $1 \leq k \leq s$, $\{t_i^s(\alpha_k)\}$ is increasing with respect to $i$, by Theorem \ref{thm: ACC for lct}, passing to a subsequence of $\{P_i\}$, we may assume that $t_2^{s}(\alpha_k)=t_1^{s}(\alpha_k)$ for any $1\le k\le s$. Thus $t_2(\alpha_k)=t_1(\alpha_k)$ for any $1\le k\le s$. Since $t_j(x_1,\ldots,x_{s-1})$ are linear functions of $s-1$ variables, $t_2(x_1,\ldots,x_{s-1})\equiv t_1(x_1,\ldots,x_{s-1})$. Recall that $P_2$ is an LCT-polytope, $t_2^{s}(\alpha')\le t_2(\alpha')$ for any $\alpha'\in T_1 \subseteq \pi(P_2)$ with the equality holds if and only if $\alpha'\in \pi(F_2)$. Since $P_1\subseteq P_2$ and $T_1=\pi(F_1)$,
	\[
	t_1(\alpha')=t_1^{s}(\alpha')\le t_2^{s}(\alpha')\le t_2(\alpha')=t_1(\alpha').
	\]
	So all the equalities hold. In particular, $\alpha'\in \pi(F_2)$ and $T_1\subseteq \pi(F_2)$. This implies that $\pi^{-1}(T_{1})\cap P_1=\pi^{-1}(T_{1})\cap P_2$, which is a contradiction since $\{\pi^{-1}(T_1)\cap P_i\}_{i=1}^{\infty}$ is strictly increasing.
	
	\medskip

	If $\dim T\le s-2$, then by Lemma \ref{lem:commona ray}, there exists a point $\alpha\in T$, and a ray $R=\{\alpha+\lambda\bm{e} \mid \lambda\ge0\}$, such that $R\cap T_i\nsubseteq T$ for any $i$. 
	Let 
	\[
	t_i|_{R}(\lambda)\coloneqq t_i(\alpha+\lambda\bm{e}).
	\]
	By Theorem \ref{thm: ACC for Local LCT-polytopes}, $t_i|_{R}(\lambda)$ belongs to a finite set. Possibly passing to a subsequence, we may assume that 
	$t_i|_{R}(\lambda)=t_1|_{R}(\lambda)$ for any $i$. We will show that
	$R\cap T_1\subseteq\pi(F_i)$ for any $i$. Let $\alpha'=\alpha+\lambda'\bm{e}$ be any point on $R \cap T_1$, and 
	$b_{i,s}':=t_i^{s}(\alpha')$. Since $P_i$ is an LCT-polytope, 
	\[
	b'_{i,s}\le t_i|_{R}(\lambda')=t_1|_{R}(\lambda'),
	\]
	with the first equality holds if and only if $(\alpha',b_{i,s}')\in F_i$. Recall that $T_1=\pi(F_1)$ and $\{P_i\}$ is increasing, we have $t_1^{s}(x_1, \ldots, x_{s-1})=t_1(x_1, \ldots, x_{s-1})$ on $T_1$, and $t_1|_{R}(\lambda')=t_1^s(\alpha')\le t_i^s(\alpha') =b_{i,s}'$. Thus $b_{i,s}'=t_i|_{R}(\lambda')$ and $\alpha'\in \pi(F_i)$. Hence $R \cap T_1 \subseteq \pi(F_i)$ for any $i$. Therefore, 
	$$R\cap T_1\subset \cap_{i=1}^{\infty}\pi(F_i)=\cap_{i=1}^{\infty}T_i=T,$$
	which contradicts the choice of $R$.

\medskip	
	
\noindent {\bf Case 2}. Now assume that for each $j$, $\pi(\BB_j) \cap \pi(\Int(F_i')) = \emptyset$ for any $i\gg 1$. Possibly passing to a subsequence, we may assume that  $\pi(\BB_{i-1})\cap \pi(\Int(F'_{i}))=\emptyset$ for any $i\ge 2$. Thus $C_{\pi(\beta)}(\pi(\BB_{i-1}))\cap C_{\pi(\beta)}(\Int(\pi(F'_{i})))=\{\pi(\beta)\}$.
        
        \medskip
        
Let $y_{i}\in \Int(\pi(F'_{i}))\setminus C_{\pi(\beta)}(\pi(\BB_{i-1}))$. Recall that $\dim \pi(\BB_{i-1})=s-1$, by Lemma \ref{lem: lctfacecone}, 
\[
    C_{\pi(\beta)}(y_i)\cap C_{\pi(\beta)}(\pi(P_{i-1}))=C_{\pi(\beta)}(y_i)\cap C_{\pi(\beta)}(\pi(\BB_{i-1}))=\{\pi(\beta)\}.
\] 

Since $y_i\in\pi(F'_{i})$, there exits $z_i \geq 0$ such that $(y_i,z_i)\in P_i$. Recall that $b_s$ is the $x_s$-coordinate of $\beta$. Possibly replacing $(y_i,z_i)$ with an interior point on the line segment connecting $\beta$ and $(y_i,z_i)$ which is close enough to $\beta$, we may assume $z_i\ge \frac{b_s}{2}$. Now $(y_i,\frac{b_s}{2})\in P_i\setminus P_{i-1}$ since $y_i \not\in \pi(\BB_{i-1}) = \pi(P_{i-1})$. Hence
\[
    P_i\cap \{(x_1,\ldots,x_s) \mid x_s=\frac{b_s}{2}\}=P(X_i,\Delta_i+\frac{b_s}{2}D_{i,s}; D_{i,1},\ldots,D_{i,s-1})
\]
are strictly increasing LCT-polytopes with $s-1$ testing divisors, a contradiction. 		
\end{proof}

	The argument below is parallel to that of \cite[Corollary 1.10]{HMX14}.

\begin{proof}[Proof of Corollary \ref{cor: Fano spectrum}]
	Suppose that $\mathcal{R}$ contains an increasing sequence $r_1\le r_2\le \cdots$, such that $-(K_{X_i}+\Delta_i(a_{X_i}))\sim_{\Rr} r_iH_i$ for some Cartier divisor $H_i$ and $(X_i,\Delta_i(t),a_{X_i})\in \mathcal{D}$. By the cone theorem (see \cite[Theorem 18.2]{Fujino11}), there exists a curve $C_i$ such that $-(K_{X_i}+\Delta_i(a_{X_i}))\cdot C_i\le 2n$. In particular, $r_i\le 2n$. By the effective base point free theorem for log canonical pairs (see \cite[Theorem 1.1]{Fujino09}), there is a universal $m \in \Nn$ such that the linear system $|mH_i|$ is base point free. Possibly replacing $m$ by a multiple, we may assume $m>2n$. Pick a general divisor $D_i\in|mH_i|$, then
	\begin{equation*}
		(X_i,\Gamma_i(t))\coloneqq(X_i,\Delta_i(t)+\frac{r_i}{m}\cdot\frac{b-t}{b-a_{X_i}}D_i)
	\end{equation*} 
	is log canonical for $t\in[a_{X_i},b]$. Moreover, as $K_{X_i}+\Gamma_i(b) \sim_{\Rr} K_{X_i}+\Gamma_i(a_{X_i}) \sim_{\Rr} 0$, by linearity,
	\begin{equation*}
		K_{X_i}+\Gamma_i(t)\sim_{\Rr} 0\text{ for }t\in[a, b].
	\end{equation*}

	As $a_{X_i} \in \mathcal B$ and $\mathcal B$ is DCC, $\{\frac{b-a}{b-a_{X_i}}\}_{i\in\Nn}$ is DCC as well. Since $\{r_i\}_{i\in\Nn}$ is increasing, the coefficients of $\{\Gamma_i(a)\}_{i\in\Nn}$ and $\{\Gamma_i(b)\}_{i\in\Nn}$ belong to a DCC set. By Theorem \ref{thm: linear global ACC}, $\{\frac{r_i}{m}\cdot\frac{b-t}{b-a_{X_i}}\}_{i\in\Nn}$ is a finite set. By the DCC property of $\{\frac{1}{b-a_{X_i}}\}$ again, $\{r_i\}_{i\in\Nn}$ cannot be strictly increasing.
\end{proof}

\subsection{Concluding remarks}\label{subsection: concluding remarks}
	We discuss several observations from the proofs and propose some related questions.
	
\medskip

	\cite[Definition 4.1]{BZ16} introduced generalized polarized pairs and established the ACC property for such pairs. Our results are speculated to hold in that setting as well.

\medskip

	Since our method is local, we are unable to deal with global problems. For example, it was shown that the accumulation points of log canonical thresholds belong to the set of log canonical thresholds of lower dimensional varieties (see \cite[Theorem 1.1]{MP04} and \cite[Theorem 1.11]{HMX14} for the precise statement). It is not known whether some similar property holds for LCT-polytopes, even for smooth varieties (an accumulation polytope is the limit of infinitely many \emph{different} LCT-polytopes). However, \cite[Theorem 3.3]{LM11} showed that in the smooth case, a sequence of LCT-polytopes converges to an LCT-polytope with respect to the Hausdorff metric.

\medskip

	One potential application of LCT-polytopes might be on the problems related to the existence of K\"ahler-Einstein metrics. Traditionally, the so-called $\alpha$-invariant was introduced by Tian to deal with the existence of K\"ahler-Einstein metrics (see \cite{Tian87}). For a given $\Qq$-Fano variety $X$, the alpha invariant $\alpha(X)$ was shown be $\inf\{\lct(X;D)\mid 0\le D\sim_{\Qq}-K_X\}$ (see \cite[Theorem A.3]{CS08}). More recently, log canonical thresholds also appeared in the study of stabilities of varieties (see \cite{Fuj15, Fujita16} and references therein). It is expected that LCT-polytopes could give some refined description for the existence of K\"ahler-Einstein metrics.

\medskip

	Finally, one can also generalize other invariants to multiple divisors. In \cite{HL20,HanLi20accumulation}, we study the generalization of pseudo-effective thresholds. Under some conditions, we prove the generalized Fujita's spectrum conjecture and their ACC property (see \cite{Fuj92, Fuj96, DiC16, DiC17}, etc.). The $\Rr$-complementary thresholds is a generalization of log canonical thresholds, and it satisfies the ACC as well \cite{HLS19}. It is an interesting question that whether $\Rr$-complementary thresholds polytopes satisfy the ACC or not.

\newcommand{\etalchar}[1]{$^{#1}$}

\end{document}